\newtheorem{theo}{Theorem}
\newtheorem{lem}[theo]{Lemma}
\newtheorem{cor}[theo]{Corollary}
\newtheorem{prop}[theo]{Proposition}
\newtheorem{defn}[theo]{Definition}
\def\R{\R}
\def\R{{\mathbb R}}
\def\qed{\hfill $\vcenter{\hrule height .3mm
\hbox {\vrule width .3mm height 2.1mm \kern 2mm \vrule width .3mm
height 2.1mm} \hrule height .3mm}$ \bigskip}
\def\lam{\lambda}
\def\to{\rightarrow}
\def\pmx{\begin{pmatrix}}
\def\emx{\end{pmatrix}}
\def\Hess{{\nabla^2}}
\def\det{{\rm det}}
\def\R{\mathbb R}
\titleformat{\section}{\normalfont\scshape\filcenter}{\thesection}{1em}{}
\titleformat{\subsection}[runin]{\normalfont\bfseries}{\thesubsection.}{1em}{}
\titleformat{\subsubsection}[runin]{\normalfont\bfseries}{\thesubsubsection.}{1em}{}
\begin{document}

\title{
Extremal affine surface areas in a functional setting
\footnote{Keywords: s-concave functions, (functional) affine surface areas. 2010 Mathematics Subject Classification:  52A20,  60B}}

\author{Stephanie Egler\thanks{Partially supported by  NSF grant DMS-2103482} and Elisabeth M.  Werner\thanks{Partially supported by  NSF grant DMS-2103482}}

\date{}

\maketitle
\begin{abstract}
We introduce extremal affine surface areas in a functional setting.
We show their main properties. Among them are linear invariance, isoperimetric inequalities and  monotonicity properties.  
We establish a new duality formula, which shows that 
the maximal (resp. minimal) inner affine surface area of an $s$-concave function on $\mathbb{R}^n$ equals the maximal  (resp. minimal) outer affine surface area of its Legendre polar.
We estimate the ``size" of these quantities:  up to a constant depending on $n$ and $s$ only,  the extremal affine surface areas  are proportional to a power of the integral of $f$.
This extends results obtained in the setting of convex bodies. We recover and improve those as a corollary to our results. 
\end{abstract}

\begin{spacing}{0}
{\footnotesize \tableofcontents}
\end{spacing}

\section{Introduction}
In the study of geometric inequalities  it is often important to find the proper positioning for the quantities under consideration.
Among such positions for convex bodies (compact, convex subsets of $\mathbb{R}^n$ with non-empty interior)  are  
the John-  and  the L\"owner-position.  The John-position concerns the ellipsoid of maximal volume contained in $K$, now 
called  the John ellipsoid of \(K\) after F. John who showed that  such an ellipsoid exists and is unique.
Dual to the John ellipsoid is the L\"owner ellipsoid, the ellipsoid of minimal volume containing $K$. 
These ellipsoids 
play  fundamental roles in asymptotic convex geometry.  They are  related to the isotropic position,  
to the study of volume concentration, volume ratio, reverse isoperimetric inequalities, Banach-Mazur distance of normed spaces, and many more.
We refer to e.g., the books \cite{ArtsteinGiannopoulosMilmanBook, BGVV14} for the details and more information.
\par
\noindent
In affine differential geometry, another extremal quantity  has been put forward:  Instead of asking for maximal resp. minimal volume ellipsoids, 
one asks for convex bodies $K_0$ contained in a given convex body $K$ with {\em maximal affine surface area}. 
The affine surface area  $as(K)$ of a convex body $K$ was introduced  by W. Blaschke \cite{Blaschke} in dimension $2$ and $3$ and for smooth enough bodies
and then extended to arbitrary convex bodies and  all dimensions by \cite{Leichtweiss1986, Lutwak96, SchuettWerner1990}. It is defined  as follows, 
\begin{equation*} \label{def:1affine}
as(K)=\int_{\partial K}\kappa(x)^{\frac{1}{n+1}} d\mu(x), 
\end{equation*}
where  $\kappa(x)$ is the (generalized) Gauss-Kronecker curvature at $x\in \partial K$, the boundary of $K$  and $\mu$ is the usual surface area measure on $\partial K$.
\par
\noindent
The question of finding the maximal affine surface area body  goes back to at least,  V. I. Arnold \cite{Arnold} and A. M. Vershyk
\cite{Vershik}. By compactness and upper semi continuity of affine surface area,  the supremum is attained  for some convex body $K_0 \subset K$.
However, only in dimension $2$ has the maximal affine surface area body $K_0$ been  determined exactly  by I. B\'ar\'any \cite{Barany}. Moreover, I. B\'ar\'any showed in \cite{Barany} that in the plane the extremal body $K_0$  is unique and that 
$K_0$ is the limit shape of lattice polygons contained in $K$.  
In  dimensions $3$ and higher, the maximal affine surface area body $K_0$ has not been determined.
\par
\noindent
Relatively tight estimates have been given in \cite{GiladiHuangSchuettWerner}  for the size of the maximal affine surface area, namely
it was shown  there that with  the isotropic constant $L_K$ (see subsection \ref{SubS:sinN} and (\ref{LK})) and an absolute constant $c$, 
\begin{equation}\label{Ausdruck1} 
\left(\frac{c}{L_K} \right)^\frac{2n}{n+1} \frac{1}{ n^{5/6}} \,  n \,  \text{vol}_n(B^n_2)^ \frac{2}{n+1} \leq \frac{\sup_{L\subseteq K} as (L)}{\text{vol}_n(K)^\frac{n-1}{n+1} }  \leq  n \,  \text{vol}_n(B^n_2)^ \frac{2}{n+1},
\end{equation}
where $\text{vol}_n(K)$ is the volume of $K$. Thus maximal  affine surface area 
is proportional to 
a power of $\text{vol}_n(K)$,  up to a constant  depending on $n$ only.
\par
\noindent
In \cite{GiladiHuangSchuettWerner},  the authors also show such estimates for  the  {\em extremal $L_p$-affine surface areas}. 
The $L_p$-affine surface areas were  introduced for $p>1$ by 
E. Lutwak  in his ground breaking paper  \cite{Lutwak96},  thus  initiating the far reaching $L_p$-Brunn-Minkowski theory.
Extensions to all real $p$ were given in  \cite{SchuettWerner2004}, (see also \cite{Hug, MeyerWerner2000}).  
For real  $p \neq -n$, the {\em $L_p$-affine surface areas} are 
\begin{equation} \label{def:paffine}
as_{p}(K)=\int_{\partial K}\frac{\kappa(x)^{\frac{p}{n+p}}}
{\langle x,N(x)\rangle ^{\frac{n(p-1)}{n+p}}} d\mu(x), 
\end{equation}
where $N(x)$ is the outer unit normal vector at $x$ to $\partial K$ and  $\langle
\cdot, \cdot \rangle$ is the standard inner product on $\R^n$, which induces the Euclidean norm $| \cdot|$.
The case $p=1$ is the above classical affine surface area. 
\par
\noindent
Due to its remarkable properties, $L_p$-affine surface area  is  important in many areas of mathematics and applications.
 We only quote characterizations 
of $L_p$-affine surface areas by  M. Ludwig and M. Reitzner \cite{LudwigReitzner2010}, 
the $L_p$-affine isoperimetric inequalities, proved by 
E. Lutwak \cite{Lutwak96} for $p>1$ and for all other $p$  in \cite{WernerYe2008}. The classical case $p=1$ goes back to W. Blaschke \cite{Blaschke}. 
These inequalities  are related to various other inequalities, e.g., 
E. Lutwak, D. Yang and G. Zhang \cite{Lutwak2000, Lutwak2002}.
In particular, the affine isoperimetric inequality implies  the Blaschke-Santal\'o inequality and it proved to be
the key ingredient in the solution of many problems, see e.g. the books by R. Gardner \cite{GardnerBook} and R. Schneider \cite{SchneiderBook} and also
\cite{Ludwig2010, LutwakOliker1995, SchusterWannerer2012, TW2, WernerYe2008}.
Extensions of the notion have been given to an Orlicz setting, e.g.,  \cite{GardnerHugWeil2014, HuangLutwakYangZhang, Ludwig2010,  Ye2015},  to a functional setting  \cite{CFGLSW, CaglarWerner2014} and to the spherical and 
hyperbolic setting \cite{BesauWerner2015, BesauWerner2016}.
\newline
Applications of affine surface areas have been manifold. For instance,
affine surface area appears in best and random approximation of convex bodies by polytopes, see, e.g.,  K. B\"or\"oczky
 \cite{Boeroetzky2000, Boeroetzky2000a}, P. Gruber \cite{Gruber1983, GruberHandbook}, M. Ludwig \cite{Ludwig1999},  M. Reitzner \cite{Reitzner2002, ReitznerSurvey} and also
 \cite{GroteWerner, GroteThaeleWerner, HoehnerSchuettWerner,  Schuett1991, SchuettWerner2003} 
and has connections to, e.g., concentration of volume,  \cite{FleuryGuedonPaouris, Ludwig2010, Lutwak2002}, differential equations \cite{BoeroetzkyLutwakYangZhang, HaberlSchuster2009, HuangLutwakYangZhang, TW2, TrudingerWang2008, Zhao2016}, and 
information theory, e.g.,  \cite{ArtKlarSchuWer, CaglarWerner2014, LutwakYangZhang2002, LutwakYangZhang2004, PaourisWerner2012, Werner2012}.
\par
\noindent
Much research has been devoted to extend notions from convex geometry to a functional setting. We only quote \cite{ArtKlarMil, ArtKlarSchuWer, ColesantiLudwigMussnig2017, ColesantiLudwigMussnig, FradeliziMeyer2007, FGSZ, KolesnikovWerner, LiSchuettWerner, Rotem2020}.
A natural analog to convex bodies  are $s$-concave functions. For instance, for such functions,  
John- and L\"owner ellipsoids  have been defined  \cite{Alonso-Gutiérrez2017, ivanov2020functional, ivanov2023functional,  LiSchuettWerner2019} and  
$L_p$ affine surface areas have been extended to a functional setting as well \cite{CFGLSW}.
\par
\noindent
In this paper, we introduce 
extremal affine surface areas for $s$-concave functions. We define the {\em inner and outer maximal affine surface areas} of an $s$-concave function $f$ by
\begin{equation*}
IS^{(s)}_\lam(f) =  \sup_{h \leq f} as^{(s)}_\lam(h),           \ \   \   OS^{(s)} _\lam(f)= \sup_{h \geq f}as^{(s)}_\lam(h).
\end{equation*}
Here,  for $\lam \in \mathbb{R}$, $as^{(s)}_\lam(h)$ is the $\lam$-affine surface area of $h$, see Definition \ref{def:s},  and  the suprema are taken over all sufficiently smooth $s$-concave functions $h$.  
For the precise definition we  refer to  Definition \ref{ex-asa}, where we also introduce the {\em inner and outer minimal $\lam$-affine surface areas}.
\par
\noindent
Our definitions are not the exact equivalent of the  ones for convex bodies. One consequence of this modified definition is  a new duality formula, that had not been observed in the convex body case,   between the inner maximal affine surface 
area of an $s$-concave function $f$ and the outer maximal affine  surface area of its Legendre dual function $f_{(s)}^\circ$, 
\begin{equation}\label{duality1}
IS^{(s)}_\lam(f) = OS^{(s)}_{1-\lam}(f_{(s)}^\circ),
\end{equation}
Such a duality formula also holds  for the inner and outer minimal $\lam$-affine surface areas, see Theorem \ref{basic}.
In Theorem \ref{basic} we also prove invariance properties, isoperimetric inequalities and  monotonicity properties in $s$ and in $\lam$ for the extremal affine surface areas. 
\par
\noindent
As in the case of convex bodies, we give relatively tight estimates for the size of the extremal functional affine surface areas. For instance, we show in Theorem \ref{prelprop},
that  with  the isotropic constant $L_f$ of $f$,  see (\ref{Lphi}),  and an absolute constant $c$, 
$$
\max\left\{\frac{c^{n_s }}{n_s^{5/6-\lam} L_f^{2n \lam}}, \frac{1}{n_s^{n_s(1-2\lam)} } \right\} \left( \int_{\mathbb{R}^n} g_e^{(s)}\right)^{2\lam}  \leq \frac{IS^{(s)}_\lam(f)}{\left( \int_{\mathbb{R}^n} f \right)^{1-2\lam}}  \leq \left( \int_{\mathbb{R}^n} g_e^{(s)}\right)^{2\lam}, 
$$
where $n_s=n+\frac{1}{s}$ and $g_e^{(s)}(x) = (1-s |x|^2)_+^\frac{1}{2s}$ is the analogue of the Euclidean ball. 
Since $$\int_{\mathbb{R}^n} g_e^{(s)} =  \frac{\pi^{\frac{n}{2}}}{s^\frac{n}{2}}  \, 
\frac{\Gamma\left(\frac{1}{2s} +1\right)}{\Gamma\left(\frac{n+\frac{1}{s}} {2} +1\right)},$$ which is asymptotically equivalent to $\frac{(2 \pi e)^\frac{n}{2}}{n_s^\frac{n_s}{2}}$, these estimates show that for fixed $\lam$ the maximal affine surface area is proportional to 
a power of ``volume" of $f$,  up to a constant  depending on $n$ and $s$ only. They   are  the analogue to (\ref{Ausdruck1}), but with a slightly better dependence, $n_s^{5/6 -\lam}$ instead of $n^{5/6}$. Subsequently we could improve the exponent 
there as well. The fact that we have two lower bounds is a consequence of the above 
duality formula (\ref{duality1}). 
\par
\noindent
Corresponding estimates for the other extremal affine surface areas are also proved in Theorem \ref{prelprop} and Proposition \ref{prelprop1}.
\par
\noindent
Among the tools used for the proofs are the (functional) Blaschke Santal\'o inequality,  the L\"owner ellipsoid, and  a thin shell estimate by Y.T.  Lee and S. S. Vempala \cite{LeeVempala17}, O. Gu\'edon and E. Milman \cite{GuedonMilman}, see also G. Paouris \cite{Paouris}, on concentration of volume. 
\par
\noindent
As a consequence of Theorem \ref{prelprop} and Proposition \ref{prelprop1} we recover the results obtained in \cite{GiladiHuangSchuettWerner} for convex bodies 
(with an additional mild assumption). For instance for $p=1$, we get the following  slightly stronger version of (\ref{Ausdruck1}), 
\begin{equation*}\label{Ausdruck1} 
\max \left\{\left(\frac{c}{L_K} \right)^\frac{2n}{n+1} \frac{1}{ n^{5/6-1/(n+1)}}, \frac{1}{n^{n\frac{n-1}{n+1}}} \right\}\,  n \,  \text{vol}_n(B^n_2)^ \frac{2}{n+1} \leq \frac{\sup_{L\subseteq K} as (L)}{\text{vol}_n(K)^\frac{n-1}{n+1} }  \leq  n \,  \text{vol}_n(B^n_2)^ \frac{2}{n+1}.
\end{equation*}

\par
\noindent
\subsection{Notation.}
We work in $n$-dimensional Euclidean space $\mathbb{R}^n$ equipped
with the standard inner product and Euclidean norm $| \cdot |$. We denote by $B^n_2(a,r)$ the $n$-dimensional Euclidean ball with radius $r$ centered at $a$. We write in short $B^n_2=B^n_2(0,1)$. $S^{n-1}$ is the Euclidean sphere.
 The interior  and
boundary of a set $A \subset  \mathbb{R}^n$ are denoted by $\text{int}(A)$ and $\partial A$,  respectively.
The volume  of a Lebesgue measurable set $A$ in $\mathbb{R}^n$ is $\text{vol}_n(K)$. 
We write in short, in particular in long formulas,   $|K|$  instead of $\text{vol}_n(K)$.
\par
\noindent
Finally, $c$, $C$, $C_0$, etc.   are absolute constants that may change from line to line.

\section{Background}

\subsection{ $s$-concave functions.}

Let $s > 0$, $s \in \mathbb{R}^n$ and let $f: \mathbb{R}^n \rightarrow \mathbb{R}_+$.  Following Borell \cite{Borell1975}, 
we say that  $f$ is $s$-concave if for every $\lambda \in [0,1]$ and all  $x$ and $y$ such that $f(x) >0$ and $f(y) > 0$,
\[
f( (1-\lam)x + \lam y) \ge \left( (1-\lam) f(x)^s + \lam f(y)^s \right)^{1/s}.
\]
Since $s>0$, one may equivalently assume that $f^s$ is concave on its support,  the  convex set $ S_f=\{x: f(x) >0\}$.
\par
\noindent
We always consider in this paper  $s$-concave functions that are integrable and non-degenerate,  
i.e., the interior of the support of $f$ is non-empty, $\operatorname{int} (S_f) \neq \emptyset$. 
This means that $0 < \int _{\mathbb{R}^n} f\, dx < \infty$.
Note that $f$ is continuous on $\operatorname{int} (S_f)$ and  we may assume that $f$ is upper semi continuous.
We may also assume that $0\in\operatorname{int} (S_f)$, see Section \ref{def-ex}.
\par
\noindent
We will denote the class of these  $s$-concave functions by ${\mathit C}_s(\R^n)$.
Observe that if 
\begin{equation}\label{contain}
s_1 \leq s_2,  \hskip 3mm \text{then }
\hskip 3mm {\mathit C}_{s_2}(\R^n) \subseteq {\mathit C}_{s_1}(\R^n).
\end{equation} 
The $s$-concave function $g^{(s)}_{e,r}: \mathbb{R}^n \rightarrow \mathbb{R}_+$ that plays the role of the Euclidean ball with radius $r>0$ in ${\mathit C}_{s}(\R^n)$ is 
\begin{equation}\label{r-s-ball}
g^{(s)}_{e,r}(x) = (r^2-s|x|^2)_+^\frac{1}{2s},
\end{equation}
where, for $a \in \mathbb{R}_+$, $a_+= \max \{a,0\}$.
When $r=1$, we write in short
$$g^{(s)}_e(x) = (1-s |x|^2)_+^\frac{1}{2s}.
$$
\vskip 3mm
\noindent
We refer to the books  \cite{GardnerBook, Rockafellar, SchneiderBook} for background on convex functions and convex bodies.
\vskip 4mm
\noindent
\subsection {An associated body.}
Sometimes it is convenient to pass from the functional setting to the convex body setting. This can be achieved  in the following way. For $s >0$ such that $\frac{1}{s} \in \mathbb{N}$, we  associate with an $s$-concave function  $f$ a  convex body $K_{s}(f)$ \cite{ArtKlarMil} (see also \cite{ArtKlarSchuWer})
in $\R^{n}\times \R^\frac{1}{s}$, 
\begin{equation}\label{def.Ksf}
K_s(f)=\big\{(x,y) \in \R^n \times \R^\frac{1}{s}: x/\sqrt{s} \in
\overline{S_f}, |y| \leq
f^s (x/\sqrt{s})\big\}.\end{equation}
Note that $K_s(g^{(s)}_{e,r})= B_2^{n+\frac{1}{s}}(0, r)= r B_2^{n+\frac{1}{s}}$.
\par
\noindent
Then,  
\begin{equation}\label{vol-Ks}
 \text{vol}_{n+\frac{1}{s}} \left(K_s(f)\right) = s^\frac{n}{2}  \text{vol}_{\frac{1}{s}} \left(B_2^\frac{1}{s}\right)\, \int_{\mathbb{R}^n} f\, dx
\end{equation}
and 
\begin{equation}\label{vol-rKs-euclid}
 \text{vol}_{n+\frac{1}{s}} \left(K_s(g^{(s)}_{e,r})\right) = s^\frac{n}{2} \text{vol}_{\frac{1}{s}} \left(B_2^\frac{1}{s}\right)\, \int_{\mathbb{R}^n} g^{(s)}_{e,r} \, dx = 
  \text{vol}_{n+\frac{1}{s}} \left(r\, B_2^{n+\frac{1}{s}}\right).
\end{equation}
In particular
\begin{equation}\label{vol-Ks-euclid}
 \text{vol}_{n+\frac{1}{s}} \left(K_s(g^{(s)}_e)\right) = s^\frac{n}{2} \text{vol}_{\frac{1}{s}} \left(B_2^\frac{1}{s}\right)\, \int_{\mathbb{R}^n} g^{(s)}_e \, dx =  \text{vol}_{n+\frac{1}{s}} \left(B_2^{n+\frac{1}{s}}\right).
\end{equation}
\vskip 3mm
\noindent
\subsection{ $\lam$-affine surface area for  $s$-concave functions.}
When the  function $f\in C_s(\mathbb{R}^n)$ is sufficiently smooth, the gradient of $f$, denoted  by $\nabla f$, and the Hessian of $f$, denoted  by
$\nabla^2 f$ exist everywhere in the interior of $S_f$.
In the general case, when $f$ is not smooth enough, 
the gradient of $f$ exists almost everywhere by Rademacher's theorem (see, e.g., \cite{Rademacher}),   and a theorem of Alexandrov \cite{Alexandroff} and Busemann and Feller \cite{Buse-Feller} guarantees the existence of the Hessian
almost everywhere in the interior of $S_f$. Let $X_f$  be the set of points of $\operatorname{int} (S_f)$ at which its Hessian $\nabla^2 f$ in the sense of Alexandrov is defined and  invertible.
\vskip 3mm
\noindent
For functions $f \in  {\mathit C}_s(\R^n)$, the $\lambda$-affine surface areas 
have been introduced 
in \cite{CFGLSW} 
as follows. 
\par
\noindent
\begin{defn}  \cite{CFGLSW}
\label{def:s}
Let $s > 0$ and  let $f \in C_s(\mathbb{R}^n)$.  
For any $\lam \in \R$, let 
\begin{equation}\label{def-1}
as_{\lam}^{(s)}(f) = \frac{1}{s^{n \lam} (1+ns)} \, \int_{X_f} \frac{f^s(x)^{ \left(\frac{1}{s}-1\right)(1-\lam)}
 \left(\det (-\Hess (f^s (x) ))\right)^\lam}
 {\left(f^s(x) -\langle x , \nabla (f^s)(x) \rangle\right)^{\lam\left(n+\frac{1}{s}+1\right) - 1 }} \  dx.
\end{equation}
\end{defn}
\par
\noindent
By the  remarks at the beginning of this section, the $\lam$-affine surface areas are well defined. 
The normalizing factors in the expression are chosen so that $as_{\lam}^{(s)}(g_e) =\int g_e dx$, see Lemma \ref{lemma-phie} below.
Note also that since $\det (\nabla^2 (f^s)(x))=0$ outside $X_f$, the integral may be taken on $\operatorname{int} (S_f)$ for $\lambda>0$. The expression (\ref{def-1}) can be rewritten as
\begin{equation*}
as_{\lam}^{(s)}(f) = \frac{1}{1+ns}\,  \int_{X_f} \frac{f^{2 (1-\lam) -s}
 \left(\det (- \left(\frac{\Hess f }{f} +(s-1) \frac{\nabla f \otimes \nabla f}{f^2}\right))\right)^\lam}
 {\left(1 -s \langle x , \frac{\nabla  f}{f} \rangle \right)^{\lam\left(n+\frac{1}{s}+1\right) - 1 }} \  dx.
\end{equation*}
\vskip 2mm
\noindent
There is yet another way how to express the  $\lam$-affine surface area of an $s$-concave function.  In fact, it was in this way that it was originally introduced in \cite{CFGLSW}.
We define a  convex function $\psi$ on $S_f$  by
\begin{equation}\label{def:psi}
\psi(x)= \frac{1-f^s(x)}{s},   \   \ x\, \in \,  S_f,
 \end{equation}
extend it by continuity to the closure of $S_f$ and by $+\infty$ outside the closure of $S_f$. 
Then
\begin{equation}\label{def-2}
as_{\lam}^{(s)}(f) = \frac{1}{1+ns} \,  \int_{X_\psi} \frac{\left(1-s \psi(x) \right)^{\left(\frac{1}{s}-1\right)(1-\lam)}
 \left(\det   \Hess  \psi (x) \right)^\lam}
 {\left(1+s(\langle x , \nabla \psi(x) \rangle -\psi(x))\right)^{\lam\left(n+\frac{1}{s}+1\right) - 1 }} \  dx, 
\end{equation}
where $X_\psi$ is the set of points of $\Omega_\psi$ at which its Hessian $\nabla^2\psi$ in the sense of Alexandrov is defined and  invertible
and $\Omega_\psi$ is the interior of the convex domain of $\psi$, that is 
\[
\Omega_\psi= \mathrm{int} \, (\{x \in \R^n, \psi(x) < +\infty\} ) =  \mathrm{int} \, (S_f). \]
It is easy to see that (\ref{def-1}) and (\ref{def-2}) are equivalent.
It was noted in \cite{CFGLSW} that 
\begin{equation}\label{def:asa-0}
as_{0}^{(s)} (f) = \int_{S_{f}} f \, dx = \int_{\mathbb{R}^n} f(x) dx.
\end{equation}
\vskip 2mm
\noindent
The next identities explain why  we call the quantities $as_{\lambda}^{(s)}$ functional affine surface areas.
\par
\noindent
For $\frac{1}{s} \in \mathbb{N}$, it follows from Proposition 5 in  \cite{CaglarWerner2014} that 
the $\lam$-affine surface area of a sufficiently smooth $f \in {\mathit C}_{s}(\R^n)$ is a multiple of the  $p$-affine surface area of $K_{s}(f)$ with $p=\left(n+\frac{1}{s}\right)\ \frac{\lam}{1-\lam}$, 
$p \neq - \left(n+\frac{1}{s}\right)$, 
\begin{equation}\label{motiv1}
as_{\lambda}^{(s)} (f)  = \frac{as_{p} \left(K_s(f) \right)}{ (n +\frac{1}{s})\,  s^\frac{n}{2} \, \text{vol}_{\frac{1}{s}} \left(B_2^{\frac{1}{s}}\right) }.
\end{equation}
Note that for $\lam=0$ we get 
\begin{equation}\label{lam=0}
\int_{S_{f}} f \, dx = as_{0} ^{(s)}(f)  =  \frac{\text{vol}_{\frac{1}{s}} \left(K_s(f) \right)}{  s^\frac{n}{2} \, \text{vol}_{\frac{1}{s}} \left(B_2^{\frac{1}{s}}\right) }
\end{equation}
and for 
$\lam = 1$
\begin{equation}\label{lam=1}
 as_{1} ^{(s)}(f)  = \frac{as_{\infty} \left(K_s(f) \right)}{ (n +\frac{1}{s})\,  s^\frac{n}{2} \, \text{vol}_{\frac{1}{s}} \left(B_2^{\frac{1}{s}}\right) }.
\end{equation}
\vskip 3mm
\noindent
Before we  recall more  facts from \cite{CFGLSW},  we will need a notion of  dual of an $s$-concave function. For the  construction of such a duality  we follow \cite{ArtKlarMil, Meyer91}. See also \cite{CFGLSW}.
We  define the $(s)$-Legendre dual of $f$ 
as 
\begin{equation}\label{Leg}
f_{(s)}^\circ(y)  = \inf_{x \, \in  \, S_f}  \frac{(1 - s \langle x, y \rangle)_+^{1/s}}{f(x)},
\end{equation}
with $S_{f_{(s)}^\circ} = \frac{1}{s} \left(S_f\right)^\circ$.
For   $f  \in {\mathit C}_s(\mathbb{R}^n)$, 
 $f_{(s)}^\circ \in {\mathit C}_s(\mathbb{R}^n)$. It is not difficult to see that as for the Legendre transform, 
$(f_{(s)}^\circ)_{(s)}^\circ = f$. 
Note also that $(g^{(s)}_e)_{(s)}^\circ = g^{(s)}_e$, a further indication  that $g^{(s)}_e$ plays the role of the Euclidean ball.
It was shown in \cite{CFGLSW} that 
\begin{equation}\label{def:asa-1}
as_1^{(s)} (f) = \int_{S_{f_{(s)}^{\circ}}} f_{(s)}^{\circ} (x) dx.
\end{equation}
\vskip 2mm
\noindent
We denote by  ${\mathit C}_s^+(\R^n)$  the set of 
functions $f \in {\mathit C}_s(\R^n)$ 
that 
are  twice continuously differentiable on $S_f$,  have non-zero  Hessian  on $S_f$ and  $\lim_{x \rightarrow \partial (S_f) } f^s(x)=0$.
Observe that for $f \in C_s^+(\mathbb{R}^n)$ 
we have that $\left(f_{(s)}^\circ\right)_{(s)}^\circ=f$, hence $f \to f_{(s)}^\circ$ is a bijection on $C_s^+(\mathbb{R}^n)$.
\vskip 3mm
\noindent
The following results were shown in \cite{CFGLSW}.
\vskip 2mm
\begin{prop}  \cite{CFGLSW} \label{Ungleichung1} 
\label{properties}
Let  $s > 0$ and $f \in C_s(\mathbb{R}^n)$. 
\par
\noindent
(i) For all linear maps $T: \mathbb{R}^n \to \mathbb{R}^n$ and for all $\lam \in \mathbb{R}$, 
\begin{equation*}
as_\lam^{(s)}(f \circ T) = |\det T| ^{2 \lam-1}\, as_\lam ^{(s)}(f).
\end{equation*}
\par
Moreover, for all $\alpha \in \mathbb{R}$, $\alpha > 0$, $as_\lam^{(s)}( \alpha \, f ) =  \alpha^{1-2 \lam}\, as_\lam ^{(s)}(f)$.
 \vskip 2mm
 \noindent
(ii) Let $f \in {\mathit C}_s^+(\R^n)$.  Then 
\begin{eqnarray} \label{ Ungleichung1}  \, 
 \forall \lambda \in [0,1], 
&\quad as_{\lambda}^{(s)} (f)  \leq  \left(\int_{\R^n}f \  dx \right)^{1-\lambda}
 \left( \int_{\R^n}  f^\circ_{(s)}  \  dx \right)^\lambda  ; \nonumber\\
\forall \lambda \notin [0,1], 
& \quad as_{\lambda}^{(s)} (f)  \geq  \left(\int_{\R^n}f \  dx \right)^{1-\lambda}
 \left( \int_{\R^n}  f^\circ_{(s)}  \  dx \right) ^\lambda.
\end{eqnarray}
 Equality holds in the first inequality if $\lam=0$ or $\lam=1$.
\vskip 2mm
\noindent
(iii) For  $f \in {\mathit C}_s^+(\R^n)$,  for all $\lambda  \in \mathbb{R}$, \hskip 3mm $as_{\lambda}^{(s)} (f) = as_{1-\lambda}^{(s)} (f_{(s)}^\circ) $.
\end{prop}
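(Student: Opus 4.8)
The plan is to prove the identity directly from the integral representation (\ref{def-2}), through an explicit change of variables that plays the role, in the functional setting, of the Gauss-map substitution used for $L_p$-affine surface areas of convex bodies. Write $\psi(x)=\frac{1-f^s(x)}{s}$ and $\phi(y)=\frac{1-\left(f_{(s)}^\circ\right)^s(y)}{s}$; for $f\in C_s^+(\R^n)$ both are $C^2$ and strictly convex, with $\Omega_\psi=\mathrm{int}(S_f)$, $\Omega_\phi=\mathrm{int}\bigl(\tfrac1s(S_f)^\circ\bigr)$ and $X_\psi=\Omega_\psi$, $X_\phi=\Omega_\phi$. For $y\in\Omega_\phi$ the infimum in (\ref{Leg}) is attained at a unique interior point $x=x(y)$ (the ratio $x\mapsto(1-s\langle x,y\rangle)/f^s(x)$ is strictly quasiconvex and tends to $+\infty$ at $\partial S_f$, since $f^s\to0$ there), and differentiating the optimality condition gives $\nabla\phi(y)=x/f^s(x)$ and, symmetrically, $\nabla\psi(x)=y/\left(f_{(s)}^\circ\right)^s(y)$. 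Setting $D(x)=1+s(\langle x,\nabla\psi(x)\rangle-\psi(x))$ and $\widetilde D(y)=1+s(\langle y,\nabla\phi(y)\rangle-\phi(y))$, the optimality relations yield the key formulas
\[
y=\frac{\nabla\psi(x)}{D(x)},\qquad \left(f_{(s)}^\circ\right)^s(y)=\frac{1}{D(x)},\qquad f^s(x)=\frac{1}{\widetilde D(y)},
\]
so that $T\colon x\mapsto\nabla\psi(x)/D(x)$ is a bijection of $\Omega_\psi$ onto $\Omega_\phi$, with inverse $y\mapsto\nabla\phi(y)/\widetilde D(y)$, and is a $C^1$-diffeomorphism because $\Hess\psi$ is continuous and non-singular on $\Omega_\psi$.

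Next I would compute the Jacobian of $T$. From $\nabla D(x)=s\,\Hess\psi(x)\,x$ one gets
\[
DT(x)=\frac{1}{D(x)^{2}}\bigl(D(x)I-s\,\nabla\psi(x)\,x^{\top}\bigr)\Hess\psi(x),
\]
and since $\det\bigl(D(x)I-s\,\nabla\psi(x)\,x^{\top}\bigr)=D(x)^{n-1}\bigl(D(x)-s\langle x,\nabla\psi(x)\rangle\bigr)$ with $D(x)-s\langle x,\nabla\psi(x)\rangle=1-s\psi(x)=f^s(x)$, this yields
\[
\det DT(x)=\frac{f^s(x)\,\det\Hess\psi(x)}{D(x)^{\,n+1}}.
\]
Applying the same computation to $T^{-1}$ — equivalently, using the symmetry $f\leftrightarrow f_{(s)}^\circ$, legitimate since $\bigl(f_{(s)}^\circ\bigr)_{(s)}^\circ=f$ for $f\in C_s^+(\R^n)$ — one obtains $\det\Hess\phi\bigl(T(x)\bigr)=D(x)^{n+2}\bigl((f^s(x))^{n+2}\det\Hess\psi(x)\bigr)^{-1}$.

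Finally I would substitute $y=T(x)$ in the integral (\ref{def-2}) for $as^{(s)}_{1-\lam}\bigl(f_{(s)}^\circ\bigr)$, replacing $\bigl(f_{(s)}^\circ\bigr)^s(y)$, $\widetilde D(y)$, $\det\Hess\phi(y)$ and $dy$ by the expressions above, and collect the resulting exponents of $D(x)$, of $f^s(x)$, and of $\det\Hess\psi(x)$ separately. A short bookkeeping shows that these exponents become $1-\lam(n+\tfrac1s+1)$, $(\tfrac1s-1)(1-\lam)$ and $\lam$ respectively, which are exactly the exponents occurring in (\ref{def-2}) for $as^{(s)}_\lam(f)$; hence $as^{(s)}_{1-\lam}\bigl(f_{(s)}^\circ\bigr)=as^{(s)}_\lam(f)$, the identity holding in $[0,+\infty]$ since all integrands are nonnegative on the relevant domains (so no integrability hypothesis is needed). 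The exponent arithmetic is routine; the main obstacle is the first paragraph — establishing the gradient/optimality dictionary between $\psi$ and $\phi$, and in particular verifying that it is the \emph{normalized} map $T$, not $\nabla\psi$ alone (which need not be onto $\Omega_\phi$, as already happens for $g^{(s)}_e$), that is a diffeomorphism onto the correct domain. When $1/s\in\N$ the identity can instead be deduced from (\ref{motiv1}), the polarity relation $K_s\bigl(f_{(s)}^\circ\bigr)=\bigl(K_s(f)\bigr)^\circ$, and the $L_p$-affine surface area duality $as_p(K)=as_{N^2/p}(K^\circ)$ with $N=n+\tfrac1s$ (which sends the index $p=N\lam/(1-\lam)$ corresponding to $\lam$ to the index $N(1-\lam)/\lam$ corresponding to $1-\lam$); the change-of-variables argument has the advantage of covering all $s>0$ at once.
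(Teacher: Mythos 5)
The central problem is coverage, not correctness of what you wrote: the proposition has three parts, and your argument addresses only (iii). Nothing in the proposal proves (i) --- the behaviour of $as_\lambda^{(s)}$ under $f\mapsto f\circ T$ and $f\mapsto \alpha f$ --- or (ii) --- the upper bound for $\lambda\in[0,1]$, the reversed bound for $\lambda\notin[0,1]$, and the equality cases at $\lambda=0,1$. Neither is a formal consequence of the duality (iii): part (i) needs its own substitution $x\mapsto Tx$ in (\ref{def-1}), where $\nabla^2\big((f\circ T)^s\big)(x)=T^{\top}\,\nabla^2 (f^s)(Tx)\,T$ contributes $|\det T|^{2\lambda}$ and the volume element contributes $|\det T|^{-1}$; part (ii) needs the endpoint identities $as_0^{(s)}(f)=\int_{\R^n} f\,dx$ and $as_1^{(s)}(f)=\int_{\R^n} f_{(s)}^{\circ}\,dx$ (see (\ref{def:asa-0}), (\ref{def:asa-1})) combined with the pointwise factorization of the integrand as (``$\lambda=1$ integrand'')$^{\lambda}\cdot$(``$\lambda=0$ integrand'')$^{1-\lambda}$ and H\"older (for $\lambda\in[0,1]$) or reverse H\"older (for $\lambda\notin[0,1]$) --- exactly the splitting the paper itself uses in Proposition \ref{mono-asa} with $\mu=1$. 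As written, the proposal is therefore an incomplete proof of the statement. Note also that there is no in-paper proof to compare against: Proposition \ref{properties} is imported from \cite{CFGLSW} without proof.

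For the part you do treat, the argument is essentially right and is the natural functional analogue of the Gauss-map substitution used for $as_p$ of convex bodies. The optimality dictionary $y=\nabla\psi(x)/D(x)$, $\big(f_{(s)}^{\circ}\big)^s(y)=1/D(x)$, $f^s(x)=1/\widetilde D(y)$ follows from differentiating the infimum in (\ref{Leg}); the Jacobian $\det DT(x)=f^s(x)\det\nabla^2\psi(x)/D(x)^{n+1}$ and the derived formula for $\det\nabla^2\phi(T(x))$ are correct; and the exponent bookkeeping does reproduce (\ref{def-2}): the powers of $D$, $f^s$ and $\det\nabla^2\psi$ come out as $1-\lambda\left(n+\frac1s+1\right)$, $\left(\frac1s-1\right)(1-\lambda)$ and $\lambda$. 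The two technical points you flag must genuinely be carried out rather than asserted: attainment and uniqueness of the minimizer for $y\in\Omega_\phi$ (boundedness of $S_f$, $f^s\to0$ on $\partial S_f$ and positive definiteness of $\nabla^2\psi$ suffice), and surjectivity of the normalized map $T$ onto $\Omega_\phi$ (equivalently, that the same dictionary applied to $f_{(s)}^{\circ}$ inverts $T$), which also justifies your standing claim $X_\phi=\Omega_\phi$; for $f\in C_s^+(\R^n)$ these are routine. If you add the short arguments for (i) and (ii) sketched above --- with the $\lambda=1$ equality case supplied by your own identity at $\lambda=1$ together with (\ref{def:asa-0}) applied to $f_{(s)}^{\circ}$ --- the whole proposition is proved, and for all $s>0$ rather than only $1/s\in\N$.
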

\vskip 3mm
\noindent
We denote by ${\mathit PL}_s(\R^n)$ the set of functions on $\mathbb{R}^n$ that consist piecewise of functions of the form $f_l(x) =(1-s \psi_l(x))^\frac{1}{s}$, where $\psi_l(x)=\alpha - \langle a, x \rangle$, with $\alpha \in \mathbb{R}$, $a \in \mathbb{R}^n$, $a \neq 0$.
\vskip 2mm
\noindent
The following lemma is easy to see.
\begin{lem} \label{lemma-phie}
Let $s \in \mathbb{R}$, $s>0$. 
Let $\lam \in \mathbb{R}$. Then
\par
(i) $as_{\lambda}^{(s)} (g^{(s)}_{e}) =  \int_{\mathbb{R}^n} g^{(s)}_e dx=  \frac{ \text{vol}_{n+\frac{1}{s}} \left(B^{n+\frac{1}{s}}_2\right)}{s^\frac{n}{2}  \text{vol}_{\frac{1}{s}} \left(B_2^{\frac{1}{s}}\right)}=\frac{\pi^{\frac{n}{2}}}{s^\frac{n}{2}}  \, 
\frac{\Gamma\left(\frac{1}{2s} +1\right)}{\Gamma\left(\frac{n+\frac{1}{s}} {2} +1\right)}$. 
\par
(ii) $as_{\lambda}^{(s)} (g^{(s)}_{e,r}) =  r^{\left(n+\frac{1}{s}\right)  (1-2 \lambda)} as_{\lambda}^{(s)} (g^{(s)}_{e})$. 
In particular, $as_{\frac{1}{2}} ^{(s)}(g^{(s)}_{e,r}) =   as_{\frac{1}{2}} ^{(s)}(g^{(s)}_{e})$.
\par
(iii) Let $f_l \in {\mathit PL}_s(\R^n)$. Then
\begin{eqnarray*}
as_{\lambda}^{(s)} (f_{l}) = \left\{
\begin{aligned}
&0 \quad &\text{ if }\  \lam > 0,\\
&\infty \quad &\text{ if }\,  \lam <0 .
\end{aligned}\right.
\end{eqnarray*}
\end{lem}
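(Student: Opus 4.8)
\emph{Part (i).} I would prove this by a direct computation. Set $h:=(g^{(s)}_e)^s$, so $h(x)=(1-s|x|^2)^{1/2}$ on the open ball $B:=\{x:|x|<1/\sqrt s\}=\operatorname{int}(S_{g^{(s)}_e})$. First I would record
\[
\nabla h=-\frac{s\,x}{h},\qquad h-\langle x,\nabla h\rangle=h+\frac{s|x|^2}{h}=\frac{h^2+s|x|^2}{h}=\frac1h,\qquad -\Hess h=\frac{s}{h}\Big(I+\frac{s\,x\otimes x}{h^2}\Big),
\]
the last line using $h^2=1-s|x|^2$; in particular $\det(-\Hess h)=(s/h)^n\big(1+s|x|^2/h^2\big)=s^n/h^{n+2}$ and $\Hess h$ is invertible on all of $B$, so $X_{g^{(s)}_e}=B$. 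Substituting these into (\ref{def-1}) with $f=g^{(s)}_e$, the factor $s^{n\lambda}$ produced by $(\det(-\Hess h))^\lambda$ cancels the prefactor $s^{-n\lambda}$, and collecting the exponents of $h$ one checks that every $\lambda$-dependent power cancels, leaving
\[
as^{(s)}_\lambda(g^{(s)}_e)=\frac{1}{1+ns}\int_{B}(1-s|x|^2)^{\frac1{2s}-1}\,dx,
\]
independently of $\lambda$. Hence $as^{(s)}_\lambda(g^{(s)}_e)=as^{(s)}_0(g^{(s)}_e)=\int_{\R^n}g^{(s)}_e\,dx$ by (\ref{def:asa-0}), which is the first claimed equality. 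For the closed form I would evaluate $\int_{\R^n}g^{(s)}_e\,dx=\int_{\R^n}(1-s|x|^2)_+^{1/(2s)}dx$ by the substitution $u=\sqrt s\,x$ and polar coordinates, turning it into a Beta integral; using $\Gamma(z+1)=z\Gamma(z)$ this yields $\frac{\pi^{n/2}}{s^{n/2}}\frac{\Gamma(\frac1{2s}+1)}{\Gamma(\frac{n+1/s}{2}+1)}$. The middle equality then follows from $\text{vol}_d(B^d_2)=\pi^{d/2}/\Gamma(\tfrac d2+1)$ (and for $\tfrac1s\in\N$ it is exactly (\ref{vol-Ks-euclid}) with $r=1$).

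\emph{Part (ii).} I would reduce this to Part (i) via Proposition \ref{properties}(i). Since $g^{(s)}_{e,r}(x)=r^{1/s}\,g^{(s)}_e(x/r)$, i.e.\ $g^{(s)}_{e,r}=r^{1/s}\big(g^{(s)}_e\circ T\big)$ with $T=\tfrac1r\,\mathrm{id}$ and $|\det T|=r^{-n}$, Proposition \ref{properties}(i) gives first $as^{(s)}_\lambda(g^{(s)}_e\circ T)=r^{-n(2\lambda-1)}as^{(s)}_\lambda(g^{(s)}_e)$ and then, with the positive scalar $\alpha=r^{1/s}$, an additional factor $r^{(1/s)(1-2\lambda)}$; multiplying gives $as^{(s)}_\lambda(g^{(s)}_{e,r})=r^{(n+1/s)(1-2\lambda)}as^{(s)}_\lambda(g^{(s)}_e)$. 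Setting $\lambda=\tfrac12$ makes the exponent vanish, which is the last assertion.

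\emph{Part (iii).} Let $F\in{\mathit PL}_s(\R^n)$. Then $\operatorname{int}(S_F)$ is covered, up to a Lebesgue-null set (the interfaces between the linear pieces), by open regions on each of which $F^s=1-s\psi_l$ is affine, so $\Hess(F^s)=0$ a.e.\ on $\operatorname{int}(S_F)$ in the sense of Alexandrov. For $\lambda>0$ the integral in (\ref{def-1}) may be taken over $\operatorname{int}(S_F)$ (remark following (\ref{def-1})), and there the integrand vanishes a.e.\ because of the factor $(\det(-\Hess(F^s)))^\lambda=0$; hence $as^{(s)}_\lambda(F)=0$. For $\lambda<0$ that same factor equals $0^\lambda=+\infty$ on the positive-measure set $\operatorname{int}(S_F)$, so the integral diverges and $as^{(s)}_\lambda(F)=+\infty$.

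\emph{Where the work is.} Parts (ii) and (iii) are essentially formal, using only Proposition \ref{properties}(i), formula (\ref{def-1}) and the remark after it. The one genuine computation is in Part (i): after inserting the expressions for $\nabla h$, $h-\langle x,\nabla h\rangle$ and $\det(-\Hess h)$ into (\ref{def-1}), one must verify that the powers of $h$ and of $s$ recombine to something completely independent of $\lambda$ — this cancellation is exactly what forces $as^{(s)}_\lambda(g^{(s)}_e)=as^{(s)}_0(g^{(s)}_e)$ for all $\lambda$ — and then carry out the routine Beta-function evaluation for the explicit constant.
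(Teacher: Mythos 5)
Your computation is correct: the cancellation of all $\lambda$-dependent exponents of $h=(1-s|x|^2)^{1/2}$ in (\ref{def-1}), the Beta-function evaluation, the scaling argument via Proposition \ref{properties}(i) with $g^{(s)}_{e,r}=r^{1/s}\,g^{(s)}_e(\cdot/r)$, and the vanishing Hessian argument for ${\mathit PL}_s(\R^n)$ all check out. The paper gives no proof (it declares the lemma ``easy to see''), and your direct verification is exactly the intended one; the only point worth flagging is that in (iii) for $\lam<0$ one must read the integral over $\operatorname{int}(S_{f_l})$ with the convention $0^{\lam}=+\infty$ (as in the convex-body case $as_p$ of polytopes for $p<0$), which is what you do and is clearly what the authors intend given the remark following (\ref{def-1}).
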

\par
\noindent
Recall that by (\ref{def:asa-0}),   $as_{0}^{(s)} (f_l) = \int_{S_{f_l}} f_l \, dx$.

\section{Extremal affine surface areas for functions} \label{Ex-asa-functions}

\subsection{Definition.} \label{def-ex}

For convex bodies $K$ in $\mathbb{R}^n$, extremal affine surface areas have been defined in \cite{GiladiHuangSchuettWerner} as follows:
For $-\infty \leq p \leq \infty$, $p \neq -n$,
\begin{equation}\label{def sup-body}
IS_p(K) =  \sup_{C \subseteq  K, \, C\,  \text{convex}}\big(as_p(C)\big),           \ \   \   OS _p(K)= \sup_{K \subseteq C, \, C\,  \text{convex}}\big(as_p(C)\big),  
\end{equation}
and 
\begin{eqnarray}\label{def inf-body}
{i s}_p(K)=  \inf_{C \subseteq K, \, C\,  \text{convex}}\big(as_p(C)\big),         \ \   \   {os}_p(K)=  \inf_{ K \subseteq C, \, C\,  \text{convex} }\big(as_p(C)\big).
\end{eqnarray}
\par
\noindent
We now define the 
corresponding quantities for $s$-concave functions.  
\par
\noindent
In the case  of convex bodies $K$, it was natural to position $K$ properly, namely such that
the center of gravity is at $0$.
We do the same here and assume throughout that the  functions $f$ considered  have center of gravity at $0$, 
\begin{equation} 
\int_{S_f} x \, f dx =0.
\end{equation}
Since $S_f$ is convex, the center of gravity is in the interior of $S_f$,  $0 \in \operatorname{int}(S_f)$.
\vskip 2mm
\noindent
\begin{defn}\label{ex-asa}
Let  $\lambda \in \mathbb{R}$ and let  $f \in {\mathit C}_s(\R^n)$.
We define the {\em inner and outer maximal affine surface areas} of $f$ by
\begin{equation}\label{def sup}
IS^{(s)}_\lam(f) =  \sup_{h \leq f, h \in {\mathit C}_s^+(\R^n)} as^{(s)}_\lam(h),           \ \   \   OS^{(s)} _\lam(f)= \sup_{h \geq f, h \in {\mathit C}_s^+(\R^n)}as^{(s)}_\lam(h),  
\end{equation}
and the {\em inner and outer minimal affine surface areas} by
\begin{eqnarray}\label{def inf}
{i s}^{(s)}_\lam(f)=  \inf_{h \leq f, h \in {\mathit C}_s^+(\R^n)}as^{(s)}_\lam(h),         \ \   \   {os}^{(s)}_\lam(f)=  \inf_{h \geq f, h \in {\mathit C}_s^+(\R^n)}as^{(s)}_\lam(h). 
\end{eqnarray}
The extrema are taken over all $s$-concave functions $h$ such that the support of $h$ is contained in the support of $f$,  $S_h \subseteq S_f$, and 
$h(x) \leq f(x)$ for all $x \in S_h$ 
or such that the support of $h$ contains the support of $f$, $S_f \subseteq S_h$,
and  $h(x) \geq f(x)$ for all $x \in S_f$.
\end{defn}
\vskip 2mm
\noindent
{\bf Remarks.}  
\par
\noindent
1. Our definition of functional extremal affine surface areas differs slightly from the one for convex bodies, given in (\ref{def sup-body}) and (\ref{def inf-body}).  As we shall see in Section  \ref{main}, this ensures that now 
$is^{(s)}_\lam$ is not identical equal to $0$ which was the case for the corresponding expression in the convex body setting. This  has a duality formula as a consequence, 
see Theorem \ref{basic} (iv).
\par
\noindent
2. We show  in Theorem \ref{ranges} that there are meaningful $\lam$-ranges for the functional extremal affine surface areas. Outside these ranges, which differ for the different 
extremal surface areas,  they  are either $0$ or infinity.
\par
\noindent
3. While for $f \in PL(\mathbb{R}^n)$, $as_{\lambda}^{(s)} (f_{l}) =0$ for $\lam >0$, respectively, $as_{\lambda}^{(s)} (f_{l}) =\infty$ for $\lam <0$, we now have
 that for the meaningful $\lam$-ranges, 
 $$
IS_\lam^{(s)}(f) >0,  \,  \lam \in\left [0, \frac{1}{2}\right], \hskip 3mm  OS_\lam^{(s)}(f) > 0, \, \lam \in \left[ \frac{1}{2},1\right], 
$$
$$
 os_\lam^{(s)}(f) < \infty, \, \lam \in (- \infty, 0], \hskip 3mm  is_\lam^{(s)}(f) >0,  \,  \lam \in [1, \infty).
$$
\par
\noindent
4. Please note that  because of the modified definition, even for  $s>0$  such that $\frac{1}{s} \in \mathbb{N}$,    
$$
IS_\lam^{(s)}(f) \leq  \frac{IS_p\left(K_s(f)\right)}{(n +\frac{1}{s})\,  s^\frac{n}{2} \, \text{vol}_{\frac{1}{s}} \left(B_2^{\frac{1}{s}}\right)}, \hskip 3mm OS_\lam^{(s)}(f) \leq \frac{OS_p\left(K_s(f)\right)}{(n +\frac{1}{s})\,  s^\frac{n}{2}  \text{vol}_{\frac{1}{s}} \left(B_2^{\frac{1}{s}}\right)}, 
$$
$$
os_\lam^{(s)}(f) \geq \frac{os_p\left(K_s(f)\right)}{(n +\frac{1}{s})\,  s^\frac{n}{2} \, \text{vol}_{\frac{1}{s}} \left(B_2^{\frac{1}{s}}\right)}, \hskip 3mm 
is_\lam^{(s)}(f)\geq \frac{is_p\left(K_s(f)\right)}{(n +\frac{1}{s})\,  s^\frac{n}{2} \, \text{vol}_{\frac{1}{s}} \left(B_2^{\frac{1}{s}}\right)},
$$
where $p=\left(n+\frac{1}{s}\right) \frac{\lam}{1-\lam}$.
\vskip 4mm
\noindent

\section{Main results} \label{main}

\subsection{Basic properties of the extremal surface areas.}

Before we state the results on the extremal surface areas, we show a monotonicity property of the $\lam$-affine surface areas which we will need later.
\vskip 2mm
\begin{prop} \label{mono-asa}
Let $f \in {\mathit C}_s(\R^n)$.  For $\lam >0$,  $\left(\frac{as_{\lambda}^{(s)} (f)}{as_{0}^{(s)} (f)}\right)^\frac{1}{\lam}$ is strictly increasing in $\lam$ and for
$\lam <0$, $\left(\frac{as_{\lambda}^{(s)} (f)}{as_{0}^{(s)} (f)}\right)^\frac{1}{\lam}$ is strictly decreasing in $\lam$.
\par
\noindent
Equality holds if $f=g_e^{(s)}$.
\end{prop}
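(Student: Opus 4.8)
\ The plan is to write $as^{(s)}_{\lam}(f)$, as a function of $\lam$, as an integral of the form $\int_{X_{f}}P^{1-\lam}Q^{\lam}\,dx$ with $P,Q\ge 0$ \emph{independent of $\lam$}, and then deduce everything from the convexity of $\lam\mapsto\log as^{(s)}_{\lam}(f)$. Unpacking (\ref{def-1}): set $u=f^{s}$, $b=\tfrac1s-1$, $N=n+\tfrac1s$, $D=\det\bigl(-\nabla^{2}u\bigr)$ and $w=u-\langle x,\nabla u\rangle$. On $X_{f}$ one has $D>0$ (since $u$ is concave with invertible Hessian there) and $w>0$ (since $u$ is concave and $0\in\operatorname{int}(S_{f})$), and the integrand of (\ref{def-1}), multiplied by the outer factor $s^{-n\lam}$, equals
\[
s^{-n\lam}\,u^{\,b(1-\lam)}\,D^{\lam}\,w^{\,1-\lam(N+1)}=\bigl(u^{b}w\bigr)^{1-\lam}\bigl(s^{-n}D\,w^{-N}\bigr)^{\lam},
\]
because the exponent triple $\bigl(b(1-\lam),\lam,1-\lam(N+1)\bigr)$ of $(u,D,w)$ splits as $(1-\lam)(b,0,1)+\lam(0,1,-N)$ and the powers of $s$ match. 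Thus, with $P:=u^{b}w$ and $Q:=s^{-n}D\,w^{-N}$, both $\lam$-independent, $(1+ns)\,as^{(s)}_{\lam}(f)=\int_{X_{f}}P(x)^{1-\lam}\,Q(x)^{\lam}\,dx$.

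For $\lam=(1-t)\lam_{1}+t\lam_{2}$ with $t\in[0,1]$ we have $P^{1-\lam}Q^{\lam}=\bigl(P^{1-\lam_{1}}Q^{\lam_{1}}\bigr)^{1-t}\bigl(P^{1-\lam_{2}}Q^{\lam_{2}}\bigr)^{t}$, so H\"older's inequality with exponents $\tfrac1{1-t},\tfrac1t$ yields $as^{(s)}_{\lam}(f)\le as^{(s)}_{\lam_{1}}(f)^{1-t}\,as^{(s)}_{\lam_{2}}(f)^{t}$ (the factors $1+ns$ cancel); equivalently, $\lam\mapsto\log as^{(s)}_{\lam}(f)$ is convex on $\R$, with values in $(-\infty,+\infty]$. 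Since by (\ref{def:asa-0}) the point $\bigl(0,\log as^{(s)}_{0}(f)\bigr)=\bigl(0,\log\int f\bigr)$ lies on this graph, and for a convex function the slope of the chord joining a fixed point of its graph to a running point is monotone, the map
\[
\lam\ \longmapsto\ \frac{\log as^{(s)}_{\lam}(f)-\log as^{(s)}_{0}(f)}{\lam}=\log\left(\frac{as^{(s)}_{\lam}(f)}{as^{(s)}_{0}(f)}\right)^{1/\lam}
\]
is monotone on $(0,\infty)$ and on $(-\infty,0)$, which after exponentiation is exactly the asserted monotonicity of $\bigl(as^{(s)}_{\lam}(f)/as^{(s)}_{0}(f)\bigr)^{1/\lam}$ on each of the two half-lines.

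For the \emph{strict} version and the equality statement I would invoke the equality condition in H\"older's inequality: the chord slope above fails to be strictly monotone on some subinterval only if $P^{1-\lam_{1}}Q^{\lam_{1}}$ and $P^{1-\lam_{2}}Q^{\lam_{2}}$ are proportional a.e.\ on $X_{f}$ for some $\lam_{1}\neq\lam_{2}$, i.e.\ only if $Q/P=s^{-n}u^{-b}D\,w^{-(N+1)}$ is a.e.\ constant. That is the Monge--Amp\`ere equation $\det\bigl(-\nabla^{2}f^{s}\bigr)=c\,s^{n}(f^{s})^{1/s-1}\bigl(f^{s}-\langle x,\nabla f^{s}\rangle\bigr)^{N+1}$, whose suitably normalized solutions are precisely the linear images of $g^{(s)}_{e}$; conversely, for $f=g^{(s)}_{e}$ one checks directly that $Q\equiv P$, and by Lemma \ref{lemma-phie}(i) (together with Proposition \ref{properties}(i)) $as^{(s)}_{\lam}$ is constant in $\lam$, so the quotient is constant — this is the equality case.

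The step I expect to be the main obstacle is not the convexity but making it rigorous for a general, merely $s$-concave $f$: one has to work with the Alexandrov Hessian, verify that the pointwise factorization and H\"older's inequality (with its equality condition) remain valid when the integral is taken over $X_{f}$ rather than over $\operatorname{int}(S_{f})$, and handle the possibility $as^{(s)}_{\lam}(f)=+\infty$ (for instance $\lam<0$ and $f\in{\mathit PL}_{s}(\R^{n})$, by Lemma \ref{lemma-phie}(iii)) so that the chord-slope argument stays meaningful. This can be arranged by first reducing to $f\in{\mathit C}_{s}^{+}(\R^{n})$, where $X_{f}=\operatorname{int}(S_{f})$ and the whole argument is classical, then approximating a general $f$, or by simply confining the statement to the $\lam$-ranges in which the quantities involved are finite.
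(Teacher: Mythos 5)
Your proposal is correct and takes essentially the same route as the paper: the paper applies H\"older's inequality directly to the factorization of the integrand of (\ref{def-1}) into a $\frac{\lam}{\mu}$-power of the $\mu$-integrand and a $(1-\frac{\lam}{\mu})$-power of the $0$-integrand, which is exactly your log-convexity/chord-slope argument with one endpoint fixed at $\lam=0$, and it settles the equality statement for $f=g_e^{(s)}$ by Lemma \ref{lemma-phie}(i) just as you do. Like you, the paper does not substantiate ``strictly'' beyond the non-strict H\"older bound (your Monge--Amp\`ere remark is extra and unproved, but unneeded for the stated equality case), so the two arguments match in substance.
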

\vskip 2mm
\begin{proof} Let $\frac{\mu}{ \lam} \geq 1$. Then, with H\"older's inequality, 
\begin {eqnarray*}
as_{\lambda}^{(s)} (f) &=&  \frac{1}{ (1+ns)} \, \int_{X_f} \frac{f^s(x)^{ \left(\frac{1}{s}-1\right)(1-\lam)}
 \left(\frac{\det (-\Hess (f^s (x) ))}{s^n}\right)^\lam}
 {\left(f^s(x) -\langle x , \nabla (f^s)(x) \rangle\right)^{\lam\left(n+\frac{1}{s}+1\right) - 1 }} \  dx \\
 &=&   \frac{1}{ (1+ns)} \, \int_{X_f} \left[\frac{f^{s \left(\frac{1}{s}-1\right)(1-\mu)} \left(\det (-\Hess (f^s ))\right)^\mu}
 {s^{n \mu} \left(f^s -\langle x , \nabla f^s \rangle\right)^{\mu\left(n+\frac{1}{s}+1\right) - 1 }} 
 \right]^\frac{\lam}{\mu} \\
  && \hskip 50mm  \times\left[ f^{s \left(\frac{1}{s}-1\right) }\left(f^s -\langle x , \nabla f^s \rangle \right)\right]^{1-\frac{\lam}{\mu}} \  dx \\
&\leq& \left[ \frac{1}{ (1+ns) } \int_{X_f}\frac{f^{s \left(\frac{1}{s}-1\right)(1-\mu)} \left(\det (-\Hess (f^s ))\right)^\mu}
 {s^n \left(f^s -\langle x , \nabla f^s \rangle\right)^{\mu\left(n+\frac{1}{s}+1\right) - 1 }} dx
  \right] ^\frac{\lam}{\mu}\\
  && \hskip 35mm  \times  \left[ \frac{1}{ (1+ns) } \int_{X_f} f^{s \left(\frac{1}{s}-1\right) }\left(f^s -\langle x , \nabla f^s \rangle \right) dx \right]^{1-\frac{\lam}{\mu}}\\
&=& \left[as_{\mu}^{(s)} (f)\right]^\frac{\lam}{\mu} \left[as_{0}^{(s)} (f) \right]^{1-\frac{\lam}{\mu}}.
\end{eqnarray*}
From this, the proposition follows. Equality for $f=g_e^{(s)}$ follows from Lemma \ref{lemma-phie} (i).
\end{proof}
\vskip 3mm
\begin{theo}\label{ranges}
Let  $f \in {\mathit C}_s(\R^n)$. 
Then 
the only meaningful $\lambda$-ranges for 
$IS_\lam^{(s)}(f)$, $OS_\lam^{(s)}(f)$, $os_\lam^{(s)}(f)$ and $is_\lam^{(s)}(f)$ 
are $[0,\frac{1}{2}]$, $[\frac{1}{2},1]$,  $(-\infty,0]$  and $[1, \infty)$ respectively.
\par
\noindent
In particular, we have that
$$
IS_0^{(s)}(f) = \int_{\mathbb{R}^n} f dx, \hskip 7mm  IS_\frac{1}{2}^{(s)}(f) = \int_{\mathbb{R}^n} g^{(s)}_e dx = IS_\frac{1}{2}^{(s)}(g^{(s)}_e), $$
$$
OS_{\frac{1}{2}}^{(s)}(f)= \int_{\mathbb{R}^n} g^{(s)}_e dx = OS_{\frac{1}{2}}^{(s)}(g^{(s)}_e), \hskip 7mm  OS_{1}^{(s)}(f)  
\int_{S_{f_{(s)}^{\circ}}} f_{(s)}^{\circ} (x) dx,
$$
$$
os_0^{(s)}(f) = \int_{\mathbb{R}^n} f dx, \hskip 7mm  is_1^{(s)}(f)=\int_{S_{f_{(s)}^{\circ}}} f_{(s)}^{\circ} (x) dx.
$$
\par
\noindent
If $s$ is in addition such that $\frac{1}{s} \in \mathbb{N}$, then for the relevant $\lam$-ranges there are functions $h \in {\mathit C}_s^+(\R^n)$ such that 
$IS_\lam^{(s)}(f) = as_{\lam} ^{(s)}(h)$, $OS_\lam^{(s)}(f)= as_{\lam} ^{(s)}(h)$, $os_\lam^{(s)}(f)= as_{\lam} ^{(s)}(h)$ and $is_\lam^{(s)}(f)=as_{\lam} ^{(s)}(h)$ .
\end{theo}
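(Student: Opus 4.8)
The plan is to treat the three parts of the statement — the meaningful $\lambda$-ranges (with the $0/\infty$ values outside them), the explicit boundary values, and attainment when $\tfrac1s\in\mathbb N$ — and to lean on two reductions throughout. The first is the duality of Proposition \ref{properties}(iii): on $C_s^+(\R^n)$ the map $h\mapsto h_{(s)}^\circ$ is an order-reversing involution with $as_\lam^{(s)}(h)=as_{1-\lam}^{(s)}(h_{(s)}^\circ)$, so that $IS_\lam^{(s)}(f)=\sup_{g\ge f_{(s)}^\circ,\ g\in C_s^+(\R^n)}as_{1-\lam}^{(s)}(g)$ and $is_\lam^{(s)}(f)=\inf_{g\ge f_{(s)}^\circ,\ g\in C_s^+(\R^n)}as_{1-\lam}^{(s)}(g)$; hence every claim for $OS$ and $is$ follows from the corresponding one for $IS$ and $os$ with $\lam$ replaced by $1-\lam$. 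The second is the homogeneity $as_\lam^{(s)}(\alpha h)=\alpha^{1-2\lam}as_\lam^{(s)}(h)$ of Proposition \ref{properties}(i): since $h\le f\Rightarrow\alpha h\le f$ for $\alpha\in(0,1]$ and $h\ge f\Rightarrow\alpha h\ge f$ for $\alpha\ge 1$, picking $h=g_{e,r}^{(s)}$ with $r$ small (resp.\ large) enough that $g_{e,r}^{(s)}\le f$ (resp.\ $\ge f$) — possible because $f^s$ is positive and concave near $0$ (resp.\ $f$ is bounded with bounded support) — and using $as_\lam^{(s)}(g_{e,r}^{(s)})=r^{(n+1/s)(1-2\lam)}\int g_e^{(s)}$ from Lemma \ref{lemma-phie}, I let $\alpha\to0$ to get $IS_\lam^{(s)}(f)=+\infty$ for $\lam>\tfrac12$, and $\alpha\to\infty$ to get $os_\lam^{(s)}(f)=0$ for $\lam>\tfrac12$. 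The remaining ranges, $\lam<0$ for $IS$ and $\lam>0$ for $os$, I would handle with ${\mathit PL}_s(\R^n)$: the piecewise affine interpolant of the convex potential $\psi=\tfrac{1-f^s}{s}$ from (\ref{def:psi}) on a fine triangulation of $S_f$ lies above $\psi$ and so yields $f_l\in{\mathit PL}_s(\R^n)$ with $f_l\le f$, whereas a finite maximum of affine minorants of $\psi$ yields $f_l\in{\mathit PL}_s(\R^n)$ with $f_l\ge f$; mollifying $\psi_{f_l}$ and adding $\eps|x|^2$ to make the Hessian invertible produces $h\in C_s^+(\R^n)$ with the same order relation to $f$ and with $\lam$-affine surface area as close as desired to $as_\lam^{(s)}(f_l)$, which by Lemma \ref{lemma-phie}(iii) is $0$ for $\lam>0$ and (by Fatou applied to the degenerating Hessian) $+\infty$ for $\lam<0$. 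This gives every $0/\infty$ assertion; positivity inside the ranges is witnessed by the functions $g_{e,r}^{(s)}$ above, while finiteness there is the content of Theorem \ref{prelprop}.

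For the boundary values: since $as_0^{(s)}(h)=\int h$ by (\ref{def:asa-0}) and every $f\in C_s(\R^n)$ is an $L^1$-limit of functions of $C_s^+(\R^n)$ lying below it (shrink the support slightly, mollify $\psi$, add $\eps|x|^2$) and of such functions lying above it, we get $IS_0^{(s)}(f)=os_0^{(s)}(f)=\int f$. At $\lam=1$, $as_1^{(s)}(h)=\int h_{(s)}^\circ$ by (\ref{def:asa-1}), and as $h$ runs over $\{h\in C_s^+(\R^n):h\ge f\}$ the dual $h_{(s)}^\circ$ runs over $\{g\in C_s^+(\R^n):g\le f_{(s)}^\circ\}$, so $OS_1^{(s)}(f)=\sup\{\int g:g\le f_{(s)}^\circ\}=\int f_{(s)}^\circ$ and dually $is_1^{(s)}(f)=\inf\{\int g:g\ge f_{(s)}^\circ\}=\int f_{(s)}^\circ$. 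At $\lam=\tfrac12$, Proposition \ref{properties}(ii) together with the functional Blaschke--Santal\'o inequality gives $as_{1/2}^{(s)}(h)\le\big(\int h\cdot\int h_{(s)}^\circ\big)^{1/2}\le\int g_e^{(s)}$ for every admissible $h\in C_s^+(\R^n)$, while $as_{1/2}^{(s)}(g_{e,r}^{(s)})=\int g_e^{(s)}$ for all $r>0$ by Lemma \ref{lemma-phie}(ii); choosing $r$ small (resp.\ large) so that $g_{e,r}^{(s)}\le f$ (resp.\ $\ge f$) shows $IS_{1/2}^{(s)}(f)=OS_{1/2}^{(s)}(f)=\int g_e^{(s)}$, and taking $f=g_e^{(s)}$ gives the two self-equalities.

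Finally, assume $\tfrac1s\in\mathbb N$ and pass to the associated body. By (\ref{motiv1}), $as_p(K_s(h))=(n+\tfrac1s)\,s^{n/2}\,\mathrm{vol}_{1/s}(B_2^{1/s})\,as_\lam^{(s)}(h)$ with $p=(n+\tfrac1s)\tfrac{\lam}{1-\lam}$, and $h\le f\iff K_s(h)\subseteq K_s(f)$ (and $\supseteq$ analogously), while $\{K_s(h):h\in C_s(\R^n)\}$ is exactly the family of convex bodies in $\R^{n}\times\R^{1/s}$ invariant under rotations of the last $\tfrac1s$ coordinates. Thus, up to the constant above, $IS_\lam^{(s)}(f)$ is the supremum of $as_p$ over the rotation-invariant convex subbodies of $K_s(f)$; by compactness (in the Hausdorff metric) and upper semicontinuity of $as_p$ this supremum is attained at some $C^*=K_s(h^*)\subseteq K_s(f)$, and the same argument — after first bounding the admissible bodies via the $L_p$-affine isoperimetric inequality in the relevant $\lam$-range — handles $OS_\lam^{(s)}$, $os_\lam^{(s)}$ and $is_\lam^{(s)}$ (the degenerate value $p=-n$, i.e.\ $\lam=-ns$, being excluded). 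That the extremiser $h^*$ lies in $C_s^+(\R^n)$ then follows from the regularity of the extremal bodies, as in \cite{GiladiHuangSchuettWerner}. I expect this last step to be the real obstacle: isolating the class $\{K_s(h)\}$ inside all convex bodies and importing the $C^2_+$-regularity of extremisers from the convex-body setting; by contrast, once the two reductions above are in place, the $0/\infty$ dichotomy and the boundary-value computations are routine.
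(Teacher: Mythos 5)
Your overall architecture matches the paper's in most places: the scaling trick with $\alpha\,g^{(s)}_{e,r}$ and Proposition \ref{properties}(i) for $IS_\lam^{(s)}=\infty$ when $\lam>\tfrac12$ and for the degenerate half of the infima, the Blaschke--Santal\'o bound plus a ball witness at $\lam=\tfrac12$, the identities (\ref{def:asa-0}) and (\ref{def:asa-1}) at the endpoints, duality via the order-reversing involution on ${\mathit C}_s^+(\R^n)$, and passage to $K_s(f)$ with upper semicontinuity of $as_p$ for the attainment claim. Where you genuinely diverge is in the two hard degenerate cases. The paper proves $OS_\lam^{(s)}(f)=\infty$ for $\lam>1$ by an explicit family $h^s_{R,\eps}(x)=R-(\eps+|x|^2)^{1/2}$ with a hands-on determinant computation, and $os_\lam^{(s)}(f)=0$ for $0<\lam<\tfrac12$ by the paraboloid spikes $h^s_{R,b}(x)=b-R|x|^2$ (splitting $s\le1$ via monotonicity in $s$ and $s>1$ directly), catching $\lam=\tfrac12$ afterwards through Proposition \ref{mono-asa}. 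You instead flatten: smoothed piecewise linear majorants/minorants of $\psi$, obtaining $IS_\lam^{(s)}=\infty$ for $\lam<0$ directly and transferring the $OS$, $\lam>1$ case by duality; this is coherent, avoids the explicit integrals, and even absorbs the $\lam=\tfrac12$ case of $os$ without the interpolation step.

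The one substantive soft spot is the phrase ``with $\lam$-affine surface area as close as desired to $as_\lam^{(s)}(f_l)$'': this is not a citable continuity fact ($as_\lam^{(s)}$ is only upper semicontinuous at best, and for general $s$ no functional semicontinuity is stated in the paper), so for $\lam\in(0,\tfrac12]$ you must actually estimate the smoothed majorant. This can be done --- the Monge--Amp\`ere mass $m$ concentrated near each vertex of the subdivision contributes on the order of $m^{\lam}\delta^{\,n(1-\lam)}$ to $\int(\det\Hess\psi)^{\lam}dx$ after mollification at scale $\delta$, the $\eps|x|^2$ term contributes $O(\eps^{n\lam})$, and the remaining factors in (\ref{def-2}) are controlled since $h^s-\langle x,\nabla h^s\rangle\ge h^s(0)\ge f^s(0)>0$ by concavity and the gradients of the PL approximant are bounded --- but it is a computation you have to supply, together with the margins needed so that mollification (which increases a convex function) preserves the order relations and so that $h^s\to0$ on $\partial S_h$; for $\lam<0$ your Fatou argument is essentially right. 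Two smaller slips: finiteness of $IS_\lam^{(s)}$, $OS_\lam^{(s)}$ inside the open ranges cannot be cited from Theorem \ref{prelprop}, which assumes $\tfrac1s\in\N$ --- what is needed (and valid for all $s$) is the Blaschke--Santal\'o bound coming from Proposition \ref{Ungleichung1}(ii), as in the paper; and for the infima $os_\lam^{(s)}$, $is_\lam^{(s)}$ a single witness $g^{(s)}_{e,r}$ gives finiteness, not positivity --- positivity on $(-\infty,0)$ resp.\ $(1,\infty)$ requires the reverse inequality $as_\lam^{(s)}(h)\ge\big(\int h\big)^{1-\lam}\big(\int h^\circ_{(s)}\big)^{\lam}$ for $\lam\notin[0,1]$ combined with Blaschke--Santal\'o. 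Your reservation about $C^2_+$-regularity of the extremiser in the attainment step is fair, but the paper's own argument there is equally brief, so this is not a defect specific to your route.
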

\vskip 2mm
\noindent
Recall that by meaningful $\lam$-ranges we mean that the quantities are not identically $0$ or $\infty$. 
We want to point out that, in contrast to the convex body case where $is_p(K)=0$, for all $p$ and all $K$, we now have that $is_\lam^{(s)}(f)$ now is not identically equal to $0$.
\par
\noindent
We prove this theorem, as well as  the other main results in Section \ref{proofs}. We list first properties of $g^{(s)}_{e,r}$.
\vskip 3mm
\begin{lem} \label{ge-prop}
We have for the relevant $\lam$-ranges that
$$
IS_\lam^{(s)}(g^{(s)}_{e,r})=  OS_\lam^{(s)}(g^{(s)}_{e,r})= os_\lam^{(s)}(g^{(s)}_{e,r})= is_\lam^{(s)}(g^{(s)}_{e,r})
=r^{\left(n+\frac{1}{s}\right)  (1-2 \lambda)}  \, \int_{\mathbb{R}^n} g^{(s)}_e dx. 
$$
In particular, 
$ IS_\lam^{(s)}(g^{(s)}_e)=  OS_\lam^{(s)}(g^{(s)}_e)= os_\lam^{(s)}(g^{(s)}_e)=  is_\lam^{(s)}(g^{(s)}_{e})= \int_{\mathbb{R}^n} g^{(s)}_e dx$. 
\end{lem}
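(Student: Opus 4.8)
The plan is to reduce to the case $r=1$ by linear and scalar invariance, observe that $g^{(s)}_e$ is its own competitor (which gives four ``easy'' one-sided bounds), and then close the four reverse bounds: two directly from the interpolation inequality of Proposition \ref{properties}(ii) combined with the functional Blaschke--Santal\'o inequality, and the other two by Legendre duality.

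First, since $g^{(s)}_{e,r}=r^{1/s}\,\bigl(g^{(s)}_e\circ\tfrac1r\,\mathrm{Id}\bigr)$ and $h\mapsto r^{1/s}\bigl(h\circ\tfrac1r\,\mathrm{Id}\bigr)$ is a bijection of ${\mathit C}_s^+(\R^n)$ that matches competitors for $g^{(s)}_{e,r}$ with competitors for $g^{(s)}_e$ (it preserves both ``$S_h\subseteq S_{g^{(s)}_e}$, $h\le g^{(s)}_e$'' and the reversed relations), I get, as in Proposition \ref{properties}(i), that $X^{(s)}_\lam(g^{(s)}_{e,r})=r^{(n+\frac1s)(1-2\lam)}\,X^{(s)}_\lam(g^{(s)}_e)$ for each of $X\in\{IS,OS,os,is\}$, so I may assume $r=1$. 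Since $g^{(s)}_e\in{\mathit C}_s^+(\R^n)$ with $(g^{(s)}_e)^\circ_{(s)}=g^{(s)}_e$, taking $h=g^{(s)}_e$ as competitor and invoking Lemma \ref{lemma-phie}(i) yields $IS^{(s)}_\lam(g^{(s)}_e),\,OS^{(s)}_\lam(g^{(s)}_e)\ge\int g^{(s)}_e$ and $is^{(s)}_\lam(g^{(s)}_e),\,os^{(s)}_\lam(g^{(s)}_e)\le\int g^{(s)}_e$; it remains to reverse each of these on its relevant range.

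For $IS$ with $\lam\in[0,\tfrac12]$: given $h\in{\mathit C}_s^+(\R^n)$ with $S_h\subseteq S_{g^{(s)}_e}$ and $h\le g^{(s)}_e$, I would combine Proposition \ref{properties}(ii) (applicable as $\lam\in[0,1]$), the identities $as^{(s)}_0(h)=\int h$, $as^{(s)}_1(h)=\int h^\circ_{(s)}$, the bound $\int h\le\int g^{(s)}_e$ (monotonicity of the integral, using $S_h\subseteq S_{g^{(s)}_e}$, and $1-2\lam\ge0$), and the functional Blaschke--Santal\'o inequality $\int h\cdot\int h^\circ_{(s)}\le(\int g^{(s)}_e)^2$ to get
\[
as^{(s)}_\lam(h)\le\Bigl(\int h\Bigr)^{1-\lam}\Bigl(\int h^\circ_{(s)}\Bigr)^{\lam}=\Bigl(\int h\Bigr)^{1-2\lam}\Bigl(\int h\cdot\int h^\circ_{(s)}\Bigr)^{\lam}\le\int_{\R^n}g^{(s)}_e .
\]
For $os$ with $\lam\in(-\infty,0]$: given $h\in{\mathit C}_s^+(\R^n)$ with $S_{g^{(s)}_e}\subseteq S_h$ and $h\ge g^{(s)}_e$, order-reversal of the $(s)$-Legendre dual gives $as^{(s)}_1(h)=\int h^\circ_{(s)}\le\int g^{(s)}_e\le\int h=as^{(s)}_0(h)$, so Proposition \ref{properties}(ii) (for $\lam\notin[0,1]$, trivially at $\lam=0$) with $\lam\le0$ yields
\[
as^{(s)}_\lam(h)\ge\bigl(as^{(s)}_0(h)\bigr)^{1-\lam}\bigl(as^{(s)}_1(h)\bigr)^{\lam}\ge\bigl(as^{(s)}_0(h)\bigr)^{1-\lam}\bigl(as^{(s)}_0(h)\bigr)^{\lam}=as^{(s)}_0(h)\ge\int_{\R^n}g^{(s)}_e .
\]
Taking supremum, resp.\ infimum, closes these two cases.

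The remaining two cases follow by duality: by Proposition \ref{properties}(iii), $as^{(s)}_\lam(h)=as^{(s)}_{1-\lam}(h^\circ_{(s)})$, and $h\mapsto h^\circ_{(s)}$ is a bijection of ${\mathit C}_s^+(\R^n)$ carrying $\{h\ge g^{(s)}_e,\ S_{g^{(s)}_e}\subseteq S_h\}$ onto $\{g\le g^{(s)}_e,\ S_g\subseteq S_{g^{(s)}_e}\}$ (using $(g^{(s)}_e)^\circ_{(s)}=g^{(s)}_e$, order-reversal of the dual, and $S_{f^\circ_{(s)}}=\tfrac1s(S_f)^\circ$), whence $OS^{(s)}_\lam(g^{(s)}_e)=IS^{(s)}_{1-\lam}(g^{(s)}_e)=\int g^{(s)}_e$ for $\lam\in[\tfrac12,1]$ and $is^{(s)}_\lam(g^{(s)}_e)=os^{(s)}_{1-\lam}(g^{(s)}_e)=\int g^{(s)}_e$ for $\lam\in[1,\infty)$; combining with the ``easy'' bounds and unwinding the $r=1$ reduction finishes the lemma. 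I expect the main obstacle to be the sharp upper bound for $IS$ near $\lam=\tfrac12$, where the functional Blaschke--Santal\'o inequality is the essential ingredient (the other ranges need only the interpolation inequality and order-reversal of the Legendre dual); a secondary point requiring care is tracking the support conditions of Definition \ref{ex-asa} as they pass through the $(s)$-Legendre duality.
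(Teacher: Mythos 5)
Your proof is correct and follows essentially the same route as the paper: the competitor $g^{(s)}_{e,r}$ itself gives one bound, and the interpolation inequality of Proposition \ref{properties}(ii) combined with the functional Blaschke--Santal\'o inequality gives the reverse. The paper writes out only the $IS$ case and treats the remaining cases ``accordingly,'' while you close $os$ via the reverse interpolation inequality and $OS$, $is$ by Legendre duality, and handle the radius $r$ by invariance up front instead of inside the estimate --- harmless variations of the same argument.
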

\begin{proof}
For $\lam \in [0,1/2]$, we have by Lemma \ref{lemma-phie},
\begin{eqnarray*}
IS_{\lam}^{(s)}(g^{(s)}_{e,r}) &=&  \sup_{h \leq g^{(s)}_{e,r}, h \in {\mathit C}^+_{s}(\R^n) } as_{\lam} ^{(s)}(h) \geq as_{\lam} ^{(s)}(g^{(s)}_{e,r}) = r^{\left(n+\frac{1}{s}\right)  (1-2 \lambda)} as_{\lambda}^{(s)} (g^{(s)}_{e})\\
&=& r^{\left(n+\frac{1}{s}\right)  (1-2 \lambda)}  \int_{\R^n}  g^{(s)}_e\, dx.
\end{eqnarray*}
On the other hand, for  $\lam \in [0, \frac{1}{2}]$,  we have by Proposition \ref{Ungleichung1}, 
\begin{eqnarray*}
IS_{\lam}^{(s)}(g^{(s)}_{e,r}) &=&  \sup_{h \leq g^{(s)}_{e,r}, h \in {\mathit C}^+_{s}(\R^n) } as_{\lam} ^{(s)}(h) \\
&\leq& \sup_{h \leq g^{(s)}_{e,r}, h \in {\mathit C}^+_{s}(\R^n)} \left(\int_{\R^n}  h\, dx \right)^{1-\lam}
 \left( \int_{\R^n}  h_{(s)}^\circ  \  dx \right)^{ \lam}.
\end{eqnarray*}
We recall now a general form of the functional Blaschke-Santal\'o inequality  \cite{FradeliziMeyer2007,Lehec2009b}.  Let $f$ be a non-negative integrable 
function on $\R^n$. There exists $z_0\in\R^n$ such that for every measurable $\rho:\R_+\to\R_+$ and every $g : \R^n\to\R_+$ satisfying
\begin{equation}\label{hypsantalo}
f (z_0+ x ) g ( y ) \le \left(\rho(\langle x,y\rangle)\right)^2 \hskip 2mm  \text{for every} \hskip 2mm  x,y \in \R^n \hskip 2mm  \text{with} \hskip 2mm  \langle x,y\rangle >0, 
\end{equation}
we have 
\begin{equation}\label{concsantalo}
\int_{\R^n} f \, dx  \int_{\R^n} g \, dx  \leq \left(\int_{\R^n}\rho(|x|^2)dx\right)^2 . 
\end{equation}
We apply it with $f=h$, $g=h_{(s)}^\circ$ and $\rho(t) = (1-s t)_+^\frac{1}{2s}$. 
A result of  \cite{Lehec2009b} states  that the  point $z_0$ in the above  inequality \eqref{hypsantalo} can be taken equal to $0$ when 
$\int _{\R^n} x fdx=0$. As by definition (\ref{Leg}) of $h_{(s)}^\circ$,
$$
h(x)\,  h_{(s)}^\circ(y) \leq \left(1- s \langle x, y \rangle \right)^\frac{1}{s}, 
$$
we therefore get 
\begin{equation}\label{Bl-Sa}
\int_{\R^n} h \, dx  \int_{\R^n} h_{(s)}^\circ \, dx   \leq \left(\int_{\R^n}   \left(1- s |x|^2\right)_+^\frac{1}{2 s}\, dx\right)^2
\end{equation}  
and hence
\begin{eqnarray*}
IS_{\lam}^{(s)}(g^{(s)}_{e,r}) &\leq&  \sup_{h \leq g^{(s)}_{e,r}, h \in {\mathit C}^+_{s}(\R^n)}  \left(\int_{\R^n}  h \,  dx \right)^{1-2 \lam} \left(\int_{\R^n}   \left(1- s |x|^2\right)_+^\frac{1}{2 s}\, dx\right)^{2 \lam}\\
&\leq&  \left(\int_{\R^n}  g^{(s)}_{e,r} \,  dx \right)^{1-2 \lam} \left(\int_{\R^n}  g^{(s)}_e dx\right)^{2 \lam}.
\end{eqnarray*}
As $\int_{\frac{r}{\sqrt{s}} B^n_2} g^{(s)}_{e,r}\, dx =  r^{n+\frac{1}{s}} \, \int_{\frac{1}{\sqrt{s}}B^n_2} g^{(s)}_{e}\, dx $, the claim follows.
\par
\noindent
The proofs for  $OS_\lam^{(s)}$, $os_\lam^{(s)}$ and $is_\lam^{(s)}$ are accordingly. 
\end{proof}
\vskip 3mm
\noindent
The next theorem establishes some basic properties of the extremal functional affine surface areas.
\vskip 2mm
\begin{theo} \label{basic}
Let $f \in C_s(\mathbb{R}^n)$. Let $\lam \in \mathbb{R}$. Then we have for the relevant $\lam$-ranges
\par
\noindent
(i) \underline{Invariance} \, For all linear maps $T:\mathbb{R}^n \to \mathbb{R}^n$ such that $\det \, T \neq 0$, 
\begin{equation*}
IS^{(s)}_\lam(f\circ T) =   |\det \, T| ^{2 \lam-1}\, IS^{(s)}_\lam(f),           \ \   \   OS ^{(s)}_\lam(f \circ T)=  |\det \, T| ^{2 \lam-1}\, OS^{(s)} _\lam(f), 
\end{equation*}
\begin{eqnarray*}
{os}^{(s)}_\lam(f \circ T)=   |\det \,T| ^{2 \lam-1}\, {os}^{(s)}_\lam(f),  \ \   \   is^{(s)} _\lam(f \circ T)=  |\det \, T| ^{2 \lam-1}\, is ^{(s)}_\lam(f).
\end{eqnarray*}
Moreover, for all $\alpha \in \mathbb{R}$, $\alpha > 0$, 
$$
IS_\lam^{(s)}( \alpha \, f ) =  \alpha^{1-2 \lam}\, IS_\lam ^{(s)}(f), \hskip 2mm OS_\lam^{(s)}( \alpha \, f ) =  \alpha^{1-2 \lam}\, OS_\lam ^{(s)}(f), 
$$
$$
os_\lam^{(s)}( \alpha \, f ) =  \alpha^{1-2 \lam}\, os_\lam ^{(s)}(f), \hskip 2mm is_\lam^{(s)}( \alpha \, f ) =  \alpha^{1-2 \lam}\, is_\lam ^{(s)}(f).
$$
\vskip 2mm
\noindent
(ii) \underline{Monotonicity in $s$} \, Let $s_1 \leq s_2$. Let $ f \in {\mathit C}_{s_2}(\R^n)$. Then we have for the relevant $\lam$-ranges, 
$$
IS_\lam^{(s_1)} (f) \geq IS_\lam^{(s_2)} (f), \, \hskip 3mm   OS_\lam^{(s_1)} (f) \geq OS_\lam^{(s_2)} (f), 
$$
$$
os_\lam^{(s_1)} (f) \leq os_\lam^{(s_2)} (f), \,  \hskip 3mm  is_\lam^{(s_1)} (f) \leq is_\lam^{(s_2)} (f).
$$
 \vskip 2mm
 \noindent
(iii) \underline{Isoperimetric inequalities}  
 \par
 $\frac{IS_\lam^{(s)}(f)}{\left(\int_{\mathbb{R}^n} f \right)^{1-2\lam}}  \leq \frac{IS_\lam^{(s)}(g^{(s)}_e)}{\left(\int_{\mathbb{R}^n} g^{(s)}_e \right)^{1-2\lam}} 
\hskip 5mm \text {for all} \, \,  \lam \in [0, \frac{1}{2}]$ 
\par
$\frac{OS_\lam^{(s)}(f)}{\left(\int_{\mathbb{R}^n} f \right)^{1-2\lam}} \leq \frac{OS_\lam^{(s)}(g^{(s)}_e)}{\left(\int_{\mathbb{R}^n} g^{(s)}_e \right)^{1-2\lam}}, \hskip 5mm \text {for all} \, \,  \lam \in [\frac{1}{2}, 1]
$
\par
$
\frac{os_\lam^{(s)}(f)}{\left(\int_{\mathbb{R}^n} f \right)^{1-2\lam}} \geq \frac{os_\lam^{(s)}(g^{(s)}_e)}{\left(\int_{\mathbb{R}^n} g^{(s)}_e \right)^{1-2\lam}}, \hskip 5mm \text {for all} \, \,  \lam \in (-\infty, 0].
$
\par
$
\frac{is_\lam^{(s)}(f)}{\left(\int_{\mathbb{R}^n} f \right)^{1-2\lam}} \geq \frac{is_\lam^{(s)}(g^{(s)}_e)}{\left(\int_{\mathbb{R}^n} g^{(s)}_e \right)^{1-2\lam}}, \hskip 5mm \text {for all} \, \,  \lam \in [1, \infty).
$
\par
\noindent
In all inequalities, equality holds trivially when $f=g^{(s)}_e$.
\par
\noindent
Additionally, equality holds in the first  inequality when $\lam=0$ or $\lam=\frac{1}{2}$ and 
equality holds in the second inequality when  $\lam=\frac{1}{2}$ and in the third inequality when $\lam=0$.
\vskip 2mm
\noindent
(iv)   \underline{Duality}   
$$
IS_\lam^{(s)}(f) = OS_{1-\lam}^{(s)}(f_{(s)}^\circ), \hskip 3mm \lam \in [0,1/2],  \hskip 7mm os_\lam^{(s)}(f) = is_{1-\lam}^{(s)}(f_{(s)}^\circ), \hskip 3mm \lam \in (- \infty,0].
$$
\vskip 2mm
\noindent
(v) \underline{Monotonicity in $\lam$}
\par
For $0 \leq \lam \leq \frac{1}{2}$,  $\left(\frac{IS_{\lambda}^{(s)} (f)}{\int_{\mathbb{R}^n} f}\right)^\frac{1}{\lam}$ is  increasing in $\lam$.
\par
For $\frac{1}{2}  \leq \lam \leq  1$,  $\left(\frac{OS_{\lambda}^{(s)} (f)}{\int_{\mathbb{R}^n} f_{(s)}^\circ}\right)^\frac{1}{1-\lam}$ is  decreasing in $\lam$.
\par
For $1 \leq \lam < \infty$,  $\left(\frac{is_{\lambda}^{(s)} (f)}{\int_{\mathbb{R}^n} f}\right)^\frac{1}{\lam}$ is  increasing in $\lam$.
\par
For $- \infty < \lam \leq  0$,  $\left(\frac{os_{\lambda}^{(s)} (f)}{\int_{\mathbb{R}^n} f_{(s)}^\circ}\right)^\frac{1}{1-\lam}$ is  decreasing in $\lam$.
\end{theo}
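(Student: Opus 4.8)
The plan is to reduce each of (i)--(v) to a property of the non-extremal affine surface area $as_\lambda^{(s)}$ already available in Propositions \ref{properties} and \ref{mono-asa}, together with the functional Blaschke--Santal\'o inequality \eqref{Bl-Sa}; only the monotonicity in $s$ will require opening up the integral representation \eqref{def-1}.

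\emph{(i) and (iv).} For fixed nonsingular $T$, the map $h\mapsto h\circ T^{-1}$ is a bijection of $\{h\in {\mathit C}_s^+(\R^n): h\le f\circ T\}$ onto $\{g\in {\mathit C}_s^+(\R^n): g\le f\}$, since composition with an invertible linear map preserves $s$-concavity, the class ${\mathit C}_s^+(\R^n)$, the inclusion of supports, and the centring $\int x\,g\,dx=0$. Combining this with $as_\lambda^{(s)}(h)=|\det T|^{2\lambda-1}as_\lambda^{(s)}(h\circ T^{-1})$ from Proposition \ref{properties}(i) and using $|\det T|^{2\lambda-1}>0$ (so the supremum or infimum passes through) gives the four invariance identities; the homogeneity statement is identical with $\alpha^{1-2\lambda}$, $\alpha>0$, in place of $|\det T|^{2\lambda-1}$. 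For (iv) the same idea is applied to the order-reversing involution $h\mapsto h_{(s)}^\circ$ of ${\mathit C}_s^+(\R^n)$: from \eqref{Leg} and $S_{h_{(s)}^\circ}=\tfrac1s(S_h)^\circ$ one checks that $h\mapsto h_{(s)}^\circ$ carries $\{h\le f\}$ bijectively onto $\{k\ge f_{(s)}^\circ\}$ and $\{h\ge f\}$ onto $\{k\le f_{(s)}^\circ\}$; together with $as_\lambda^{(s)}(h)=as_{1-\lambda}^{(s)}(h_{(s)}^\circ)$ (Proposition \ref{properties}(iii)) and a re-indexing of the extrema this yields $IS_\lambda^{(s)}(f)=OS_{1-\lambda}^{(s)}(f_{(s)}^\circ)$ and $os_\lambda^{(s)}(f)=is_{1-\lambda}^{(s)}(f_{(s)}^\circ)$ on the matching $\lambda$-ranges.

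\emph{(iii) and (v).} The isoperimetric inequalities come from Proposition \ref{properties}(ii) after eliminating the dual volume by \eqref{Bl-Sa}. For $\lambda\in[0,\tfrac12]$ and $h\le f$, $h\in {\mathit C}_s^+(\R^n)$, Proposition \ref{properties}(ii) gives $as_\lambda^{(s)}(h)\le \big(\int h\big)^{1-\lambda}\big(\int h_{(s)}^\circ\big)^{\lambda}=\big(\int h\big)^{1-2\lambda}\big(\int h\,\int h_{(s)}^\circ\big)^{\lambda}$; since $h$ is centred, \eqref{Bl-Sa} bounds the last product by $\big(\int g^{(s)}_e\big)^2$, and since $1-2\lambda\ge0$ and $\int h\le\int f$ one gets $as_\lambda^{(s)}(h)\le\big(\int f\big)^{1-2\lambda}\big(\int g^{(s)}_e\big)^{2\lambda}$; the supremum over $h$, together with $IS_{1/2}^{(s)}(g^{(s)}_e)=\int g^{(s)}_e$ (Theorem \ref{ranges}), gives the first inequality, the equalities at $\lambda\in\{0,\tfrac12\}$ being read off from Theorem \ref{ranges}. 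The $OS$ case is the identical computation with $\int h\ge\int f$ and $1-2\lambda\le0$; the $os$ case uses the reverse inequality of Proposition \ref{properties}(ii) (valid for $\lambda\notin[0,1]$) and that $t\mapsto t^{\lambda}$ is decreasing for $\lambda<0$; and the $is$ case follows from the $os$ case via the duality (iv). For (v), fix $0<\lambda_1<\lambda_2\le\tfrac12$ and $h\le f$, $h\in {\mathit C}_s^+(\R^n)$. By Proposition \ref{mono-asa}, $\big(as_{\lambda_2}^{(s)}(h)/\!\int h\big)^{1/\lambda_2}\ge \big(as_{\lambda_1}^{(s)}(h)/\!\int h\big)^{1/\lambda_1}$; rearranging and using $1/\lambda_2-1/\lambda_1<0$ with $\int h\le \int f$ turns this into $\big(as_{\lambda_2}^{(s)}(h)/\!\int f\big)^{1/\lambda_2}\ge \big(as_{\lambda_1}^{(s)}(h)/\!\int f\big)^{1/\lambda_1}$, an inequality between functions of $h$ over the \emph{same} index set. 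Taking the supremum, which commutes with the increasing map $x\mapsto x^{1/\lambda}$, shows $\lambda\mapsto (IS_\lambda^{(s)}(f)/\!\int f)^{1/\lambda}$ is increasing on $[0,\tfrac12]$; replacing $\sup$ by $\inf$ gives the statement for $is_\lambda^{(s)}$ on $[1,\infty)$, and the $OS$ and $os$ statements then follow from (iv) (with $\mu=1-\lambda$ reversing the direction of monotonicity, and $(g^{(s)}_e)_{(s)}^\circ=g^{(s)}_e$).

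\emph{(ii), and the main obstacle.} For $s_1\le s_2$ one first establishes ${\mathit C}_{s_2}^+(\R^n)\subseteq {\mathit C}_{s_1}^+(\R^n)$ (beyond the inclusion \eqref{contain}): if $u=h^{s_2}$ is concave, $C^2$, has invertible Hessian and vanishes on $\partial S_h$, then $h^{s_1}=u^{s_1/s_2}$ is again concave and $C^2$ there and vanishes on $\partial S_h$, and $\det\nabla^2(u^{s_1/s_2})\ne0$ because the rank-one correction in $\nabla^2(u^{s_1/s_2})=\tfrac{s_1}{s_2}u^{s_1/s_2-2}\big(u\,\nabla^2u+(\tfrac{s_1}{s_2}-1)\nabla u\otimes\nabla u\big)$ has a definite sign relative to the negative definite $u\,\nabla^2u$, so the matrix determinant lemma keeps the determinant away from $0$. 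Granting this, $IS_\lambda^{(s_1)}(f)=\sup_{h\le f,\,h\in {\mathit C}_{s_1}^+}as_\lambda^{(s_1)}(h)\ge\sup_{h\le f,\,h\in {\mathit C}_{s_2}^+}as_\lambda^{(s_1)}(h)$, and similarly for the other three quantities, so the theorem reduces to the pointwise monotonicity of $as_\lambda^{(s)}(h)$ in $s$ (in the appropriate direction on each relevant $\lambda$-range, the directions being opposite for the $\lambda>0$ and $\lambda<0$ ranges, with equality at $\lambda=0$). I expect this pointwise comparison to be the hard part: unlike everything above it cannot be done by formal manipulation of suprema and H\"older, and has to be extracted directly from the integral representation \eqref{def-1} as a function of $s$ --- alternatively, when $1/s\in\N$, from a monotonicity of the $L_p$-affine surface area of the associated bodies $K_s(h)$ under the change of ambient dimension, together with a density argument in $s$.
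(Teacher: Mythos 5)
Most of your proposal is exactly the paper's argument: (i) and (iv) by re-indexing the extrema under $h\mapsto h\circ T^{-1}$, respectively under the order-reversing involution $h\mapsto h_{(s)}^\circ$ of ${\mathit C}_s^+(\R^n)$, combined with Proposition \ref{properties} (i), (iii); the $IS$, $OS$ and $os$ parts of (iii) by Proposition \ref{properties} (ii) together with the Santal\'o bound (\ref{Bl-Sa}); and (v) by Proposition \ref{mono-asa} followed by duality --- all as in Section \ref{proofs}. The first genuine problem is (ii). You reduce it to the inclusion ${\mathit C}_{s_2}^+(\R^n)\subseteq{\mathit C}_{s_1}^+(\R^n)$ \emph{plus} a pointwise monotonicity in $s$ of $as_\lam^{(s)}(h)$ for fixed $h$, and you leave that second ingredient unproved, calling it the main obstacle. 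The paper's proof of (ii) consists of the first ingredient only: it passes directly from the inclusion of the classes to $\sup_{h\le f,\,h\in{\mathit C}_{s_1}^+}as_\lam^{(s_1)}(h)\ge\sup_{h\le f,\,h\in{\mathit C}_{s_2}^+}as_\lam^{(s_2)}(h)$, and neither proves nor cites any comparison between $as_\lam^{(s_1)}(h)$ and $as_\lam^{(s_2)}(h)$. You have correctly isolated the step that this comparison of suprema of two \emph{different} functionals tacitly requires, but your proposal does not close it, so (ii) is not established by your argument; it cannot be deferred as a routine verification.

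The second problem is your treatment of the fourth inequality in (iii). Deducing it from the $os$ case via (iv) gives $is_\lam^{(s)}(f)=os_{1-\lam}^{(s)}(f_{(s)}^\circ)\ge\bigl(\int_{\R^n}f_{(s)}^\circ\bigr)^{2\lam-1}\bigl(\int_{\R^n}g_e^{(s)}\bigr)^{2-2\lam}$, a bound normalized by $\int f_{(s)}^\circ$; converting it into the stated bound $\bigl(\int_{\R^n}f\bigr)^{1-2\lam}\bigl(\int_{\R^n}g_e^{(s)}\bigr)^{2\lam}$ would require $\int_{\R^n}f\,\int_{\R^n}f_{(s)}^\circ\ge\bigl(\int_{\R^n}g_e^{(s)}\bigr)^2$, i.e.\ a reverse Santal\'o inequality with constant one, which is false in general (it is the opposite of (\ref{Bl-Sa})). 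Indeed at $\lam=1$, where $is_1^{(s)}(f)=\int_{\R^n}f_{(s)}^\circ$ by Theorem \ref{ranges}, the displayed inequality is literally $\int f\int f_{(s)}^\circ\ge(\int g_e^{(s)})^2$, so it can only hold in the equality case of (\ref{Bl-Sa}); the direct route through Proposition \ref{properties} (ii) with $h\le f$ (which is what the paper's ``the proofs \dots are accordingly'' points to) likewise only yields the $\int f_{(s)}^\circ$-normalized version. So your duality shortcut does not prove the $is$ inequality as stated --- you should either establish it in the $\int f_{(s)}^\circ$ normalization or flag the discrepancy, but the reduction as written does not go through.
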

\vskip 2mm
\noindent
{\bf Remark.}
The following  modified definition of the extremal affine surface areas for bodies, 
$$
IS_p(K) = \max_{L\subset K, L \, \text{is}\,  C^2_+} as_p(L)
$$
- and similarly for the  others -  results in $is_p(K)>0$ and gives a duality relation for all extremal affine surfaces areas for their relevant ranges.
With the original definition, we get duality only for $IS_p$ and $OS_p$ for their relevant ranges. 
\vskip 4mm
\noindent
\subsection{The ``size" of  the extremal affine surface areas.}

From now on, we often write in short $n_s$ instead of $n+\frac{1}{s}$.
\vskip 3mm
\noindent
\subsubsection{The cases: $\frac{1}{s} \in \mathbb{N}$ and $0 < s \leq 1$.} \label{SubS:sinN}

We first give estimates when $\frac{1}{s} \in \mathbb{N}$. We note that these estimates do not follow directly from passing to $K_s(f)$ as our definitions
of extremal affine surface areas differ. 
A consequence of these estimates are estimates for general $s \leq 1$.  When $s=1$, we also recover  from them the results for convex bodies.
The estimates involve the isotropy constant $L_f$ of a function $f$. It's definition  is given in (\ref{Lphi}).
\vskip 2mm
\begin{theo}\label{prelprop}
Let $s>0$ be such that $\frac{1}{s} \in \mathbb{N}$ and let $f \in  {\mathit C}_{s}(\R^n)$. 
\par
\noindent
(i) There is an absolute constant $C>0$ such that for all $\lam \in [0, \frac{1}{2}]$, 
$$ \max \left\{\frac{ C^{n_s \lam}}{n_s^{5/6-\lam} \, L_{f}^{2n\lam}}, \, \, 
\frac{1}{n_s^{n_s(1-2 \lam)}}                                                            \right\} \,     \frac{IS^{(s)}_\lam(g^{(s)}_e)}{\left(\int_{\mathbb{R}^n} g^{(s)}_e dx\right)^{1-2\lam}} 
\leq 
\frac{IS_\lam^{(s)} (f)}{\left(\int_{\mathbb{R}^n} f \right)^{1-2\lam}}
\leq \frac{IS_\lam(g^{(s)}_e)}{\left(\int_{\mathbb{R}^n} g^{(s)}_e dx\right)^{1-2\lam}}.$$
\par
\noindent
Equality holds on the right hand side if $\lam=0$, $\lam=\frac{1}{2}$, and if  $f=g^{(s)}_e$.
\vskip 2mm
\noindent
(ii) There is an absolute  constant $c$ such that for $\lambda \in [\frac{1}{2},1]$, 
\begin{align*}
 \max \left\{ \frac{c^{n_s( 2 \lam-1)}}{n_s^{\lam-1/6} \,    L_{f^\circ_{(s)}}^{2n(1-\lam)}},  \, \,   n_s^{n_s (1 -2 \lam)}\right\} \frac{OS_\lambda^{(s)} (g^{(s)}_e)}{\left(\int_{\mathbb{R}^n}g^{(s)}_e dx\right)^{1-2\lambda}}
 \leq \frac{OS_\lambda^{(s)}(f)}{\left( \int_{\mathbb{R}^n} f dx \right)^{1-2\lambda}}
\leq  \frac{OS_\lambda^{(s)} (g^{(s)}_e)}{\left(\int_{\mathbb{R}^n}g^{(s)}_e dx\right)^{1-2\lambda}}.
\end{align*}
\par
\noindent
Equality holds on the right hand side if  $\lam=\frac{1}{2}$ and if  $f=g^{(s)}_e$.
\vskip 2mm
\noindent
When $f$ is even, we can replace  $n_s^{n_s (1 -2 \lam)}$ with $n_s^{n_s  \frac{1-2 \lam}{2} }$ in (i) and (ii).
\end{theo}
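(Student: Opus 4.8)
The plan is to get the two upper bounds from the functional isoperimetric inequalities of Theorem~\ref{basic}(iii), and to get each of the two lower bounds inside the $\max$ by exhibiting an explicit admissible competitor in the supremum defining $IS^{(s)}_\lam$ (resp.\ $OS^{(s)}_\lam$). Throughout I use that the normalized quantity $IS^{(s)}_\lam(f)/\big(\int_{\R^n}f\,dx\big)^{1-2\lam}$ is invariant under $f\mapsto f\circ T$ with $\det T\neq 0$ and under $f\mapsto\alpha f$ with $\alpha>0$: this follows from Theorem~\ref{basic}(i) together with $\int f\circ T\,dx=|\det T|^{-1}\int f\,dx$, and lets me place $f$ in whatever position is convenient for a given estimate (similarly for $OS^{(s)}_\lam$). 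The upper bounds in (i) and (ii) are then precisely Theorem~\ref{basic}(iii), together with $IS^{(s)}_\lam(g^{(s)}_e)=OS^{(s)}_\lam(g^{(s)}_e)=\int_{\R^n}g^{(s)}_e\,dx$ from Lemma~\ref{ge-prop}; the claimed equality cases on the right are exactly those recorded in Theorem~\ref{basic}(iii), Lemma~\ref{ge-prop} and Theorem~\ref{ranges} (namely $\lam=0,\tfrac12$ for $IS$, $\lam=\tfrac12$ for $OS$, and $f=g^{(s)}_e$).

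For the lower bound in (i) carrying the factor $n_s^{-n_s(1-2\lam)}$ I would argue through the associated body. Since $\tfrac1s\in\N$, set $K:=K_s(f)\subset\R^{n_s}$; by (\ref{vol-Ks}) and Lemma~\ref{lemma-phie}(i), $|K|=V\int f\,dx$ and $|B_2^{n_s}|=V\int g^{(s)}_e\,dx$ with $V=s^{n/2}\,\mathrm{vol}_{1/s}(B_2^{1/s})$. The body $K$ is invariant under the group rotating the last $\tfrac1s$ coordinates, hence so is its (unique) John ellipsoid $\mathcal E$; such an invariant ellipsoid has each $x$-fibre a Euclidean ball whose radius is the square root of a concave quadratic in $x$, so $\mathcal E=K_s(h_0)$ with $h_0=\alpha\,(g^{(s)}_e\circ A)$ for some affine $A$, and then $h_0\in {\mathit C}_s^+(\R^n)$ with $h_0\le f$ (no smoothing needed). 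By (\ref{motiv1}) with $p=n_s\tfrac\lam{1-\lam}$, so that $\tfrac{n_s-p}{n_s+p}=1-2\lam$, and the behaviour of $as_p$ under linear maps, $as^{(s)}_\lam(h_0)=as_p(\mathcal E)/(n_sV)=(|B_2^{n_s}|/V)\,(|\mathcal E|/|B_2^{n_s}|)^{1-2\lam}$; bounding $|\mathcal E|\ge n_s^{-n_s}|K|$ by John's theorem — and $|\mathcal E|\ge n_s^{-n_s/2}|K|$ when $f$ is even, since then $K$ is origin-symmetric — and inserting the volume identities gives $IS^{(s)}_\lam(f)\ge as^{(s)}_\lam(h_0)\ge n_s^{-n_s(1-2\lam)}\big(\int g^{(s)}_e\,dx\big)^{2\lam}\big(\int f\,dx\big)^{1-2\lam}$, the asserted term; the even-case improvement is exactly the replacement of $n_s^{n_s}$ by $n_s^{n_s/2}$.

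The heart of the matter is the lower bound in (i) carrying the factor $n_s^{\lam-5/6}L_f^{-2n\lam}$. Here I would put $f$ in isotropic position (which defines $L_f$), pass again to $K=K_s(f)$, which is then isotropic up to absolute constants with $L_K$ comparable to $L_f$, and invoke the thin-shell estimate of Lee--Vempala \cite{LeeVempala17}, Gu\'edon--Milman \cite{GuedonMilman} and Paouris \cite{Paouris}: for $X$ uniform on $K$, the norm $|X|$ concentrates in the spherical shell of radius $\asymp\sqrt{n_s}\,L_K$ and relative width $\asymp n_s^{-1/6}$. Mimicking the lower-bound construction of \cite{GiladiHuangSchuettWerner}, one builds an inscribed body $L\subseteq K$ that agrees with a sphere of that radius along a definite proportion of its boundary: concretely a (random) intersection of balls $B_2^{n_s}((a_i,0),\rho)$ with $\rho\asymp\sqrt{n_s}\,L_K$ and centres $a_i$ in the hyperplane $\R^n\times\{0\}$, so that $L$ is automatically invariant under the rotations of the last $\tfrac1s$ coordinates and hence $L=K_s(h)$ with $h=\min_i g^{(s)}_{e,\rho}(\,\cdot\,-a_i/\sqrt s)$, which is $s$-concave (a minimum of translates of $g^{(s)}_{e,\rho}$ is $s$-concave, since $t\mapsto t^s$ commutes with $\min$ and a minimum of concave functions is concave). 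After a small smoothing to land in ${\mathit C}_s^+(\R^n)$, with negligible loss in $as^{(s)}_\lam$, one has $h\le f$, and the affine-surface-area estimates for such ball-bodies give a lower bound for $as_p(L)$ in terms of the affine surface area of a ball of radius $\rho(1-cn_s^{-1/6})$; translating back through (\ref{motiv1}) and re-expressing $\rho,|K|$ via $L_f,\int f\,dx,\int g^{(s)}_e\,dx$ yields the stated factor, with $5/6=1-\tfrac16$ the exact cost of the thin-shell width $n_s^{-1/6}$. \textbf{This construction is the main obstacle:} one must keep $L$ admissible — i.e.\ invariant under the rotation group, equivalently of the form $K_s(h)$ for an $s$-concave $h$ — while it is small enough to sit inside the (far from round) isotropic body $K$ yet still carries enough boundary at curvature scale $\rho^{-(n_s-1)}$; it is precisely this constraint that forces one to adapt, rather than quote, the argument of \cite{GiladiHuangSchuettWerner} (cf.\ the remark preceding Theorem~\ref{prelprop}).

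Finally, (ii) follows from (i) and the duality formula Theorem~\ref{basic}(iv). For $\lam\in[\tfrac12,1]$ one has $OS^{(s)}_\lam(f)=IS^{(s)}_{1-\lam}(f^\circ_{(s)})$ with $1-\lam\in[0,\tfrac12]$, so applying (i) to $f^\circ_{(s)}$ gives lower bounds featuring $L_{f^\circ_{(s)}}$ and $\int f^\circ_{(s)}\,dx$; converting the normalization from $\int f^\circ_{(s)}\,dx$ to $\int f\,dx$ introduces the factor $\big(\int f\,dx\int f^\circ_{(s)}\,dx\big)^{2\lam-1}$, which by the functional Blaschke--Santal\'o inequality (\ref{Bl-Sa}) and a reverse (Bourgain--Milman-type) form of it, applied to $K_s(f)$, lies between $c^{n_s(2\lam-1)}\big(\int g^{(s)}_e\,dx\big)^{2(2\lam-1)}$ and $\big(\int g^{(s)}_e\,dx\big)^{2(2\lam-1)}$; collecting the absolute constants (the one from (i), which can be taken $\ge1$, and the reverse-Santal\'o constant $c\le1$) produces the term with $L_{f^\circ_{(s)}}$ in (ii). The remaining term $n_s^{n_s(1-2\lam)}$ in (ii) is obtained exactly as in the second paragraph but with the \emph{L\"owner} (minimal-volume) ellipsoid of $K_s(f)$ in place of the John ellipsoid — here $1-2\lam\le0$, so the volume bound $|\mathcal E_{\mathrm{L}}|\le n_s^{n_s}|K|$ is used in the favourable direction — and improves to $n_s^{n_s(1-2\lam)/2}$ when $f$ is even; the upper bound in (ii) and its equality cases are once more Theorem~\ref{basic}(iii), Lemma~\ref{ge-prop} and Theorem~\ref{ranges}.
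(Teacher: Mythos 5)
Most of your architecture coincides with the paper's: the upper bounds and equality cases are Theorem \ref{basic}(iii) with Lemma \ref{ge-prop}; the term $n_s^{n_s(1-2\lam)}$ in (ii) comes from the L\"owner ellipsoid of $K_s(f)$, which is invariant under the rotations of the last $\tfrac1s$ coordinates and hence of the form $K_s(h)$ with $h\geq f$ admissible (this is exactly the paper's inequality (\ref{2-OS})); and the $L_{f^\circ_{(s)}}$ term in (ii) is obtained, as in the paper, from (i) by the duality formula of Theorem \ref{basic}(iv) together with the Bourgain--Milman inequality applied to $K_s(f)$. Your derivation of the term $n_s^{-n_s(1-2\lam)}$ in (i) via the John ellipsoid of $K_s(f)$ is a legitimate and slightly more direct variant: the paper instead obtains this term from (\ref{2-OS}) by duality and the functional Blaschke--Santal\'o inequality; both give the same factor and the same even-case improvement, and your observation that the rotational symmetry forces the extremal ellipsoid to be of the form $K_s(\alpha\, g^{(s)}_e\circ A)$ is correct.

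The genuine gap is in the main lower bound of (i), which you yourself flag as the obstacle. First, the thin-shell estimate of \cite{GuedonMilman} cannot produce an inscribed competitor whose boundary is spherical ``along a definite proportion'': having half the mass of $K_s(f)$ in a shell of relative width $n_s^{-1/6}$ is compatible with the set $O=\{\theta:\rho_{K_s(f)}(\theta)>R\}$ having measure as small as $e^{-c\,n_s^{5/6}}$ for every $R$ in that shell, since the shell's radial ratio raised to the power $n_s$ is $e^{\Theta(n_s^{5/6})}$; a constant proportion would in any case contradict the factor $n_s^{5/6-\lam}$ you then assert. The missing quantitative step is exactly what the paper supplies: split the thin shell into $\sim n_s^{5/6}$ dyadic sub-shells of ratio $2^{1/n_s}$, pick by pigeonhole a sub-shell of volume at least $c\,n_s^{-5/6}$ (inequality (\ref{eq:Livolume})), take as competitor the single body $K_s(h)=K_s(f)\cap R\,B_2^{n_s}$ with $R$ the inner radius of that sub-shell --- a centered ball, so the rotational invariance you worry about is automatic and no (random) intersection of off-center balls is needed --- and convert the cone-volume bound into a surface-measure bound via (\ref{Surf-Vol}); this is where the exponent $5/6-\lam$ actually arises. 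Your sketch neither chooses the radius nor justifies any lower bound on the spherical part of the boundary, so the central estimate is asserted rather than proved. Second, ``$L_{K_s(f)}$ comparable to $L_f$ up to absolute constants'' cannot simply be invoked: the two constants are related by Lemma \ref{lemLphi}, and passing from $L_{K_s(f)}^{2n_s\lam}$ to $L_f^{2n\lam}$ with only a $C^{n_s\lam}$ loss additionally uses $\|f\|_\infty\geq c^n$ in isotropic position, the normalization $0\leq f\leq 1$, and the estimate (\ref{Est1}); without this bookkeeping the bound in the stated form (with $L_f^{2n\lam}$ and an absolute constant to the power $n_s\lam$) does not follow.
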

\vskip 2mm
\noindent
{\bf Remarks.}
\par
\noindent
1. It is conjectured that the isotropy constant $L_K$ of a convex body $K$ (see (\ref{LK}) for the definition)  in $\mathbb{R}^n$ is a constant independent of the dimension $n$.
For decades the best known bounds have been $L_K \leq C n^{1/4} \log n$,  proved by Bourgain \cite{Bourgain1991, Bourgain2002}, 
and $L_K \leq C n^{1/4}$, proved by Klartag  \cite{Klartag2006}.  Here, $C$ is an absolute constant. In recent years, based on results by Eldan \cite{Eldan}, there has been much progress on the conjecture due to works by  Chen \cite{Chen}, 
Klartag and Lehec \cite{KlartagLehec}
and Jambulapati, Lee and Vempala \cite{JamLeeVemp}.
The most recent progress is  by Klartag \cite{Klartag2023} who shows that  with a universal constant $C>0$, $L_K \leq C \left( \log n\right) ^\frac{1}{2}$.
Similar remarks hold for $L_f$, see the beginning of subsection  \ref{subsection5.3}.
\vskip 2mm
\noindent
2. We get a slightly better exponent, $n_s^{5/6 - \lam}$,  instead of $n_s^{5/6}$ compared to \cite{GiladiHuangSchuettWerner}. However, a careful analysis of the 
proof of \cite{GiladiHuangSchuettWerner}
shows that also there one obtains $n^{5/6 - p/(n+p)}$.
\vskip 2mm
\noindent
3. If $\lam=1/2$, then the maximum in the lower bound of (i)  and (ii) is achieved for the second term and is $1$. If $\lam= 0$, the maximum in (i) is achieved for the 
first  term and  is equal to $n_s^{-5/6}$. 
If $\lam= 1$, the maximum  in (ii) is achieved for the 
first term and it is equal to $c^{n_s}\, n_s^{-5/6}$. 
\vskip 3mm
\begin{prop}\label{prelprop1}
Let $s>0$ be such that $\frac{1}{s} \in \mathbb{N}$ and let $f \in  {\mathit C}_{s}(\R^n)$. 
\vskip 2mm
\noindent
(i) 
For $\lambda \in (- \infty,0]$, 
\begin{eqnarray*}
\frac{os_\lam^{(s)} (g^{(s)}_e) }{\left(\int_{\mathbb{R}^n}  g^{(s)}_e\, dx \right) ^{1-2 \lam}}\leq \frac{os_\lambda^{(s)}(f)}{\left( \int_{\mathbb{R}^n} f dx \right)^{1-2\lambda}} \leq n_s^{n_s (1-2 \lam)} \frac{os_\lam^{(s)} (g^{(s)}_e) }{\left(\int_{\mathbb{R}^n}  g^{(s)}_e\, dx \right) ^{1-2 \lam}}.
\end{eqnarray*}
\vskip 2mm
\noindent
(ii) Let $\lambda \in [1, \infty)$. Then
\begin{eqnarray*}
\frac{is_\lam^{(s)} (g^{(s)}_e) }{\left(\int_{\mathbb{R}^n}  g^{(s)}_e\, dx \right) ^{1-2 \lam}}\leq \frac{is_\lambda^{(s)}(f)}{\left( \int_{\mathbb{R}^n} f dx \right)^{1-2\lambda}} \leq \frac{1}{n_s^{n_s (1-2 \lam)} }\frac{is_\lam^{(s)} (g^{(s)}_e) }{\left(\int_{\mathbb{R}^n}  g^{(s)}_e\, dx \right) ^{1-2 \lam}}.
\end{eqnarray*}
\vskip 2mm
\noindent
Equality holds on the left hand sides of (i) and (ii)   if  $f=g^{(s)}_e$ and on the left hand side of (i) when $\lam=0$. 
When $f$ is even, we can replace  $n_s^{n_s (1 -2 \lam)}$ with $n_s^{n_s  \frac{1-2 \lam}{2} }$.
\end{prop}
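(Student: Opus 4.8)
The plan is to separate the lower and upper estimates, and to deduce part (ii) from part (i) by duality.

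\emph{Lower bounds.} The left-hand inequalities in (i) and (ii) are nothing but the third and fourth isoperimetric inequalities of Theorem \ref{basic}(iii): for $\lam\le 0$ one has $os_\lam^{(s)}(f)/(\int_{\R^n}f)^{1-2\lam}\ge os_\lam^{(s)}(g_e^{(s)})/(\int_{\R^n}g_e^{(s)})^{1-2\lam}$, and for $\lam\ge 1$ one has $is_\lam^{(s)}(f)/(\int_{\R^n}f)^{1-2\lam}\ge is_\lam^{(s)}(g_e^{(s)})/(\int_{\R^n}g_e^{(s)})^{1-2\lam}$. Equality for $f=g_e^{(s)}$ is trivial, and equality on the left of (i) at $\lam=0$ holds since $os_0^{(s)}(f)=\int_{\R^n}f$ by Theorem \ref{ranges}. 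So only the upper bounds require work.

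\emph{Upper bound in (i).} As $os_\lam^{(s)}(f)=\inf_{h\ge f}as_\lam^{(s)}(h)$, it suffices to exhibit one admissible competitor, i.e. some $h\in C_s^+(\R^n)$ with $S_h\supseteq S_f$ and $h\ge f$, for which $as_\lam^{(s)}(h)$ does not exceed the claimed bound; the candidate I would use is the linear image of $g_e^{(s)}$ coming from the L\"owner ellipsoid. Since $\tfrac1s\in\N$, pass to the associated body $K:=K_s(f)\subset\R^{n+\frac1s}$. I would use two features of $K$: it is invariant under rotations of the last $\tfrac1s$ coordinates, and — because the $y$-fibre of $K$ over $x$ has $\tfrac1s$-volume proportional to $f(x/\sqrt s)$ and $\int_{S_f}xf\,dx=0$ — its centroid is at the origin. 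Let $\mathcal E$ be the L\"owner (minimal-volume circumscribed) ellipsoid of $K$; by uniqueness it inherits the rotational symmetry in the last $\tfrac1s$ coordinates, so $\mathcal E=K_s(h)$ for a suitable $h=c\,g_e^{(s)}\circ T\in C_s^+(\R^n)$ (after a translation in the first $n$ coordinates), and $K_s(h)\supseteq K_s(f)$ forces $h\ge f$ and $S_h\supseteq S_f$, so $h$ is admissible and $os_\lam^{(s)}(f)\le as_\lam^{(s)}(h)$. John's theorem gives $\tfrac1{n_s}(\mathcal E-z_0)+z_0\subseteq K\subseteq\mathcal E$ (with $\tfrac1{\sqrt{n_s}}$ instead of $\tfrac1{n_s}$, and $z_0=0$, when $K$, equivalently $f$, is origin symmetric), hence $\operatorname{vol}(\mathcal E)\le n_s^{\,n_s}\operatorname{vol}(K)$, which by \eqref{vol-Ks} is exactly $\int_{\R^n}h\le n_s^{\,n_s}\int_{\R^n}f$ (respectively $\le n_s^{\,n_s/2}\int_{\R^n}f$). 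Finally, for a centered linear image of $g_e^{(s)}$, Proposition \ref{Ungleichung1}(i) and Lemma \ref{lemma-phie} give $as_\lam^{(s)}(h)=(\int_{\R^n}h)^{1-2\lam}(\int_{\R^n}g_e^{(s)})^{2\lam}$; combining this with the volume estimate and $1-2\lam>0$ yields $os_\lam^{(s)}(f)\le n_s^{\,n_s(1-2\lam)}(\int_{\R^n}f)^{1-2\lam}(\int_{\R^n}g_e^{(s)})^{2\lam}$, which is the assertion because $os_\lam^{(s)}(g_e^{(s)})/(\int_{\R^n}g_e^{(s)})^{1-2\lam}=(\int_{\R^n}g_e^{(s)})^{2\lam}$ by Lemma \ref{ge-prop}; the even case uses $n_s^{\,n_s/2}$ throughout, giving the stated improvement.

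\emph{Upper bound in (ii), and the main obstacle.} Part (ii) I would obtain from part (i) by duality: Theorem \ref{basic}(iv) gives $is_\lam^{(s)}(f)=os_{1-\lam}^{(s)}(f_{(s)}^\circ)$ for $\lam\ge 1$, and the upper bound of (i) applied to $f_{(s)}^\circ$ at parameter $1-\lam$ gives $is_\lam^{(s)}(f)\le n_s^{\,n_s(2\lam-1)}(\int_{\R^n}f_{(s)}^\circ)^{2\lam-1}(\int_{\R^n}g_e^{(s)})^{2-2\lam}$; the Blaschke--Santal\'o inequality \eqref{Bl-Sa}, $\int_{\R^n}f\cdot\int_{\R^n}f_{(s)}^\circ\le(\int_{\R^n}g_e^{(s)})^2$, then turns the right-hand side into $n_s^{\,n_s(2\lam-1)}(\int_{\R^n}f)^{1-2\lam}(\int_{\R^n}g_e^{(s)})^{2\lam}$, which is the claim; alternatively one can repeat the previous paragraph with the John (maximal inscribed) ellipsoid of $K$ in place of the L\"owner ellipsoid. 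The delicate point — and where I expect the real work to be — is in the construction of the competitor $h$ in part (i): since $as_\lam^{(s)}$ is not translation invariant for $\lam\neq 0,\tfrac12$, one needs the extremal ellipsoid to be centered ($z_0=0$) before the clean identity $as_\lam^{(s)}(h)=(\int_{\R^n}h)^{1-2\lam}(\int_{\R^n}g_e^{(s)})^{2\lam}$ applies. For even $f$ this is automatic and the argument above is complete. For general $f$ the L\"owner ellipsoid of $K_s(f)$ need not be centered at the origin even though $K_s(f)$ is, so one must either replace it by the minimal centered ellipsoid containing $f$ (invoking $-K\subseteq n_s K$ from the Minkowski measure of symmetry of a centroid-at-origin body) or dominate the off-center competitor pointwise by a centered dilate, in either case checking that the extra factor is absorbed into $n_s^{\,n_s(1-2\lam)}$; this bookkeeping, not any new idea, is the crux.
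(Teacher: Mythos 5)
Your route is the paper's: the lower bounds and the equality cases are quoted from Theorem \ref{basic} (iii) exactly as in the paper; the upper bound in (i) is obtained from the competitor $h$ with $K_s(h)=L(K_s(f))$, the containment (\ref{loewner}), and the exact value of $as_\lam^{(s)}$ on scaled linear images of $g^{(s)}_e$ (Proposition \ref{Ungleichung1} (i), Lemmas \ref{lemma-phie} and \ref{ge-prop}); part (ii) is reduced to (i) by duality, which is also how the paper disposes of it (it refers back to the duality-based estimate in the proof of Theorem \ref{prelprop} (ii)), your extra use of Blaschke--Santal\'o to trade $\int f^\circ_{(s)}$ for $\int f$ being the same bookkeeping that occurs there; and the even-case improvement via (\ref{loewner2}) matches as well.

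The one point of divergence is precisely what you call the crux. The paper does no centring bookkeeping: it takes $h$ with $K_s(h)=L(K_s(f))$, applies (\ref{loewner}) and evaluates $as_p$ of the ellipsoid by affine invariance, i.e.\ it treats the L\"owner ellipsoid as origin-centred (by the rotational symmetry of $K_s(f)$ in the last $\frac{1}{s}$ coordinates its centre does lie in $\{y=0\}$, but for non-even $f$ not necessarily at the origin, and $as_p$ is not translation invariant for general $p$). So your scruple identifies a step the paper glosses over rather than one it resolves differently, and, as written, your own argument does not close the general (non-even) case of the upper bound in (i): of the two repairs you sketch, dominating the off-centre ellipsoid $\{q(z-z_0)\le 1\}$ by a centred one only gives $\{q\le 4\}$ (since $0\in K_s(f)$ forces $q(z_0)\le 1$), i.e.\ a factor $2^{n_s(1-2\lam)}$, which yields $(2n_s)^{n_s(1-2\lam)}$ and not the stated constant $n_s^{n_s(1-2\lam)}$; and the route through a centred ellipsoid containing $\mathrm{conv}\bigl(K_s(f)\cup -K_s(f)\bigr)$ (via $-K\subseteq n_s K$) requires a volume estimate $\le n_s^{n_s}\,|K_s(f)|$ that you do not verify and that does not follow from the naive bounds in low dimensions. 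Either carry out such a verification, or do as the paper does and apply the ellipsoid identity directly to $L(K_s(f))$ -- which is exactly the step you declined to take for free.
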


\vskip 3mm
\noindent
The next corollary follows from Theorem \ref{prelprop} resp. Proposition \ref{prelprop1} and the monotonicity property, Theorem \ref{basic} (ii). 
\vskip 2mm
\noindent
\begin{cor}\label{skleiner1}
Let $0<s \leq 1$  and let $f \in  {\mathit C}_{s}(\R^n)$.  Then there are absolute  constants $c$ and $C$ such that the following hold.
\par
\noindent
(i)  Let $\lam \in [0, \frac{1}{2}]$. Then
\begin{eqnarray*}
&& c \frac{ \left(s \, n_s\right)^{\lam(n_s+1)}}{(n+1)^{\lam(n+2)}} \max \left\{\frac{ C^{n \lam}}{(n+1)^{5/6-\lam} \, L_{f}^{2n\lam}}, \, \, 
\frac{1}{(n+1)^{(n+1)(1-2 \lam)}}  \right\} \,     \frac{IS^{(s)}_\lam(g^{(s)}_e)}{\left(\int_{\mathbb{R}^n} g^{(s)}_e dx\right)^{1-2\lam}} \\
&&\leq 
\frac{IS_\lam^{(s)} (f)}{\left(\int_{\mathbb{R}^n} f \right)^{1-2\lam}}
\leq \frac{IS_\lam(g^{(s)}_e)}{\left(\int_{\mathbb{R}^n} g^{(s)}_e dx\right)^{1-2\lam}}.
\end{eqnarray*}
\vskip 2mm
\noindent
(ii) Let $\lambda \in [\frac{1}{2},1]$. Then
\begin{eqnarray*}
&&c  \frac{ \left(s \, n_s\right)^{\lam(n_s+1)}}{(n+1)^{\lam(n+2)}} \max \left\{ \frac{C^{n (2 \lam-1)}}{(n+1)^{\lam-1/6} \,    L_{f^\circ_{(s)}}^{2n(1-\lam)}},  \, \,   (n+1)^{(n+1) (1 -2 \lam)}\right\} \frac{OS_\lambda^{(s)} (g^{(s)}_e)}{\left(\int_{\mathbb{R}^n}g^{(s)}_e dx\right)^{1-2\lambda}}\\
&& \leq \frac{OS_\lambda^{(s)}(f)}{\left( \int_{\mathbb{R}^n} f dx \right)^{1-2\lambda}}
\leq  \frac{OS_\lambda^{(s)} (g^{(s)}_e)}{\left(\int_{\mathbb{R}^n}g^{(s)}_e dx\right)^{1-2\lambda}}.
\end{eqnarray*}
\vskip 2mm
\noindent
(iii) Let $\lambda \in (- \infty,0]$. Then
\begin{eqnarray*}
\frac{os_\lam^{(s)} (g^{(s)}_e) }{\left(\int_{\mathbb{R}^n}  g^{(s)}_e\, dx \right) ^{1-2 \lam}}\leq \frac{os_\lambda^{(s)}(f)}{\left( \int_{\mathbb{R}^n} f dx \right)^{1-2\lambda}} \leq 
 \frac{c^\lam \,  \left(s \, n_s\right)^{\lam(n_s+1)}}{(n+1)^{(n+1) (3 \lam-1) + \lam}}   
  \frac{os_\lam^{(s)} (g^{(s)}_e) }{\left(\int_{\mathbb{R}^n}  g^{(s)}_e\, dx \right) ^{1-2 \lam}}.
\end{eqnarray*}
\vskip 2mm
\noindent
(iv) Let $\lambda \in [1, \infty)$. Then
\begin{eqnarray*}
\frac{is_\lam^{(s)} (g^{(s)}_e) }{\left(\int_{\mathbb{R}^n}  g^{(s)}_e\, dx \right) ^{1-2 \lam}}\leq \frac{is_\lambda^{(s)}(f)}{\left( \int_{\mathbb{R}^n} f dx \right)^{1-2\lambda}} \leq
 c^\lam \frac{ \left(s \, n_s\right)^{\lam(n_s+1)}}{(n+1)^{n(1-\lam) +1}}   
\frac{is_\lam^{(s)} (g^{(s)}_e) }{\left(\int_{\mathbb{R}^n}  g^{(s)}_e\, dx \right) ^{1-2 \lam}}.
\end{eqnarray*}
\vskip 2mm
\noindent
Statements about equality cases and even functions are as in Theorem \ref{prelprop} and Proposition \ref{prelprop1}.
\end{cor}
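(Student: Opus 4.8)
The plan is to deduce everything from the integer case by combining the inclusion \eqref{contain} of the classes ${\mathit C}_s(\R^n)$ with the monotonicity Theorem~\ref{basic}(ii). Fix $0<s\le 1$ and $f\in{\mathit C}_s(\R^n)$; if $\tfrac1s\in\mathbb N$ there is nothing to do, so assume $s<1$ and put $m=\lceil 1/s\rceil$, $\tilde s=1/m$. Then $\tilde s\le s$, $\tfrac1{\tilde s}=m\in\mathbb N$, and by \eqref{contain} $f\in{\mathit C}_{\tilde s}(\R^n)$, with the same $\int_{\R^n}f$ and the same isotropic constant $L_f$ (see (\ref{Lphi})). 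Two elementary comparisons do all the arithmetic: $1\le s/\tilde s=sm<1+s\le 2$, and $n_s\le n_{\tilde s}=n+m<n_s+1$. The ``cheap'' halves of (i)--(iv) --- the upper bounds in (i),(ii) and the lower bounds in (iii),(iv) --- are exactly the isoperimetric inequalities of Theorem~\ref{basic}(iii), valid for every $s>0$; so only the lower bounds in (i),(ii) and the upper bounds in (iii),(iv) need an argument.

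For those, apply Theorem~\ref{prelprop}(i)--(ii), resp.\ Proposition~\ref{prelprop1}(i)--(ii), to $f$ with the admissible parameter $\tilde s$. Since $\tfrac1{\tilde s}\in\mathbb N$, Theorem~\ref{ranges} also guarantees the bound is attained inside ${\mathit C}_{\tilde s}^+(\R^n)$, and the L\"owner--ellipsoid/thin--shell construction of \cite{GiladiHuangSchuettWerner} underlying Theorem~\ref{prelprop} delivers the (near-)optimal competitor in the concrete form $c\,g^{(\tilde s)}_{e,\rho}\circ T$ for a linear $T$ (a translation being absorbed by the centering hypothesis). Now transfer from $\tilde s$ to $s$: because $1\le s/\tilde s<2$, the $s$-th power of $g^{(\tilde s)}_{e,\rho}\circ T$ is a power $\le 1$ of a concave quadratic and has everywhere nondegenerate Hessian, so $g^{(\tilde s)}_{e,\rho}\circ T\in{\mathit C}_s^+(\R^n)$ as well. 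Hence it is a legitimate competitor for $IS^{(s)}_\lam$ (resp.\ the other three quantities), and one is left to compare $as^{(s)}_\lam$ and $as^{(\tilde s)}_\lam$ on such a ball. Using the linear/scalar homogeneity of Proposition~\ref{Ungleichung1}(i), the explicit values in Lemma~\ref{lemma-phie}, and Theorem~\ref{basic}(ii) to compare $as^{(s)}_\lam(g^{(\tilde s)}_e)$ with $\int_{\R^n}g^{(\tilde s)}_e$, this reduces to bookkeeping the ratio
$$
\frac{as^{(s)}_\lam\!\big(g^{(\tilde s)}_{e,\rho}\circ T\big)}{as^{(\tilde s)}_\lam\!\big(g^{(\tilde s)}_{e,\rho}\circ T\big)}\;=\;\frac{as^{(s)}_\lam\!\big(g^{(\tilde s)}_{e}\big)}{\int_{\R^n}g^{(\tilde s)}_e},
$$
together with rewriting $\int_{\R^n}g^{(\tilde s)}_e$, $\int_{\R^n}g^{(s)}_e$ and $n_{\tilde s}=n+m$ in terms of the $s$-data via Lemma~\ref{lemma-phie}(i) and Stirling's formula. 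Collecting the resulting powers of $s,m,n,n_s$ produces precisely the prefactor $\tfrac{(s\,n_s)^{\lam(n_s+1)}}{(n+1)^{\lam(n+2)}}$ of (i) (and its analogues in (ii)--(iv)), turns the $n_{\tilde s}$'s of Theorem~\ref{prelprop} into $(n+1)$'s and $C^{n_{\tilde s}\lam}$ into $C^{n\lam}$, and leaves only bounded-base exponential factors $O(1)^n$ absorbed into the absolute constants $c,C$. The equality statements and the ``even $f$'' refinement carry over verbatim from Theorem~\ref{prelprop}/Proposition~\ref{prelprop1}, since every step preserves evenness and is an identity when $f=g^{(s)}_e$ or $\lam\in\{0,\tfrac12\}$.

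The only real obstacle is this transfer. Theorem~\ref{prelprop} is tied to $\tfrac1s\in\mathbb N$ (its proof passes through the associated body $K_s(f)$), and since the functionals $as^{(s)}_\lam$ and $as^{(\tilde s)}_\lam$ genuinely differ --- as do the classes ${\mathit C}_s^+$ and ${\mathit C}_{\tilde s}^+$ --- one cannot simply invoke Theorem~\ref{basic}(ii); the compatibility of the $\tilde s$-ball with both parameters, the identity for the ratio above, and especially the careful accounting of the several $\Gamma$-function factors (those controlling $\int_{\R^n}g^{(\tilde s)}_e/\int_{\R^n}g^{(s)}_e$ and $as^{(s)}_\lam(g^{(\tilde s)}_e)/\int_{\R^n}g^{(\tilde s)}_e$) are what make the somewhat baroque prefactors come out exactly as stated rather than weaker. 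Everything else is formal.
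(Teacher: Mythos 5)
Your strategy runs in the opposite direction from the paper's, and the transfer step at its heart does not work. The paper's proof is short and goes \emph{up} to $s=1$: since $s\le 1$, Theorem \ref{basic}(ii) gives $IS^{(s)}_\lam(f)\ge IS^{(1)}_\lam(f)$ (monotonicity used in the favorable direction), Theorem \ref{prelprop} is then applied with parameter $1$, so all dimensional factors come out in $n+1=n_1$, and finally the reference ball is changed from $g^{(1)}_e$ to $g^{(s)}_e$ via $\int_{\R^n} g^{(1)}_e\,dx\big/\int_{\R^n} g^{(s)}_e\,dx = c\,(s\,n_s)^{(n_s+1)/2}/(n+1)^{(n+2)/2}$, which is exactly where the prefactor $(s\,n_s)^{\lam(n_s+1)}/(n+1)^{\lam(n+2)}$ and the $(n+1)$-based maxima come from. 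You instead go \emph{down} to $\tilde s=1/\lceil 1/s\rceil\le s$, where, as you yourself note, Theorem \ref{basic}(ii) only gives $IS^{(\tilde s)}_\lam(f)\ge IS^{(s)}_\lam(f)$, i.e.\ the unhelpful inequality, so your whole argument hinges on transporting the near-extremal competitor of the $\tilde s$-problem into the $s$-problem.

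That transfer has concrete gaps. First, the competitor built in the proof of Theorem \ref{prelprop}(i) for the $L_f$-term is \emph{not} of the form $c\,g^{(\tilde s)}_{e,\rho}\circ T$: it is a mollification of the function $h$ with $K_{\tilde s}(h)=K_{\tilde s}(f)\cap R\,B_2^{n_{\tilde s}}$, i.e.\ $f$ truncated by a Euclidean ball, and only the spherical portion $R\,O$ of its boundary enters the estimate; hence your reduction to the ratio $as^{(s)}_\lam\big(g^{(\tilde s)}_e\big)/\int g^{(\tilde s)}_e$ does not apply to that term (only the L\"owner-ellipsoid bound, used for the second term of the max and for (iii),(iv), has an ellipsoidal competitor). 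Second, even for that ellipsoidal competitor you never establish the needed lower bound on $as^{(s)}_\lam$ of a fixed function: Theorem \ref{basic}(ii) concerns the extremal quantities $IS,OS,is,os$, and the monotonicity it encodes again points the wrong way ($\tilde s\le s$ makes the $\tilde s$-quantity the larger one), so knowing $as^{(\tilde s)}_\lam(h)$ is large says nothing about $as^{(s)}_\lam(h)$ without an explicit computation of $as^{(s)}_\lam\big(g^{(\tilde s)}_{e,\rho}\big)$, which you do not carry out. Third, the bookkeeping claim is false as stated: Theorem \ref{prelprop} at parameter $\tilde s$ produces factors in $n_{\tilde s}=n+\lceil 1/s\rceil$ (e.g.\ $n_{\tilde s}^{-n_{\tilde s}(1-2\lam)}$, $n_{\tilde s}^{5/6-\lam}$, $C^{n_{\tilde s}\lam}$), and for small $s$ these differ from the $(n+1)$-based factors in the corollary by amounts super-exponential in $1/s$; such discrepancies are not $O(1)^n$, cannot be absorbed into absolute constants, and are not cancelled by the ratio $\int g^{(\tilde s)}_e/\int g^{(s)}_e$, which is only of polynomial size because $n_s\le n_{\tilde s}<n_s+1$. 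So even if the transfer were repaired, your route would yield a strictly weaker inequality than the stated one, not ``precisely the prefactor of (i)''.
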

\vskip 3mm
\begin{proof}
By the monotonicity property Theorem \ref{basic} (ii), Theorem \ref{prelprop}, Lemmas \ref{ge-prop} and 
\ref{lemma-phie}, 
\begin{eqnarray*}
&&\frac{IS_\lambda^{(s)}(f)}{\left( \int_{\mathbb{R}^n} f dx \right)^{1-2\lambda}} \geq \frac{IS_\lambda^{(1)}(f)}{\left( \int_{\mathbb{R}^n} f dx \right)^{1-2\lambda}}
\geq  \\
&& \max \left\{ \frac{c^{(n+1)\lam}}{(n+1)^{5/6 -\lam} \,    L_{f}^{2n \lam}},  \, \, \frac{1}{  (n+1)^{(n+1) (1 -2 \lam)}}\right\} \frac{IS_\lambda^{(s)} (g_e^{(1)})}{\left(\int_{\mathbb{R}^n}g_e^{(1)} dx\right)^{1-2\lambda}} =\\
&&  \max \left\{ \frac{c^{(n+1) \lam}}{(n+1)^{5/6-\lam} \,    L_{f}^{2n \lam}},  \, \,  \frac{1}{ (n+1)^{(n+1) (1 -2 \lam)}}\right\} \frac{IS_\lambda^{(s)} (g_e^{(s)})}{\left(\int_{\mathbb{R}^n}g_e^{(s)} dx\right)^{1-2\lambda}}  \left( \frac{\int_{\mathbb{R}^n}g_e^{(1)} dx}{\int_{\mathbb{R}^n}g_e^{(s)} dx}\right)^{2 \lam}.
\end{eqnarray*}
Now we observe that with an absolute constant $c$, 
$$
\frac{\int_{\mathbb{R}^n}g_e^{(1)} dx}{\int_{\mathbb{R}^n}g_e^{(s)} dx} = c \frac{ \left(s \, n_s\right)^{\frac{1}{2}(n_s+1)}}{(n+1)^{\frac{1}{2}(n+2)}}.
$$
This finishes the proof of (i). The other estimates are done in the same way.
\end{proof}
\vskip 3mm
\noindent
{\bf Remark.}
\par
\noindent
As an immediate consequence of the previous corollary, we get estimates for  log-concave functions   $f=e^{-\psi}$ where  
$\psi \colon \R^n \to \R \cup\{+\infty\}$ is a convex function. Log-convave functions correspond  to the case $s=0$.  However, these estimates are far from optimal.
The estimates follow immediately from Corollary \ref{skleiner1}, letting $s \to 0$,  and observing that 
 $\lim_{s \to 0} \left(s \, n_s\right)^{\lam(n_s+1)} = c^n$, for some constant $c>0$. We only list $IS_\lam^{(0)}$. The others are accordingly.
\par
\noindent
Let  $f \in  {\mathit C}_{0}(\R^n)$.  Then there are absolute  constants $c$ and $C$ such that 
for $\lam \in [0, \frac{1}{2}]$, 
\begin{eqnarray*}
&& \frac{ c^{n \lam }}{(n+1)^{\lam(n+2)}} \max \left\{\frac{ C^{n \lam}}{(n+1)^{5/6-\lam} \, L_{f}^{2n\lam}}, \, \, 
\frac{1}{(n+1)^{(n+1)(1-2 \lam)}}  \right\} \,     \frac{IS^{(0)}_\lam(g_e)}{\left(\int_{\mathbb{R}^n} g^{(s)}_e dx\right)^{1-2\lam}} \\
&&\leq 
\frac{IS_\lam^{(0)} (f)}{\left(\int_{\mathbb{R}^n} f \right)^{1-2\lam}}
\leq \frac{IS_\lam^{(0)}(g_e)}{\left(\int_{\mathbb{R}^n} g^{(s)}_e dx\right)^{1-2\lam}}, 
\end{eqnarray*}
where $g^{(0)}_{e}(x) = e^{-\frac{|x|^2}{2}}$.
Statements about equality cases and even functions are as in Theorem \ref{prelprop} and Proposition \ref{prelprop1}.
\vskip 4mm
\noindent
Now we state the general cases.  We already have estimates in the general case  for $s \leq 1$ from Corollary \ref{skleiner1}. We  give  estimates when $s \geq 1$.
\vskip 2mm
\noindent
\subsubsection{The case $ s \geq 1$.}
\vskip 2mm
\noindent
\begin{theo} \label{estimate} Let $f \in  {\mathit C}_{s}(\R^n)$. 
\par
\noindent
(i) Let  $\lam \in [ \frac{1}{2},1]$. 
\par
\noindent
Then we have for all $s \in \mathbb{R}$, $s>0$:   \hskip 6mm
$
\frac{OS_\lambda^{(s)} (f)}{\left(\int_{\mathbb{R}^n}f dx\right)^{1-2\lambda}} \leq \frac{OS_\lambda^{(s)} (g^{(s)}_e)}{\left(\int_{\mathbb{R}^n}g^{(s)}_e dx\right)^{1-2\lambda}}.
$
\par
\noindent
Equality holds on the right hand side if $\lam=\frac{1}{2}$, and if  $f=g^{(s)}_e$.
\par
\noindent
There is an absolute constant $c>0$ such  that for all $s \geq 1$,
$$
 \frac{OS_\lam^{(s)} (f)}{\left(\int_{\mathbb{R}^n}f \right)^{1-2\lam}}  \geq \frac{ c^{n (2\lam-1)}}{(n+1)^{(3n/2+1)(2\lam-1)}}
 \frac{OS_\lam^{(s)}(g^{(s)}_e)}{\left(\int_{\mathbb{R}^n} g^{(s)}_e dx\right)^{1-2\lam}}
$$
\vskip 2mm
\noindent
(ii) Let  $\lam \in [0, \frac{1}{2}]$.
\par
\noindent
Then we have for all $s \in \mathbb{R}$, $s>0$: \hskip 6mm
$ \frac{IS_\lam^{(s)} (f)}{\left(\int_{\mathbb{R}^n}f \right)^{1-2\lam}}  \leq \frac{IS_\lam^{(s)}(g^{(s)}_e)}{\left(\int_{\mathbb{R}^n} g^{(s)}_e dx\right)^{1-2\lam}}.$
\par
\noindent
Equality holds  if $\lam=0$, $\lam=\frac{1}{2}$, and if  $f=g^{(s)}_e$.
\par
\noindent
There is an absolute constant $c>0$ such  that for all $s \geq 1$,
$$
 \frac{IS_\lam^{(s)} (f)}{\left(\int_{\mathbb{R}^n}f \right)^{1-2\lam}}  \geq \frac{ c^{n(2\lam-1)} }{(n+1)^{(3n/2+1)(1-2\lam)}}
 \frac{IS_\lam^{(s)}(g^{(s)}_e)}{\left(\int_{\mathbb{R}^n} g^{(s)}_e dx\right)^{1-2\lam}}
$$
\par
\noindent
If $f$ is even, $(n+1)^{(3n/2+1)(1-2\lam)}$ can be replaced in (i) and (ii) by $(n+1)^{\frac{(3n/2+1)}{2}(1-2\lam)}$.

\end{theo}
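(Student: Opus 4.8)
The plan is to obtain the upper (isoperimetric) bounds for free from what is already proved and to obtain the lower bounds for $s\ge 1$ by comparing $f$ with a single, explicitly computable ellipsoidal $s$-concave function coming from the functional L\"owner--John ellipsoid. For the upper bounds: for every $s>0$ the stated inequality $\frac{IS_\lambda^{(s)}(f)}{(\int f)^{1-2\lambda}}\le\frac{IS_\lambda^{(s)}(g_e^{(s)})}{(\int g_e^{(s)})^{1-2\lambda}}$ on $\lambda\in[0,\tfrac12]$, and its analogue for $OS_\lambda^{(s)}$ on $\lambda\in[\tfrac12,1]$, are precisely the isoperimetric inequalities of Theorem~\ref{basic}(iii); since Lemma~\ref{ge-prop} gives $IS_\lambda^{(s)}(g_e^{(s)})=OS_\lambda^{(s)}(g_e^{(s)})=\int_{\R^n}g_e^{(s)}$, the right-hand sides are $\big(\int_{\R^n}g_e^{(s)}\big)^{2\lambda}$, and the equality cases $\lambda=0,\tfrac12$ and $f=g_e^{(s)}$ are read off from Theorem~\ref{ranges} (which gives $IS_0^{(s)}(f)=\int f$ and $IS_{1/2}^{(s)}(f)=OS_{1/2}^{(s)}(f)=\int g_e^{(s)}$) and from Lemma~\ref{ge-prop}. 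No assumption on $s$ enters here.

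For the lower bound on $IS_\lambda^{(s)}(f)$, $\lambda\in[0,\tfrac12]$, $s\ge1$: using the linear invariance of $IS_\lambda^{(s)}(\cdot)/(\int\cdot)^{1-2\lambda}$ (Theorem~\ref{basic}(i)) I would first normalize $f$ so that its functional L\"owner ellipsoid (in the sense of \cite{LiSchuettWerner2019,Alonso-Gutiérrez2017,ivanov2020functional}) is $g_{e,R}^{(s)}$ for a suitable $R>0$; then $\int_{\R^n}f\le\int_{\R^n}g_{e,R}^{(s)}=R^{n_s}\int_{\R^n}g_e^{(s)}$. From the John-type decomposition at the contact points of this position I would extract an ellipsoidal function $h=\alpha\,g_{e,r}^{(s)}\circ T\in{\mathit C}_s^+(\R^n)$ with $h\le f$ and $\int_{\R^n}h\ge \big(c^{\,n}/(n+1)^{3n/2+1}\big)\int_{\R^n}f$ --- with the exponent $3n/2+1$ halved, and $h$ additionally centered at $0$, when $f$ is even. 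For every such ellipsoidal $h$, Proposition~\ref{Ungleichung1}(i) together with Lemma~\ref{lemma-phie} give
\[
as_\lambda^{(s)}(h)=\alpha^{1-2\lambda}\,|\det T|^{2\lambda-1}\,r^{\,n_s(1-2\lambda)}\int_{\R^n}g_e^{(s)}=\Big(\int_{\R^n}h\Big)^{1-2\lambda}\Big(\int_{\R^n}g_e^{(s)}\Big)^{2\lambda},
\]
i.e.\ ellipsoidal functions are the equality case of the isoperimetric inequality, so that
\[
IS_\lambda^{(s)}(f)\ \ge\ as_\lambda^{(s)}(h)\ \ge\ \frac{c^{\,n(2\lambda-1)}}{(n+1)^{(3n/2+1)(1-2\lambda)}}\Big(\int_{\R^n}f\Big)^{1-2\lambda}\Big(\int_{\R^n}g_e^{(s)}\Big)^{2\lambda};
\]
recalling $\frac{IS_\lambda^{(s)}(g_e^{(s)})}{(\int g_e^{(s)})^{1-2\lambda}}=\big(\int_{\R^n}g_e^{(s)}\big)^{2\lambda}$ this is the assertion, absolute constants changing from line to line, and with $3n/2+1$ halved when $f$ is even.

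For $OS_\lambda^{(s)}(f)$, $\lambda\in[\tfrac12,1]$, $s\ge1$, the argument is symmetric: I would take the circumscribed functional L\"owner ellipsoid $\mathcal E_L=\alpha\,g_{e,R}^{(s)}\circ T\in{\mathit C}_s^+(\R^n)$ with $\mathcal E_L\ge f$, so that $OS_\lambda^{(s)}(f)\ge as_\lambda^{(s)}(\mathcal E_L)=\big(\int_{\R^n}\mathcal E_L\big)^{1-2\lambda}\big(\int_{\R^n}g_e^{(s)}\big)^{2\lambda}$, and use the volume-ratio bound $\int_{\R^n}f\le\int_{\R^n}\mathcal E_L\le \big((n+1)^{3n/2+1}/c^{\,n}\big)\int_{\R^n}f$; since $1-2\lambda\le0$ this gives $\frac{OS_\lambda^{(s)}(f)}{(\int f)^{1-2\lambda}}\ge \frac{c^{\,n(2\lambda-1)}}{(n+1)^{(3n/2+1)(2\lambda-1)}}\cdot\frac{OS_\lambda^{(s)}(g_e^{(s)})}{(\int g_e^{(s)})^{1-2\lambda}}$, again with the even improvement. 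Alternatively one can deduce the $OS$ estimate from the $IS$ one via the duality $OS_\lambda^{(s)}(f)=IS_{1-\lambda}^{(s)}(f_{(s)}^\circ)$ of Theorem~\ref{basic}(iv), combined with the functional Blaschke--Santal\'o inequality and its reverse.

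The hard part will be two-fold. First, $as_\lambda^{(s)}$ is only \emph{linearly}, not affinely, invariant, so when the inscribed (resp.\ circumscribed) L\"owner--John ellipsoidal function is not centered at the origin one cannot simply invoke Proposition~\ref{Ungleichung1}(i); one must either recenter $f$, controlling the resulting change of $\int_{\R^n}f$ and of the center-of-gravity normalization, or estimate directly the effect of a translation $a$ --- harmless because $0$ stays in the support and such a translation only decreases the denominator $f^s-\langle x,\nabla(f^s)\rangle$ appearing in (\ref{def-1}), hence cannot hurt the lower bound. Second, producing \emph{exactly} the exponent $3n/2+1$ (and its halving for even $f$) requires carefully tracking the volume ratio between the functional L\"owner ellipsoid of an $s$-concave function and its inscribed John ellipsoid, together with the cost of the center-of-gravity normalization and of working inside the family $\{\alpha\,g_{e,r}^{(s)}\circ T\}$. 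Everything else is bookkeeping with Proposition~\ref{Ungleichung1}(i), Lemma~\ref{lemma-phie} and Lemma~\ref{ge-prop}.
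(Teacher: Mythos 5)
Your upper bounds and equality cases are exactly the paper's argument (Theorem \ref{basic}(iii), Theorem \ref{ranges}, Lemma \ref{ge-prop}), and they are fine. The lower bounds, however, have a genuine gap: the entire quantitative content is the claim that one can inscribe (resp.\ circumscribe) an ellipsoidal function $\alpha\, g^{(s)}_{e,r}\circ T$ whose integral is within a factor $c^{\,n}(n+1)^{-(3n/2+1)}$ of $\int_{\R^n} f$, with the exponent halved for even $f$. You do not prove this; you defer it to ``the John-type decomposition at the contact points'' and to the cited functional John/L\"owner ellipsoid papers, which concern log-concave functions and do not supply this volume-ratio estimate for the family $\{\alpha\, g^{(s)}_{e,r}\circ T\}$ in ${\mathit C}_s(\R^n)$ with the stated exponent. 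But this estimate is precisely where the work lies, and the paper obtains it by a different device that your proposal does not contain: it forms the symmetric lift $G(f)=\{(x,y): x\in\overline{S_f},\ |y|\le f^s(x)\}\subset\R^{n+1}$ with $|G(f)|=2\int f^s$, puts $G(f)$ in L\"owner position so that $L(G(f))=R\,B_2^{n+1}$, notes that the circumscribed function is then exactly $h_R=(R^2-|x|^2)^{1/(2s)}=s^{-1/(2s)}g^{(s)}_{e,\sqrt{s}R}$ (so no general functional John theorem is needed), and uses only the classical inclusion $G(f)\subset L(G(f))\subset (n+1)\,G(f)$ (with $\sqrt{n+1}$ in the symmetric, i.e.\ even, case), together with the normalization $\|f\|_\infty=1$ and $s\ge1$ to compare $\int f$ with $\int f^s=|G(f)|/2$; a Gamma-function ratio between $|B_2^{n+1}|$ and $\int g_e^{(s)}$ then produces the exponent $3n/2+1$. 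The $IS$ bound is deduced from the $OS$ bound by the duality of Theorem \ref{basic}(iv), as you also suggest.

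A second, smaller defect: your fix for the lack of affine invariance is not correct as stated. If the extremal ellipsoidal function is centered at $a\neq 0$, translating changes the integrand of (\ref{def-1}) through the term $f^s(x)-\langle x,\nabla f^s(x)\rangle$, whose change $\langle a,\nabla f^s(x)\rangle$ has no fixed sign on $S_f$, and even a pointwise decrease of this quantity does not uniformly help, since the exponent $\lambda\bigl(n+\tfrac1s+1\bigr)-1$ can be of either sign in the relevant $\lambda$-range. So the assertion that recentering is ``harmless because the translation only decreases the denominator'' does not stand; in the paper this issue never arises because the lift $G(f)$ is automatically symmetric in the last coordinate and only linear maps and the scaling $f\mapsto \alpha f$ (covered by Theorem \ref{basic}(i)) are used.
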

\vskip 4mm
\noindent
\begin{prop}\label{estimate-prop}
Let $s \geq 1$.  Let $f \in  {\mathit C}_{s}(\R^n)$. 
 \par
 \noindent
(i) Let $\lambda \in (-\infty,0]$. Then there is  an absolute constant $c>0$ such that 
$$\frac{os_\lambda^{(s)} (g^{(s)}_e)}{\left(\int_{\mathbb{R}^n}g^{(s)}_e dx\right)^{1-2\lambda}} \leq  \frac{os_\lambda^{(s)} (f)}{\left(\int_{\mathbb{R}^n}f dx\right)^{1-2\lambda}} 
\leq c^{n(1- 2 \lam)} (n+1)^{(3n/2+1) (1-2\lambda)} \frac{os_\lambda^{(s)} (g^{(s)}_e)}{\left(\int_{\mathbb{R}^n}g^{(s)}_e dx\right)^{1-2\lambda}}.$$
Equality holds on the left hand side if $\lam=0$ or  if  $f=g^{(s)}_e$.
\vskip 2mm
\noindent
(ii) Let $\lambda \in [1, \infty)$. Then there is  an absolute constant $c>0$ such that
$$\frac{is_\lambda^{(s)} (g^{(s)}_e)}{\left(\int_{\mathbb{R}^n}g^{(s)}_e dx\right)^{1-2\lambda}} \leq  \frac{is_\lambda^{(s)} (f)}{\left(\int_{\mathbb{R}^n}f dx\right)^{1-2\lambda}} 
\leq c^{n(2 \lam-1)} (n+1)^{(3n/2+1) (2\lambda-1)}  \frac{is_\lambda^{(s)} (g^{(s)}_e)}{\left(\int_{\mathbb{R}^n}g^{(s)}_e dx\right)^{1-2\lambda}}.$$
Equality holds on the left hand side  if  $f=g^{(s)}_e$.
\par
\noindent
If $f$ is even, $(n+1)^{(3n/2+1)(1-2\lam)}$ can be replaced  in (i) and (ii) by $(n+1)^{\frac{(3n/2+1)}{2}(1-2\lam)}$.
\end{prop}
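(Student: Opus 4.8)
The plan is to get the two \emph{lower} inequalities for free from the isoperimetric inequalities already established, to obtain the \emph{upper} inequality in (i) by feeding one well-chosen ellipsoidal competitor into the infimum defining $os^{(s)}_\lambda(f)$, and to deduce (ii) from (i) by the $is\leftrightarrow os$ duality of Theorem \ref{basic}(iv) together with the Blaschke--Santal\'o inequality \eqref{Bl-Sa}. For the lower bounds there is nothing to do: for $\lambda\in(-\infty,0]$ the inequality $os^{(s)}_\lambda(f)/(\int_{\mathbb{R}^n}f)^{1-2\lambda}\ge os^{(s)}_\lambda(g^{(s)}_e)/(\int_{\mathbb{R}^n}g^{(s)}_e)^{1-2\lambda}$ is exactly the third isoperimetric inequality of Theorem \ref{basic}(iii), and for $\lambda\in[1,\infty)$ the one for $is^{(s)}_\lambda$ is the fourth; the asserted equality cases (equality for $f=g^{(s)}_e$, and additionally for $\lambda=0$ in the $os$--case, where $os^{(s)}_0(f)=\int_{\mathbb{R}^n}f\,dx$ by Theorem \ref{ranges}) are precisely those recorded there.

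For the upper bound in (i), fix $\lambda\le0$. Since $os^{(s)}_\lambda(f)=\inf_{h\ge f,\,h\in C^+_s(\mathbb{R}^n)}as^{(s)}_\lambda(h)$, it suffices to produce one admissible $h$. I would bring $f$ to a suitable ($L_f$-free) position by a volume-preserving linear map and then, using the functional L\"owner ellipsoid together with the thin-shell / volume-concentration estimate of \cite{LeeVempala17,GuedonMilman,Paouris} and the centroid normalisation $\int_{S_f}x\,f\,dx=0$ (which keeps $f$ ``round'' and keeps it comparable to $\|f\|_\infty$ near the origin), dominate $f$ by a \emph{centered} affine image $h=g^{(s)}_e\circ A^{-1}$ of $g^{(s)}_e$ (equivalently a dilated $g^{(s)}_{e,r}\circ A^{-1}$) with the volume estimate $\int_{\mathbb{R}^n}h\,dx\le c^{\,n}(n+1)^{3n/2+1}\int_{\mathbb{R}^n}f\,dx$, the factor $(n+1)^{3n/2+1}$ improving to $(n+1)^{(3n/2+1)/2}$ in the even case (symmetric concentration). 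Because $h$ is a centered linear image of $g^{(s)}_e$, with $|\det A|=\int_{\mathbb{R}^n}h\,dx\big/\int_{\mathbb{R}^n}g^{(s)}_e\,dx$, Proposition \ref{properties}(i), Lemma \ref{lemma-phie}(i) and Lemma \ref{ge-prop} give
\[
as^{(s)}_\lambda(h)=|\det A|^{2\lambda-1}\!\int_{\mathbb{R}^n}\!g^{(s)}_e\,dx=\Big(\textstyle\int_{\mathbb{R}^n}h\,dx\Big)^{1-2\lambda}\frac{os^{(s)}_\lambda(g^{(s)}_e)}{\big(\int_{\mathbb{R}^n}g^{(s)}_e\,dx\big)^{1-2\lambda}} .
\]
Since $1-2\lambda\ge1>0$, inserting the volume estimate and dividing by $(\int_{\mathbb{R}^n}f\,dx)^{1-2\lambda}$ yields exactly the right-hand side of (i), with equality at $f=g^{(s)}_e$.

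For (ii), let $\lambda\ge1$. The duality in Theorem \ref{basic}(iv), in the form $is^{(s)}_\lambda(f)=os^{(s)}_{1-\lambda}(f^\circ_{(s)})$ (which follows from $h\le f\iff h^\circ_{(s)}\ge f^\circ_{(s)}$ and Proposition \ref{properties}(iii)), reduces matters to part (i) applied to $f^\circ_{(s)}$ with parameter $1-\lambda\le0$; using $1-2(1-\lambda)=2\lambda-1>0$ and $os^{(s)}_{1-\lambda}(g^{(s)}_e)=\int_{\mathbb{R}^n}g^{(s)}_e\,dx$ this gives
\[
is^{(s)}_\lambda(f)\le c^{\,n(2\lambda-1)}(n+1)^{(3n/2+1)(2\lambda-1)}\Big(\textstyle\int_{\mathbb{R}^n}f^\circ_{(s)}\,dx\Big)^{2\lambda-1}\Big(\textstyle\int_{\mathbb{R}^n}g^{(s)}_e\,dx\Big)^{2-2\lambda}.
\]
The Blaschke--Santal\'o inequality \eqref{Bl-Sa} gives $\int_{\mathbb{R}^n}f^\circ_{(s)}\,dx\le(\int_{\mathbb{R}^n}g^{(s)}_e\,dx)^2\big/\int_{\mathbb{R}^n}f\,dx$; raising this to the positive power $2\lambda-1$ and substituting turns the last display into precisely the right-hand side of (ii), with the equality case $f=g^{(s)}_e$ preserved.

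The hard part is the upper bound in (i). Because $as^{(s)}_\lambda$ is \emph{not} translation invariant (formula \eqref{def-1} contains $\langle x,\nabla f^s(x)\rangle$), the dominating competitor must be an affine image of $g^{(s)}_e$ that is genuinely \emph{centered at the origin}, whereas the minimal-integral ellipsoidal function above a centered $f$ need not be centered; simultaneously controlling the support and the height of a centered dominating function with integral loss only of order $c^n(n+1)^{3n/2+1}$ (rather than the naive $n_s^{n_s}$) is exactly where the centroid assumption and the thin-shell estimate are used, and is the technical heart of the proof. A minor further point in Step 3 is that $f^\circ_{(s)}$ need not have centroid at the origin; this causes no trouble, since the centroid condition enters Step 2 only through the lower control of the function near its chosen center, which can be arranged after a translation that does not affect the scale-invariant conclusion (or one simply repeats the Step 2 construction directly for the Legendre dual).
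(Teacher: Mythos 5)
Your lower bounds (Theorem \ref{basic}(iii)) and your reduction of (ii) to (i) via the duality $is_\lambda^{(s)}(f)=os_{1-\lambda}^{(s)}(f_{(s)}^\circ)$ followed by the Blaschke--Santal\'o estimate (\ref{Bl-Sa}) are correct and agree with what the paper does. The problem is the upper bound in (i), which you yourself identify as ``the technical heart'': at that point your argument is an assertion, not a proof. You claim that a centered affine image $h=g_e^{(s)}\circ A^{-1}$ dominating $f$ exists with $\int h\le c^n(n+1)^{3n/2+1}\int f$, citing the functional L\"owner ellipsoid, the thin-shell estimates and the centroid normalisation, but no construction or quantitative estimate is given, and the tools you invoke are not the ones that produce this bound: thin-shell/Gu\'edon--Milman concentration is used in the paper only for the \emph{inner} lower bound of $IS_\lambda^{(s)}$ in Theorem \ref{prelprop}, where one carves a spherical sector \emph{inside} an isotropic body; it gives no mechanism for dominating $f$ from above with controlled integral. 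Moreover your sketch never uses the hypothesis $s\ge 1$, which is in fact essential to the stated constant.

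The paper's actual argument is elementary and quite different. One forms the $(n+1)$-dimensional body $G(f)$ of (\ref{hyper}) (the subgraph of $f^s$ symmetrized in the last coordinate, so $|G(f)|=2\int f^s$), puts it in L\"owner position so that $L(G(f))=R\,B_2^{n+1}$, and takes the admissible competitor $h_R(x)=(R^2-|x|^2)^{1/(2s)}=s^{-1/(2s)}g^{(s)}_{e,\sqrt{s}R}(x)\ge f$, whose $\lambda$-affine surface area is computed exactly from Lemma \ref{lemma-phie} and the homogeneity in Proposition \ref{properties}(i). The factor $(n+1)^{(3n/2+1)(1-2\lambda)}c^{n(1-2\lambda)}$ then comes from the John/L\"owner containment (\ref{loewner3}), $L(G(f))\subset (n+1)G(f)$, combined with the normalisation $0\le f\le 1$ and $s\ge1$ (so that $\int f^s\le\int f$ — this is where $s\ge 1$ enters) and a Gamma-function comparison of $\int g_e^{(s)}$ with $|B_2^{n+1}|$, exactly as in the chain (\ref{OS-s>1}) with the inequalities reversed because $1-2\lambda\ge 1$; the even case uses (\ref{loewner4}) with $\sqrt{n+1}$ in place of $n+1$. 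This also disposes of your centering worry: $G(f)$ is symmetric under reflection in the last coordinate, so after the linear positioning the dominating function is automatically a centered multiple of a dilated $g_e^{(s)}$. Unless you supply a proof of your claimed domination with the stated integral loss, your argument for the right-hand inequality in (i) (and hence also in (ii), which you derive from it) is incomplete.
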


\vskip 3mm
\subsection{Results for convex bodies.}
As a corollary to the theorems of the previous subsection, we recover  results of \cite{GiladiHuangSchuettWerner} on estimates for the size of extremal affine surface areas for convex bodies.
We can assume without loss of generality (see also \cite{GiladiHuangSchuettWerner})  that  $0$ is the center of 
gravity of $K$. We recover and improve  the results of \cite{GiladiHuangSchuettWerner} for those convex bodies that
are symmetric about a hyperplane through $0$. This is  a large class, including all unconditional bodies and all bodies obtained form a general convex
body by one Steiner symmetrization.

\vskip 2mm
\begin{theo}\label{bodies}
Let $K$ be a convex body in $\mathbb{R}^{n+1}$ that is symmetric about a hyperplane through $0$. Let $p \in \mathbb{R}$, $p \neq -(n+1)$. Then 
\par
(i) For $p \in [0, n+1]$, 
\begin{eqnarray*} 
 &&\max \left\{\left(\frac{c}{L_K} \right)^\frac{2(n+1)p}{n+1+p} \frac{1}{ n^{5/6- p/(n+1+p)} }, (n+1) ^{-(n+1)\frac{n+1-p}{n+1+p}}\right\} \,  (n+1) \, |B^{n+1}_2|^ \frac{2p}{n+1+p}\\
&& \hskip 70mm \leq \frac{IS_p(K)}{|K|^\frac{n+1-p}{n+1+p} }  \leq  (n+1) \, |B^{n+1}_2|^ \frac{2p}{n+1+p}.
\end{eqnarray*}
\par
(ii) For $p \in [n+1, \infty]$, 
\begin{eqnarray*} 
 &&\max \left\{\left(\frac{c}{L_{K^\circ}} \right)^\frac{2(n+1)p}{n+1+p} \frac{1}{ n^{5/6- p/(n+1+p)} }, (n+1) ^{-(n+1)\frac{n+1-p}{n+1+p}}\right\} \,  (n+1) \, |B^{n+1}_2|^ \frac{2p}{n+1+p}\\
&& \hskip 70mm \leq \frac{OS_p(K)}{|K|^\frac{n+1-p}{n+1+p} }  \leq  (n+1) \, |B^{n+1}_2|^ \frac{2p}{n+1+p}.
\end{eqnarray*}
\par
(iii) For $p \in (- (n+1),  0]$, 
\begin{eqnarray*} 
   (n+1) \, |B^{n+1}_2|^ \frac{2p}{n+1+p}  \leq \frac{os_p(K)}{|K|^\frac{n+1-p}{n+1+p} }  \leq  (n+1) ^{(n+1)\frac{n+1-p}{n+1+p}} (n+1) \, |B^{n+1}_2|^ \frac{2p}{n+1+p}.
\end{eqnarray*}
\par
(iv) For $p \in [- \infty,  -(n+1))$, 
\begin{eqnarray*} 
   (n+1) \, |B^{n+1}_2|^ \frac{2p}{n+1+p}  \leq \frac{is_p(K)}{|K|^\frac{n+1-p}{n+1+p} }  \leq  (n+1) ^{(n+1)\frac{n+1-p}{n+1+p}} (n+1) \, |B^{n+1}_2|^ \frac{2p}{n+1+p}.
\end{eqnarray*}
\end{theo}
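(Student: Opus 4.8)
The plan is to deduce Theorem \ref{bodies} from the functional estimates of Theorem \ref{prelprop} and Proposition \ref{prelprop1} through the body $K_s(f)$ of \eqref{def.Ksf} in the case $s=1$. First I would normalise $K$: since $as_p$ -- and hence each of $IS_p,OS_p,os_p,is_p$ -- together with the volume are invariant under orthogonal maps, I may rotate $K$ so that its hyperplane of symmetry is $\R^n\times\{0\}\subset\R^{n+1}$. Symmetry about this hyperplane then forces $K=\{(x,t)\in\R^n\times\R:\ x\in L,\ |t|\le f(x)\}$ with $L\subset\R^n$ the convex projection of $K$ and $f$ concave on $L$; in other words $f\in{\mathit C}_1(\R^n)$, $S_f=L$, and $K=K_1(f)$. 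The $t$-coordinate of the centroid of $K$ vanishes by symmetry, so the hypothesis that $0$ is the centroid of $K$ is equivalent to $\int_{S_f}x\,f\,dx=0$, i.e.\ $f$ is admissible in Definition \ref{ex-asa}. Moreover polarity commutes with the reflection in $\R^n\times\{0\}$, so $K^\circ$ is again symmetric about this hyperplane and, by a direct check from \eqref{Leg}, $K^\circ=K_1(f^\circ_{(1)})$; this is what will yield the constant $L_{K^\circ}$ in part (ii).

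Next I would set up the dictionary for $s=1$, where $n_s=n+1$, $s^{n/2}=1$, $\mathrm{vol}_1(B_2^1)=2$. Given $p\neq-(n+1)$, put $\lambda=\frac{p}{n+1+p}$, so $p=(n+1)\frac{\lambda}{1-\lambda}$; then the four $p$-ranges $[0,n+1]$, $[n+1,\infty]$, $(-(n+1),0]$, $[-\infty,-(n+1))$ match the relevant $\lambda$-ranges $[0,\tfrac12]$, $[\tfrac12,1]$, $(-\infty,0]$, $[1,\infty)$ of $IS,OS,os,is$. By \eqref{vol-Ks} and \eqref{vol-Ks-euclid} one has $|K|=2\int f$ and $|B_2^{n+1}|=2\int g^{(1)}_e$; by \eqref{motiv1}, $as_p(K_1(h))=2(n+1)\,as^{(1)}_\lambda(h)$ for $h\in{\mathit C}^+_1(\R^n)$; by Lemma \ref{ge-prop} all four extremal quantities of $g^{(1)}_e$ equal $\int g^{(1)}_e$; and the isotropic constant is preserved, $L_{K_1(f)}=L_f$ and $L_{K^\circ}=L_{f^\circ_{(1)}}$. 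The ``trivial'' direction of the correspondence is then immediate: each $h\in{\mathit C}^+_1(\R^n)$ with $h\le f$ gives $K_1(h)\subseteq K$, hence $IS_p(K)\ge 2(n+1)IS^{(1)}_\lambda(f)$, and likewise $OS_p(K)\ge 2(n+1)OS^{(1)}_\lambda(f)$, $os_p(K)\le 2(n+1)os^{(1)}_\lambda(f)$, $is_p(K)\le 2(n+1)is^{(1)}_\lambda(f)$.

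To finish each of (i)--(iv) I would insert the corresponding lower bound (for $IS,OS$) or upper bound (for $os,is$) of Theorem \ref{prelprop}/Proposition \ref{prelprop1} into these inequalities and substitute the dictionary identities ($2\lambda=\frac{2p}{n+1+p}$, $1-2\lambda=\frac{n+1-p}{n+1+p}$, $L_f=L_K$, $L_{f^\circ_{(1)}}=L_{K^\circ}$); this produces the nontrivial side of each statement, including the improved polynomial factor. The remaining side is classical convex-body material: for (i) and (ii) the $L_p$-affine isoperimetric inequality $as_p(C)\le(n+1)|B_2^{n+1}|^{\frac{2p}{n+1+p}}|C|^{\frac{n+1-p}{n+1+p}}$ \cite{Lutwak96,WernerYe2008}, together with $|C|\le|K|$ for $C\subseteq K$ (resp.\ $|C|\ge|K|$ for $C\supseteq K$) and the sign of the exponent $\frac{n+1-p}{n+1+p}$ ($\ge0$ for $p\le n+1$, $\le0$ for $p\ge n+1$), so that the supremum over the admissible $C$ is exactly the stated bound; for (iii) and (iv) the reversed $L_p$-affine isoperimetric inequality valid for $p<0$ \cite{WernerYe2008} applied to $C\supseteq K$ (resp.\ $C\subseteq K$), giving the clean left-hand bounds, the right-hand bounds coming either from the correspondence and Proposition \ref{prelprop1} or from comparison of $C$ with the L\"owner, resp.\ John, ellipsoid of $K$. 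If one wants a fully self-contained deduction, a Steiner symmetrization in the direction of $\R^n\times\{0\}$ shows the extremal body may be taken symmetric about that hyperplane, upgrading the inequalities of the previous paragraph to equalities $IS_p(K)=2(n+1)IS^{(1)}_\lambda(f)$, etc., so that both sides of (i)--(iv) follow purely from the functional theorems; this variant requires monotonicity of $as_p$ under such a symmetrization in the relevant range of $p$.

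The whole conceptual content is carried by Theorem \ref{prelprop} and Proposition \ref{prelprop1}; what remains is bookkeeping, and that is where the difficulty lies. The points needing care are: the two identifications $L_{K_1(f)}=L_f$ and $K_1(f)^\circ=K_1(f^\circ_{(1)})$, with mild closure issues at $\partial S_f$; tracking the \emph{sign} of $\frac{n+1-p}{n+1+p}$ across the four regimes -- it flips exactly when $p$ crosses $-(n+1)$, which is precisely where the cases split, and it dictates the direction in which volume monotonicity and the (reversed) affine isoperimetric inequalities are applied; and matching the explicit constants, in particular the factors $2$ and $n+1$, the exponents on $|B_2^{n+1}|$, $L_K$ and $L_{K^\circ}$, the polynomial factor $n^{5/6-p/(n+1+p)}$ that is the claimed improvement over \cite{GiladiHuangSchuettWerner}, and the remark that the even-function refinement of Theorem \ref{prelprop}/Proposition \ref{prelprop1} applies exactly when $f$ -- equivalently $K$ -- is also centrally symmetric.
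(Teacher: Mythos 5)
Your proposal follows essentially the same route as the paper's own proof: rotate so that $K$ is symmetric about $\R^n\times\{0\}$, write $K=K_1(f)$ for a $1$-concave $f$ with centered barycenter, set $\lambda=\frac{p}{n+1+p}$, obtain the ``easy'' sides from the (reverse) $L_p$-affine isoperimetric inequality and the ``hard'' sides from Theorem \ref{prelprop} and Proposition \ref{prelprop1} with $s=1$ through the one-sided correspondence $IS_p(K)\ge 2(n+1)\,IS_\lambda^{(1)}(f)$ (and its analogues), with the factors of $2$ cancelling as you indicate. The only caveat is that $L_{K_1(f)}=L_f$ is not an exact identity (Lemma \ref{lemLphi} gives the precise relation), but this discrepancy, like the mismatch between the exponents $2n\lambda$ and $2(n+1)\lambda$, is absorbed into the absolute constant $c$, just as in the paper.
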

\vskip 2mm
\begin{proof}
We show how to get (i) as a consequence of Theorem \ref{prelprop} (i) for $s=1$.  Similarly one gets (ii) from Theorem \ref{prelprop} (ii) and (iii) and (iv) from 
Proposition \ref{prelprop1}.
The inequality from above is just a consequence of the affine isoperimetric inequality.
For the inequality from below, we note that we can assume without loss of generality that  $K \subset \mathbb{R}^{n+1}$  is symmetric about  $\mathbb{R}^n$.
Then $\partial K$ is described by a $1$-concave function $f$ in the upper halfplane and by $-f$ in the lower halfplane.
Thus $|K|= |K_1(f)|= 2 \int_{\mathbb{R}^n} f dx $.  Moreover, 
$$
IS_\lam^{(s)} (g_e^{(1)}) = OS_\lam^{(1)} (g_e^{(1)})=is_\lam^{(1)} (g_e^{(1)})=os_\lam^{(1)} (g_e^{(1)})
$$
and (i) follows from Theorem \ref{prelprop} (i), as $\lam=\frac{p}{n+1+p}$.
\end{proof}

\vskip 4mm
\noindent

\section{Proofs}\label{proofs}

\subsection{Proof of Theorem \ref{ranges}.}
\begin{proof}
(i) {\bf The case $OS_\lam^{(s)}(f)$} \label{OS}
\par
\noindent
If  $\lam=\frac{1}{2}$, then  it follows from Lemma \ref{lemma-phie} that
\begin{eqnarray*}
OS_{\frac{1}{2}}^{(s)}(f) &=&  \sup_{h \geq f, h \in {\mathit C}^+_{s}(\R^n)} as_{\frac{1}{2}} ^{(s)}(h)  \geq  
\sup_{g^{(s)}_{e,R}  \geq f} as_{\frac{1}{2}} ^{(s)}(g^{(s)}_{e,R})=
 as_{\frac{1}{2}}^{(s)} (g^{(s)}_e) = \int_{\mathbb{R}^n} g^{(s)}_e \, dx.
\end{eqnarray*}
Note that a function $g^{(s)}_{e,R}  \geq f$ exists as $f$ is bounded and $S_f $ is bounded.
On the other hand, 
by Proposition \ref{Ungleichung1} and using again the above  mentioned form of the functional Blaschke-Santal\'o inequality (\ref{Bl-Sa}), we get
\begin{eqnarray*}
OS_{\frac{1}{2}}^{(s)}(f) &=& \sup_{h \geq f, h \in {\mathit C}^+_{s}(\R^n)} as_{\frac{1}{2}} ^{(s)}(h) 
\leq \sup_{h \geq f, h\in C^+_{s}(\mathbb{R}^n)}   \left(\int_{\R^n} h \  dx \right)^{\frac{1}{2}}
 \left( \int_{\R^n} h^\circ_{(s)}  \  dx \right)^\frac{1}{2}\\
 &\leq& \int_{\R^n}   \left(1- s |x|^2\right)^\frac{1}{2 s}\, dx =  \int_{\mathbb{R}^n} g^{(s)}_e \, dx.
 \end{eqnarray*}
All together
$$
OS_{\frac{1}{2}}^{(s)}(f) =\int_{\mathbb{R}^n} g^{(s)}_e \, dx = OS_{\frac{1}{2}}^{(s)}\left(g^{(s)}_e \right).
$$
\par
\noindent
For $\lam=1$, we get by (\ref{def:asa-1}), and  as $h \geq f$ is equivalent to $h_{(s)}^\circ \leq f_{(s)}^\circ$,  
\begin{eqnarray*}
OS_{1}^{(s)}(f)  
&=& \sup_{h \geq f, h \in {\mathit C}^+_{s}(\R^n)} as_{1} ^{(s)}(h) =  \sup_{h_{(s)}^{\circ}\leq f_{(s)}^{\circ}, h \in {\mathit C}^+_{s}(\R^n)} \int_{S_{f_{(s)}^{\circ}}} h_{(s)}^{\circ} (x) dx\\
&=&
\int_{S_{f_{(s)}^{\circ}}} f_{(s)}^{\circ} (x) dx >0.
\end{eqnarray*}
\par
\noindent
If $0 \leq  \lam < \frac{1}{2}$,  then for all $f$,  $OS_\lam^{(s)}(f) = \infty$.
Indeed, by Lemma \ref{lemma-phie}, and the fact that $1-2\lambda >0$, 
\begin{eqnarray*}
OS_\lam^{(s)}(f)= \sup_{h \geq f, h \in {\mathit C}^+_{s}(\R^n)} as_\lambda^{(s)}(h) \geq   \sup_{ g^{(s)}_{e,R} \geq f} as_\lam^{(s)}(g^{(s)}_{e,R}) = \sup_{R } R^{(n+\frac{1}{s})(1-2\lambda)} \int_{\mathbb{R}^n} g^{(s)}_{e}\, dx  = \infty.
\end{eqnarray*}
\par
\noindent
If $- \infty < \lam <0$,  then for all $f$,  $OS_\lam^{(s)}(f) = \infty$.
By Lemma \ref{lemma-phie}, 
\begin{eqnarray*}
OS_\lam^{(s)}(f) \geq   \sup_{  g^{(s)}_{e,R} \geq f} as_\lam^{(s)}( g^{(s)}_{e,R} ) = \sup_{R } R^{(n+\frac{1}{s})(1-2\lambda)} \int_{\mathbb{R}^n} g^{(s)}_{e}\, dx  = \infty.
\end{eqnarray*}
If $1 < \lam <  \infty$,  then for all $f$,  $OS_\lam(f) = \infty$. 
\par
\noindent
By definition, 
$OS_\lam^{(s)}(f)= \sup_{h \geq f, h \in {\mathit C}^+_{s}(\R^n)} as_\lambda^{(s)}(h)$. 
For $\varepsilon , R$, we now consider a specific $h\in  {\mathit C}_s^+$, namely, $h_{R,\varepsilon}$ such that 
\begin{align}\label{part1!concrete}
h_{R, \varepsilon}^s(x) = R-(\varepsilon + |x|^2)^\frac{1}{2}.
\end{align} 
We have that $S(h_{R, \varepsilon}) = (R^2-\varepsilon)^\frac{1}{2} B^n_2$.  We choose $R$ and $\varepsilon$ so that $S(f) \subset S(h_{R, \varepsilon})$ and such that 
$h _{R, \varepsilon}\geq f$.
Then
\begin{align*}
h_{R, \varepsilon}^s(x) - \langle x, \nabla h_{R, \varepsilon}^s(x)\rangle = \frac{R(\varepsilon + |x|^2)^\frac{1}{2} - \varepsilon}{(\varepsilon +|x|^2)^\frac{1}{2}}
\end{align*}
and 
 \begin{align*}
\det (-\nabla^2 h_{R, \varepsilon}^s(x)) = \frac{\det (A_\varepsilon)}{(\varepsilon +|x|^2)^\frac{3n}{2}}, 
\end{align*}
where 
 \begin{align*}\label{part3!concrete}
A_\varepsilon = 
\begin{bmatrix}
\varepsilon + |x|^2 - x_1^2 & x_1x_2 & \cdots & x_1x_n \\
x_2x_1 & \varepsilon + |x|^2-x_2^2 & \cdots & x_2x_n\\
\vdots & & \ddots & \vdots \\
x_nx_1& x_nx_2 & \cdots & \varepsilon +|x|^2 -x_n^2
\end{bmatrix}.
\end{align*}
Hence, for $R, \varepsilon$ such that $(R/2)^2 \geq 2 \varepsilon$, which implies that  $(R^2 - \varepsilon )^\frac{1}{2} \geq \varepsilon^\frac{1}{2}$, we get
\begin{eqnarray*}
OS_\lam^{(s)}(f)&=&\sup_{h \geq f, h \in {\mathit C}^+_{s}(\R^n)} as_\lambda^{(s)}(h) \geq \sup_{R, \varepsilon} \, as_\lambda^{(s)}(h_{R, \varepsilon})\\
&\geq & \sup_{R, \varepsilon}\frac{1}{s^{n\lambda}(1+ns)}\int_{\sqrt{\varepsilon}B_2^n}\frac{(R-(\varepsilon+|x|^2)^\frac{1}{2})^{\left(1-\frac{1}{s}\right)(\lambda - 1)}(\det (A_\varepsilon))^\lambda}{(\varepsilon+|x|^2)^\frac{3n\lambda}{2}\frac{(R(\varepsilon+|x|^2)^\frac{1}{2}-\varepsilon)^{\lambda\left( n+\frac{1}{s} +1\right) - 1}}{(\varepsilon + |x|^2)^{\frac{1}{2}\left(\lambda\left( n+\frac{1}{s}+1\right)-1\right)}}}dx.
\end{eqnarray*}
Now we use that $\frac{R(\varepsilon+|x|^2)^\frac{1}{2} - \varepsilon}{(\varepsilon + |x|^2)^{\frac{1}{2}}} \leq R$. This gives
\begin{eqnarray*}
OS_\lam^{(s)}(f)&\geq& \sup_{R, \varepsilon} \frac{R^{-(\lambda (n+\frac{1}{s}+1)-1)}}{s^{n\lambda}(1+ns)}\int_{\sqrt{\varepsilon}B_2^n} 
\frac{\left(R-\varepsilon^\frac{1}{2} \left(1+\left(\frac{|x|}{\sqrt{\varepsilon}}\right)^2\right)^\frac{1}{2}\right)^{\left(1-\frac{1}{s}\right)(\lambda - 1)} \varepsilon^{n \lam}(\det (A))^\lambda}{\varepsilon^\frac{3 n \lam}{2} \left(1+\left(\frac{|x|}{\sqrt{\varepsilon}}\right)^2\right)^\frac{3n\lambda}{2}} dx, 
\end{eqnarray*}
where 
 \begin{align*}
A = 
\begin{bmatrix}
1 + \left(\frac{|x|}{\sqrt{\varepsilon}}\right)^2 - \left(\frac{x_1}{\sqrt{\varepsilon}}\right)^2 & \frac{x_1x_2}{\varepsilon} & \cdots & \frac{x_1x_n}{\varepsilon}\\
\frac{x_2 x_1}{\varepsilon} & 1 + \left(\frac{|x|}{\sqrt{\varepsilon}}\right)^2 - \left(\frac{x_2}{\sqrt{\varepsilon}}\right)^2& \cdots & \frac{x_2 x_n}{\varepsilon}\\
\vdots & & \ddots & \vdots \\
\frac{x_n x_1}{\varepsilon}&\frac{x_n x_2}{\varepsilon} & \cdots & 1 + \left(\frac{|x|}{\sqrt{\varepsilon}}\right)^2 - \left(\frac{x_n}{\sqrt{\varepsilon}}\right)^2
\end{bmatrix}.
\end{align*}
With the change of variable $\frac{x_i}{\sqrt{\varepsilon}}= y_i$, this becomes
\begin{eqnarray*}
OS_\lam^{(s)}(f)&\geq& \sup_{R, \varepsilon} \frac{R^{-(\lambda (n+\frac{1}{s}+1)-1)}}{s^{n\lambda}(1+ns) \varepsilon^{\frac{n}{2}(\lam-1)}}\int_{B_2^n} 
\frac{\left(R-\varepsilon^\frac{1}{2} \left(1+|y|^2\right)^\frac{1}{2}\right)^{\left(1-\frac{1}{s}\right)(\lambda - 1)} (\det (A))^\lambda}{ \left(1+\left(|y| \right)^2\right)^\frac{3n\lambda}{2}} dy.
\end{eqnarray*}
For all $s \leq 1$ we thus get
\begin{eqnarray*}
OS_\lam^{(s)}(f)&\geq& 
OS_\lam^{(1)}(f)\geq \sup_{R, \varepsilon} \frac{R^{-(\lambda (n+2)-1)}}{(1+n) \varepsilon^{\frac{n}{2}(\lam-1)}}\int_{B_2^n} 
\frac{ (\det (A))^\lambda}{ \left(1+|y|^2\right)^\frac{3n\lambda}{2}} dy\\
&\geq&\frac{1}{n+1} \left( \int_{B_2^n} 
\frac{ (\det (A))^\lambda}{ \left(1+|y|^2\right)^\frac{3n\lambda}{2}} dy \right) \, \sup_{R, \varepsilon} \frac{1}{R^{\lambda (n+2)-1} \varepsilon^{\frac{n}{2}(\lam-1)}} = \infty,
\end{eqnarray*}
if we choose e.g.,  $\varepsilon = R^{- \frac{2 \lam(n+2)}{(\lam-1)(n-1)}}$.
If $s>1$, then, also choosing $R, \varepsilon$ such  that $R/2 \geq (2 \varepsilon)^{(1/2)}$, we get 
\begin{eqnarray*}
OS_\lam^{(s)}(f)&\geq& \frac{1}{s^{n\lambda}(1+ns)}  \int_{B_2^n} 
\frac{ (\det (A))^\lambda}{ \left(1+|y|^2\right)^\frac{3n\lambda}{2}} dy  \, \sup_{R, \varepsilon}  \frac{\left(R/2 \right)^{\left(1-\frac{1}{s}\right)(\lambda - 1)}}{ R^{\lambda (n+\frac{1}{s}+1)-1}\, \varepsilon^{\frac{n}{2}(\lam-1)}} = \infty,
\end{eqnarray*}
with an adequate  choice for $R, \varepsilon$.
\par
\noindent
Let now $\lam \in (1/2,1)$. By Proposition \ref{Ungleichung1}, we get
\begin{eqnarray*}
OS_\lam^{(s)}(f)&= &\sup_{h \geq f, h \in {\mathit C}^+_{s}(\R^n)} as_\lambda^{(s)}(h) \leq \sup_{h \geq f, h \in {\mathit C}^+_{s}(\R^n)} \left(\int_{\R^n}  h\, dx \right)^{1-\lam}
 \left( \int_{\R^n}  h_{(s)}^\circ  \  dx \right)^{ \lam} \\
 &=&  \sup_{h \geq f, h \in {\mathit C}^+_{s}(\R^n)} \left(\int_{\R^n}  h\, dx \right)^{1- 2\lam}
 \left(\int_{\R^n}  h\, dx \right)^{\lam}  \left( \int_{\R^n}  h_{(s)}^\circ  \  dx \right)^{ \lam}.
\end{eqnarray*}
Now we use the estimate (\ref{Bl-Sa}). Altogether we get that
\begin{eqnarray*}
OS_\lam^{(s)}(f)
 &\leq&  \left(\int_{\R^n} f \,  dx \right)^{1-2 \lam} \left(\int_{\R^n}  g^{(s)} _e dx\right)^{2 \lam} < \infty.
\end{eqnarray*}
On the other hand, let $R>0$ be such that $g^{(s)}_{e,R} \geq f$. Then
\begin{eqnarray*}
OS_\lam^{(s)}(f) \geq   as_\lam^{(s)}(g^{(s)}_{e,R}) = R^{(n+\frac{1}{s})(1-2\lambda)} \int_{\mathbb{R}^n} g^{(s)}_{e}\, dx  >0.
\end{eqnarray*}
Thus $0 < OS_\lam^{(s)}(f) < \infty $ on $\lam \in (1/2,1)$ and thus:
\par
\noindent
{\bf Conclusion}.  
The relevant $\lam$-range for $OS_\lam^{(s)}$ is $\lam \in [\frac{1}{2},1]$. 
\vskip 2mm
\noindent
(ii) {\bf The case $IS_\lam^{(s)}(f)$}. \label{IS}
\par
\noindent
Let $\lam=0$. By (\ref{def:asa-0}), 
\begin{eqnarray*}
IS_0^{(s)}(f) &=&  \sup_{h \leq f, h \in {\mathit C}^+_{s}(\R^n)}  as_{0} ^{(s)}(h) = \sup_{h \leq f, h \in {\mathit C}^+_{s}(\R^n)}   \int_{\mathbb{R}^n} h \, dx =  \int_{\mathbb{R}^n} f \, dx.
\end{eqnarray*}
\par
\noindent
If  $\lam=\frac{1}{2}$,
it follows from 
Proposition \ref{Ungleichung1}  that 
\begin{eqnarray*}
IS_{\frac{1}{2}}^{(s)}(f) &=&   \sup_{h \leq f, h \in C^+_{s}} as_{\frac{1}{2}} ^{(s)}(h) 
\leq \sup_{h \leq f, h\in C^+_{s}(\mathbb{R}^n)}   \left(\int_{\R^n} h \  dx \right)^{\frac{1}{2}}
 \left( \int_{\R^n} h^\circ_{(s)}  \  dx \right)^\frac{1}{2}.
 \end{eqnarray*}
We  now 
apply again the above general form of the functional Blaschke-Santal\'o inequality (\ref{concsantalo}), again 
with $f=h$, $g=h^\circ _{(s)}$ and $\rho(t) = (1-st)^\frac{1}{2s}$. By definition (\ref{Leg})  of $h^\circ _{(s)}$,
$$
h(x) h^\circ _{(s)}(y) \leq \left(1- s |x|^2\right)^\frac{1}{s}.
$$
Therefore
$$
\int_{\R^n} h \, dx  \int_{\R^n} h^\circ _{(s)} \, dx   \leq \left(\int_{\R^n}   \left(1- s |x|^2\right)^\frac{1}{2 s}\, dx\right)^2
$$  
and hence
\begin{eqnarray*}
IS_{\frac{1}{2}}^{(s)}(f) \leq  \int_{\mathbb{R}^n} g^{(s)}_e \, dx.
\end{eqnarray*}
On the other hand by Lemma \ref{lemma-phie}, 
\begin{eqnarray*}
IS_{\frac{1}{2}}^{(s)}(f) &\geq&   \sup_{g^{(s)}_{e,r} \leq f} as_{\frac{1}{2}} ^{(s)}(g^{(s)}_{e,r}) = as_{\frac{1}{2}} ^{(s)}(g^{(s)}_{e}) = \int_{\mathbb{R}^n} g^{(s)}_e \, dx.
\end{eqnarray*}
Functions $g^{(s)}_{e,r} \leq f$ exist,  as $0 \in \text{int}(S_f)$ and $f(0) >0$.
All together
$$
IS_{\frac{1}{2}}^{(s)}(f) =\int_{\mathbb{R}^n} g^{(s)}_e \, dx = IS_{\frac{1}{2}}^{(s)}\left(g^{(s)}_e \right).
$$
\par
\noindent
If $\frac{1}{2} < \lam $, then $IS_\lam^{(s)}(f) = \infty$. By Proposition \ref{properties} and Lemma \ref{lemma-phie}, 
\begin{eqnarray*}
IS_\lam^{(s)}(f) \geq   \sup_{\alpha g^{(s)}_{e,r}  \leq f } as_\lam^{(s)}(\alpha g^{(s)}_{e,r}) =  \int _{\mathbb{R}^n}g^{(s)}_e dx \sup_{\alpha g^{(s)}_{e,r}  \leq f } \alpha^{1-2\lam} r^{(n+\frac{1}{s})(1-2 \lam)}= \infty,
\end{eqnarray*}
\par
\noindent
If $- \infty < \lam <0$,  then for all $f$,  $IS_\lam^{(s)}(f) = \infty$.
Indeed, by Proposition \ref{properties} (iii), 
\begin{eqnarray*}
IS_\lam^{(s)}(f) &= & \sup_{h \in C^+_{s},  h \leq f} as_\lam^{(s)}(h) =  \sup_{h \in C^+_{s},  h \leq f } as_{1-\lam}^{(s)}(h_{(s)}^\circ) =\sup_{h_{(s)}^\circ \in C^+_{s},  h_{(s)}^\circ \geq f_{(s)}^\circ } as_{1-\lam}^{(s)}(h_{(s)}^\circ) \\
&= &OS^{(s)}_{1-\lam}(f_{(s)}^\circ) = \infty.
\end{eqnarray*}
The last identity holds by the range of $OS_\lam^{(s)}$.
\par
\noindent
As $0 < OS_\lam^{(s)}(f) < \infty $ on $\lam \in (1/2,1)$, it follows by duality that $0 < IS_\lam^{(s)}(f) < \infty $ on $(0, 1/2)$. 
\par
\noindent
{\bf Conclusion}. 
The relevant $\lam$-range for $IS_\lam^{(s)}$ is $\lam \in [0,\frac{1}{2}]$. \\
\vskip 2mm
\noindent
(iii) {\bf The cases $is_\lam^{(s)}(f)$ and $os_\lam^{(s)}(f)$.} \label{os}
\par
\noindent
We treat these two cases together.
\par
\noindent
For $\lam=1$, we get by Proposition \ref{Ungleichung1}, 
\begin{eqnarray*}
is_{1}^{(s)}(f) =  \inf_{h \in C_s^+(\mathbb{R}^n), h \leq f} as_{1} ^{(s)}(h) = \inf_{h \in C_s^+(\mathbb{R}^n), h_{(s)}^\circ \geq f_{(s)}^\circ} as_{0} ^{(s)}(h_{(s)}^\circ) =
os_{0}^{(s)}(f_{(s)}^\circ) = \int_{\mathbb{R}^n }  f_{(s)}^\circ dx.
\end{eqnarray*}
\par
\noindent
If  $\lam=0$, then $os_{0}^{(s)}(f) = \int_{\mathbb{R}^n} f dx$. Indeed, 
\begin{eqnarray*}
os_{0}^{(s)}(f) &=&  \inf_{h \geq f, h \in {\mathit C}^+_{s}(\R^n)} as_0^{(s)}(h) = \inf_{h \geq f, h \in {\mathit C}^+_{s}(\R^n)} \int_{\mathbb{R}^n} h dx 
= \int_{\mathbb{R}^n}  f dx.
\end{eqnarray*}
\par
\noindent
Let $ - \infty < \lam < \frac{1}{2}$. Then, again with Proposition \ref{properties},   
\begin{eqnarray*}
0 \leq is_\lam^{(s)}(f) &=&  \inf_{h \in C_s^+(\mathbb{R}^n), h \leq f} as_{\lam} ^{(s)}(h) \leq  
 \inf_{\alpha \,  g^{(s)}_e \leq f} as_{\lam} ^{(s)}(\alpha \, g^{(s)}_e) 
 = \inf_{\alpha \,  g^{(s)}_e \leq f}  \alpha ^{1-2 \lam} \int_{\mathbb{R}^n} g^{(s)}_e dx\\
&= &0.
\end{eqnarray*}
By the duality relation Theorem \ref{basic} (iv), and as $(f_{(s)}^\circ)_{(s)}^\circ = f$, this is equivalent to $os_\lam^{(s)}(f)=0$ for $\lam \in (\frac{1}{2}, \infty)$.
\par
\noindent
We now show that 
\begin{equation}\label{isgrosser1/2}
is_\lam^{(s)}(f)=0, \hskip 10mm \frac{1}{2} < \lam < 1.
\end{equation}
By the duality relation Theorem \ref{basic} (iv), this is equivalent to showing that $os_\lam^{(s)}(f)=0$ for 
$\lam \in (0, \frac{1}{2})$.
\newline
For $R, b \in \mathbb{R_+}$, let $h^s_{R,b}(x) = b- R \, |x|^2$. We consider  only these $R, b$ such that $h_{R,b}(x)^s \geq f^s(x)$. Let first $s \leq 1$. Then
\begin{eqnarray} \label{Faelle}
0 \leq os_{\lam}^{(s)}(f) &\leq &os_{\lam}^{(1)}(f) =  \inf_{h \geq f, h \in {\mathit C}^+_{1}(\R^n)} as_{\lam}^{(1)} (h) \leq  \inf_{ h_{R,b}  \geq f} as_{\lam} ^{(1)}(h_{R,b})\nonumber \\
&=& \inf_{ h_{R,b}  \geq f}   \int_{\left(\frac{b}{R}\right)^\frac{1}{2}B^n_2} \, \frac{ (2\,R) ^{\lam n}}{\left( b+R \, |x|^2 \right)^{\lam(n+2)-1}}dx.
\end{eqnarray}
We  first treat the case that $\lam(n+2)-1>0$.  Then  $b+R \, |x|^2 \geq b$ for all $x$, and thus (\ref{Faelle}) becomes
\begin{eqnarray*} 
0 \leq os_{\lam}^{(s)}(f) &\leq&  \inf_{R,b}\,  2^{\lam n}  |B^n_2| \left(\frac{b}{R}\right)^{n (\frac{1}{2} -\lam)} b^{2(\frac{1}{2}-\lam)}.
\end{eqnarray*}
We then choose $R\geq b^5$ and the last inequality can  be  estimated  by
\begin{eqnarray*} 
0 \leq os_{\lam}^{(s)}(f) &\leq&  \inf_{b} \, |B^n_2|  \frac{2^{\lam n}}{b^{2(\frac{1}{2}-\lam)(2n-1)}} =0, 
\end{eqnarray*}
as $0< \lam  <\frac{1}{2}$.
Next we look at the case $\lam(n+2)-1 \leq 0$. Then we use  that $|x|^2 \leq \frac{b}{R}$ and get
\begin{eqnarray*} 
0 \leq os_{\lam}^{(s)}(f) &\leq &os_{\lam}^{(1)}(f) \leq  \inf_{ h_{R,b}  \geq f} \left(2 \, R \right)^{\lam n}  \int_{\left(\frac{b}{R}\right)^\frac{1}{2}B^n_2} \, \left( b+R \, |x|^2 \right)^{1-\lam(n+2)}dx\\
& \leq & \inf_{ R,b} \left(2 \, R \right)^{\lam n} |B^n_2| \left(\frac{b}{R}\right)^\frac{n}{2} \left(2b\right)^{1-\lam(n+2)} = 2^{1-2 \lam} |B^n_2| \inf_{ R,b}  \left(\frac{b^{n+2}}{R^n}\right)^\frac{1-2 \lam}{2}.
\end{eqnarray*}
If $\lam(n+2)-1 = 0$, we can choose $b=R^\frac{1}{2}$ and if $\lam(n+2)-1 < 0$, then we choose $b=R^\frac{n-1}{n+2}$ and get in both cases 
 that $os_{\lam}^{(s)}(f)=0$, as $0< \lam  <\frac{1}{2}$.
\par
\noindent
Now we consider the case when $s >1$. 
\begin{eqnarray} \label{WasSollDas}
os_{\lam}^{(s)}(f)  &\leq&  \inf_{ h_{R,b}  \geq f} as_{\lam} ^{(s)}(h_{R,b}) \nonumber\\
&=& \left(\frac{2}{s} \right)^{\lam n} \frac{1}{ns+1} \inf_{ h_{R,b}  \geq f} R^{\lam n}  
\int_{\left(\frac{b}{R}\right)^\frac{1}{2}B^n_2} \, \frac{\left( b-R \, |x|^2 \right)^{(1-\lam)(\frac{1}{s}-1)}}{\left( b+R \, |x|^2 \right)^{\lam(n+\frac{1}{s}+1)-1}}dx\nonumber \\
&=& \left(\frac{2}{s} \right)^{\lam n} \frac{1}{ns+1} \inf_{ R,b} \left[\left( \frac{b}{R} \right)^{ n}  b^\frac{2}{s}\right]^{\frac{1}{2} -\lam}
\int_{B^n_2} \, \frac{\left( 1- |x|^2 \right)^{(1-\lam)(\frac{1}{s}-1)}}{\left( 1+ \, |x|^2 \right)^{\lam(n+\frac{1}{s}+1)-1}}dx.
\end{eqnarray}
If $\lam(n+\frac{1}{s}+1)-1 \geq 0$, we estimate $1+ \, |x|^2 \geq 1$ and if $\lam(n+\frac{1}{s}+1)-1 < 0$, we estimate $1+ \, |x|^2 \leq 2$. In both cases we get
with an absolute constant $C$ that (\ref{WasSollDas}) is
$$
\leq C  \left(\frac{2}{s} \right)^{\lam n} \frac{|S^{n-1}| }{ns+1} B\left(\frac{1}{s}(1-\lam) +\lam, \frac{n}{2}\right)  \inf_{ R,b} \left[\left( \frac{b}{R} \right)^{ n}  b^\frac{2}{s}\right]^{\frac{1}{2} -\lam}, 
$$
where $B(x, y) = \int_0^1 t^{x-1} (1-t)^{y-1} dt$ is the Beta function. We choose e.g., $R=b^3$ and get that the latter expression is equal to $0$.
\par
\noindent
It remains to settle the case $\lam = 1/2$. We treat that together with  $is_{\frac{1}{2}}^{(s)}$ and will get that $0=is_{\frac{1}{2}}^{(s)}(f)= os_{\frac{1}{2}}^{(s)}(f_{(s)}^\circ)$.
By Proposition \ref{mono-asa} we have for $\mu > \frac{1}{2}$, 
\begin{eqnarray*}
is_{\frac{1}{2}}^{(s)}(f)&=&\inf_{h \leq f, h \in {\mathit C}^+_{s}(\R^n)} as_{\frac{1}{2}}^{(s)}(h) \leq \inf_{h \leq f, h \in {\mathit C}^+_{s}(\R^n)}\left(as_{\mu}^{(s)}(h)\right)^\frac{1}{2 \mu} \left(\int_{\mathbb{R}^n} h\right)^{1-\frac{1}{2 \mu}}\\
&\leq &\left(\int_{\mathbb{R}^n} f \right)^{1-\frac{1}{2 \mu}} \left(is_{\mu}^{(s)}(f)\right)^\frac{1}{2 \mu} =0.
\end{eqnarray*}
The last equality holds by (\ref{isgrosser1/2}).
\par
\noindent
Finally, similarly as for $OS_\lam^{(s)}$ one shows that $0 < os_\lam^{(s)}(f) < \infty $ on $(- \infty, 0)$. 
\vskip 2mm
\noindent
{\bf Conclusion}. The relevant $\lam$-range for $os_\lam^{(s)}$ is $\lam \in (- \infty, 0]$ and the relevant range  for the inner minimal affine surface area $is_\lam^{(s)}$ is $[1, \infty)$.
\vskip 3mm
\noindent
Now we show that the extrema are attained when $\frac{1}{s} \in \mathbb{N}$.
We treat the $IS_\lam^{(s)}$ case. The others are done accordingly. Let $\lam \in [0,1/2]$. Let $p=\left(n+\frac{1}{s}\right) \frac {\lam}{1-\lam}$.
\begin{eqnarray*}
 IS_\lam^{(s)}(f) &=&  \sup_{h \in C_s^+(\mathbb{R}^n), h \leq f} as_{\lam} ^{(s)}(h) =  \sup_{K_s(h) \subset K_s(f)} \frac{as_{p}\left(K_s(h)\right)}{s^\frac{n}{2}(n+\frac{1}{s}) |B^\frac{1}{s}_2|} 
 = \frac{as_{p}\left(K_s(h_0)\right)}{s^\frac{n}{2} (n+\frac{1}{s}) |B^\frac{1}{s}_2|} \\
 &=&  as_{\lam} ^{(s)}(h_0).
\end{eqnarray*} 
The supremum is attained by upper semi-continuity of the $p$-affine surface areas for bodies \cite{Lutwak96}, see also \cite{Ludwig2010}.
\par
\noindent
This finishes the proof of Proposition \ref{ranges}.
\end{proof}
\vskip 4mm
\noindent

\subsection{Proof of Theorem \ref{basic}.}

\begin{proof}
\par
\noindent
(i) The invariance properties follow from Proposition \ref{properties}.
\par
\noindent
(ii) As ${\mathit C}_{s_2}^+(\R^n) \subset {\mathit C}_{s_1}^+(\R^n)$, $IS_\lam^{(s_1)} (f)$ is defined and
\begin{eqnarray*}
IS_\lam^{(s_1)} (f)=\sup_{h \leq f,  h\in {\mathit C}_{s_1}^+(\R^n)} as_\lam^{(s_1)}(h) \geq \sup_{h \leq f, h \in {\mathit C}_{s_2}^+(\R^n)} as_\lam^{(s_2)}(h)=
IS_\lam^{(s_2)} (f).
\end{eqnarray*} 
The arguments for $OS^{(s)}_\lam$, $os^{(s)}_\lam$ and $is^{(s)}_\lam$ are  the same.
\par
\noindent
(iii) For $\lam \in [0, \frac{1}{2}]$,  we have by Proposition \ref{Ungleichung1}, 
\begin{eqnarray*}
IS_{\lam}^{(s)}(f) &=&  \sup_{h \leq f, h \in {\mathit C}^+_{s}(\R^n) } as_{\lam} ^{(s)}(h) \\
&\leq& \sup_{h \leq f, h \in {\mathit C}^+_{s}(\R^n)} \left(\int_{\R^n}  h\, dx \right)^{1-\lam}
 \left( \int_{\R^n}  h_{(s)}^\circ  \  dx \right)^{ \lam}.
\end{eqnarray*}
Now we apply,  as above,  the general form of the functional Blaschke-Santal\'o inequality (\ref{concsantalo}), 
again with $f=h$, $g=h_{(s)}^\circ$ and $\rho(t) = (1-s t)^\frac{1}{2s}$
and  get as above
$$
\int_{\R^n} h \, dx  \int_{\R^n} h_{(s)}^\circ \, dx   \leq \left(\int_{\R^n}   \left(1- s |x|^2\right)^\frac{1}{2 s}\, dx\right)^2.
$$  
Hence
\begin{eqnarray*}
IS_{\lam}^{(s)}(f) &\leq&  \sup_{h \leq f, h \in {\mathit C}^+_{s}(\R^n)}  \left(\int_{\R^n}  h \,  dx \right)^{1-2 \lam} \left(\int_{\R^n}   \left(1- s |x|^2\right)^\frac{1}{2 s}\, dx\right)^{2 \lam}\\
&\leq&  \left(\int_{\R^n}  f \,  dx \right)^{1-2 \lam} \left(\int_{\R^n}  g^{(s)} _e dx\right)^{2 \lam}.
\end{eqnarray*}
As $IS_\lam^{(s)}(g^{(s)}_e)= \int_{\mathbb{R}^n} g^{(s)}_e dx$ by Lemma \ref{ge-prop},  the statement  follows.
\par
\noindent
Next we address the equality cases. 
By the proof of Theorem \ref{ranges},  we have for all $f$ that $IS_{0}^{(s)}(f)= \int_{\mathbb{R}^n} f dx$, which settles the case of $\lam=0$. 
Likewise, by the proof of Theorem \ref{ranges},  we have for all $f$ that $IS_{\frac{1}{2}}^{(s)}(f)= \int_{\mathbb{R}^n} g^{(s)}_e dx$, which settles the case of $\lam=\frac{1}{2}$. 
\par
\noindent
The proofs for  $OS_\lam^{(s)}$, $os_\lam^{(s)}$ and $is^{(s)}_\lam$ are accordingly. 
\vskip 2mm
\noindent
(iv)  Let $\lam \in [0, 1/2]$. By Proposition \ref{properties}, 
\begin{eqnarray*}
IS_{\lam}^{(s)}(f) &=&  \sup_{h \leq f, h \in {\mathit C}^+_{s}(\R^n)} as^{(s)}_{\lam}(h) =  \sup_{h \leq f, h \in {\mathit C}^+_{s}(\R^n)} as^{(s)}_{1-\lam}(h_{(s)}^\circ)\\
& =&
\sup_{h_{(s)}^\circ \geq f_{(s)}^\circ, h_{(s)}^\circ \in {\mathit C}^+_{s}(\R^n)} as^{(s)}_{1-\lam}(h_{(s)}^\circ) = OS_{1-\lam}^{(s)}(f_{(s)}^\circ),
\end{eqnarray*}
and similarly for the other quantities.
\vskip 2mm
\noindent
(v) Let $\mu \geq \lam >0$. Then, by Proposition \ref{mono-asa}, 
\begin{eqnarray*}
IS_{\lam}^{(s)}(f) &=&  \sup_{h \leq f, h \in {\mathit C}^+_{s}(\R^n)} as^{(s)}_{\lam}(h) \leq \sup_{h \leq f, h \in {\mathit C}^+_{s}(\R^n)}\left( as^{(s)}_{\mu}(h)\right)^\frac{\lam}{\mu} 
\left(\int _{\mathbb{R}^n} h\right)^{1-\frac{\lam}{\mu}}\\
&\leq & \left(\int _{\mathbb{R}^n} f \right)^{1-\frac{\lam}{\mu}}  \left(IS_{\mu}^{(s)}(f) \right)^\frac{\lam}{\mu}, 
\end{eqnarray*}
and from this the claim follows.
Let $\nu \geq \rho$, $\nu \in [\frac{1}{2}, 1]$. Let $\lam = 1-\nu$ and $\mu = 1-\rho$. Then $\mu \geq \lam$ and by the monotonicity in $\lam$ of $IS_{\lam}^{(s)}(f) $ and the duality relation (iv), 
\begin{eqnarray*}
OS_{\nu}^{(s)}(f) &=& IS_{\lam}^{(s)}(f_{(s)}^\circ) \leq \left(\int_{\mathbb{R}^n} f_{(s)}^\circ\right)^{1-\frac{\lam}{\mu}} \left(IS_{\mu}^{(s)}(f_{(s)}^\circ)\right)^\frac{\lam}{\mu} \\
&=& \left(\int_{\mathbb{R}^n} f_{(s)}^\circ \right)^{1-\frac{\lam}{\mu}} \left(OS_{\rho}^{(s)}(f) \right)^\frac{\lam}{\mu},
\end{eqnarray*}
and the claim follows.
\par
\noindent
The statements for $is_{\lam}^{(s)}(f)$ and $os_{\lam}^{(s)}(f)$ are proved similarly.
\end{proof}

\vskip 4mm
\noindent
\subsection{Proof of Theorem \ref{prelprop} and  Proposition \ref{prelprop1}.} \label{subsection5.3}

For the proof of these theorems we need another lemma. We  recall the  notion of isotropy constant.
The isotropic constant 
$L_K^2$ of a convex body  $K$ in $\mathbb{R}^n$, see, e.g., \cite{ArtsteinGiannopoulosMilmanBook, BGVV14}, is  defined by 
\begin{equation} \label{LK}
n L_K^2= \min \left\{\frac{1}{\mbox{vol}_{n}\left(TK\right) ^{1 + \frac{2}{n}}} \int_{a+TK} |x | ^2 dx : a \in \mathbb{R}^n, T \in GL(n) \right\}.
\end{equation}
\vskip 2mm
\noindent
We will also need the isotropic constant of an $s$-concave function $f$ on $\mathbb{R}^n$. 
If $f$ has  finite, positive integral,   its inertia- or covariance-matrix $Cov(f)$ is  the matrix with entries
\begin{equation}\label{Cov}
\left[Cov(f)\right]_{ i,j }=\frac{\int_{\mathbb{R}^n} x_i x_j f(x) dx}{\int_{\mathbb{R}^n} f (x)dx} - 
\frac{\int_{\mathbb{R}^n} x_i f(x) dx}{\int_{\mathbb{R}^n} f (x)dx} \, \frac{\int_{\mathbb{R}^n}  x_j f(x) dx}{\int_{\mathbb{R}^n} f(x)dx}, 
\end{equation}
see, e.g., \cite{ArtsteinGiannopoulosMilmanBook, BGVV14}.
Then the isotropic constant $L_f$ is defined as (e.g.,  \cite{ArtsteinGiannopoulosMilmanBook, BGVV14}), 
\begin{equation}\label{Lphi}
L_f = \left(\frac{\|f\|_{\infty}}{\int_{\mathbb{R}^n} f (x)dx} \right)^\frac{1}{n}\, \left( \det \, Cov(f)\right)^\frac{1}{2n}, 
\end{equation}
where $\|f\|_{\infty} = \sup_{x \in \mathbb{R}^n} f(x)$.
\par
\noindent
We say that $f$  isotropic if  $\int_{\mathbb{R}^n} f dx = \alpha$ for some constant $\alpha>0$, its barycenter is at the origin and $\int_{\mathbb{R}^n} x_ix_j f dx = \delta_{ij}$ for all $i,j =1, \dots, n$. Every $s$-concave function  $f$ with finite positive integral has an isotropic image: there exists an affine  isomorphism
$T : \mathbb{R}^n \to \mathbb{R}^n$ and a positive number $a$ such that $a f \circ T$ is isotropic.
We also remark that if $f$ is an $s$-concave isotropic function, then
\begin{equation}\label{norm}
\|f\|_{\infty}^\frac{1}{n} \geq  c,
\end{equation}
where $c$ is an absolute constant.
\vskip 2mm
\noindent
{\bf Remarks.}
\par
\noindent
1. Note  that if $K$ is a convex body, then $L_K=L_{\mathbbm{1}_K}$.  
\vskip 2mm
\noindent
2. Similar to the isotropy constant conjecture for convex bodies, one asks the isotropy constant conjecture for functions:
there is an absolute constant $C>0$ such that for any isotropic log-concave (the case $s=0$) density $f : \mathbb{R}^n \to [0,\infty)$ we have that
$\|f\|_{\infty}^\frac{1}{n} \leq C$ and similar observations hold as in the case of bodies.
\vskip 3mm
\noindent
We want to compare the isotropic constants of an $s$-concave function $f$ with that of its associated body  $K_s(f)$.
The next lemma states how $L_f$ and $L_{K_s(f)}$ are related. We could not find a proof for this and for completeness give one in an appendix.
\vskip 3mm
\noindent
\begin{lem}\label{lemLphi}
 Let $s>0$ be such that $\frac{1}{s} \in \mathbb{N}$. Let $ f \in {\mathit C}_s(\R^n)$.  Then
\par
(i) \, \, $L_{K_s(f)}^{n+\frac{1}{s}}= L_f^n \, \left(\frac{\left( \int_{\mathbb{R}^n} f^{2s+1} dx \right)^\frac{1}{2s}}{2^\frac{1}{2s}\left(1+\frac{1}{2s}\right)^{\frac{1}{2s}}  \|f\|_\infty \left( \int_{\mathbb{R}^n} f dx \right)^\frac{1}{2s}\text{vol}_\frac{1}{s}\left(B_2^\frac{1}{s}\right)}\right).$
\par
(ii)\, \,  $L_{K_s(f)^\circ}^{n+\frac{1}{s}}= L_{f^\circ_{(s)}}^n \, \left(\frac{\left( \int_{\mathbb{R}^n}\left(f_{(s)}^\circ\right)^{2s+1} dx \right)^\frac{1}{2s}}{2^\frac{1}{2s}\left(1+\frac{1}{2s}\right)^{\frac{1}{2s}}  \|f_{(s)}^\circ \|_\infty \left( \int_{\mathbb{R}^n} f ^\circ_{(s)}dx \right)^\frac{1}{2s}\text{vol}_\frac{1}{s}\left(B_2^\frac{1}{s}\right)}\right).$
\end{lem}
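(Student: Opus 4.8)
The plan is to reduce the lemma to two elementary computations — the volume and the covariance matrix of the body $K_s(f)$ — by way of a standard reformulation of the isotropic constant. Write $N=n+\tfrac1s$. Carrying out the minimization in (\ref{LK}): the optimal translation centers $TK$, whence $\int_{a+TK}|x|^2\,dx$ becomes $\text{vol}_N(TK)\,\text{tr}(\text{Cov}(TK))$; since $\text{Cov}(TK)=T\,\text{Cov}(K)\,T^t$, and the AM--GM inequality gives $\text{tr}(M)\ge N(\det M)^{1/N}$ for $M\succ 0$ with equality when $M$ is a multiple of the identity, one obtains the well-known identity
\[
L_K^{\,N} \;=\; \frac{\left(\det\text{Cov}(K)\right)^{1/2}}{\text{vol}_N(K)},
\]
valid for every convex body $K\subset\mathbb{R}^N$, where $\text{Cov}(K)$ is the covariance matrix of the uniform probability measure on $K$. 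Applying this with $K=K_s(f)$ and recalling $\text{vol}_N(K_s(f))=s^{n/2}\,\text{vol}_{1/s}(B_2^{1/s})\int_{\mathbb{R}^n}f\,dx$ from (\ref{vol-Ks}), the only remaining task is to compute $\det\text{Cov}(K_s(f))$.

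To do this I would split the coordinates of $\mathbb{R}^{n+1/s}$ as $(x,y)$ with $x\in\mathbb{R}^n$ and $y\in\mathbb{R}^{1/s}$. Over each fixed $x$ the fibre of $K_s(f)$ is the Euclidean ball $\{y:|y|\le f^s(x/\sqrt s)\}$, symmetric under $y\mapsto -y$; hence the $y$-part of the barycenter and all mixed $x$--$y$ entries of $\text{Cov}(K_s(f))$ vanish, so the matrix is block diagonal. Its $x$-marginal has density proportional to $f^s(x/\sqrt s)^{1/s}=f(x/\sqrt s)$ (using $s\cdot\tfrac1s=1$), which after the substitution $x=\sqrt s\,u$ is the normalized density of $f$, so the $n\times n$ block equals $s\,\text{Cov}(f)$. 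For the $\tfrac1s\times\tfrac1s$ block, fibre integration together with $\int_{|y|\le\rho}y_ky_\ell\,dy=\frac{\delta_{k\ell}}{\frac1s+2}\,\rho^{\frac1s+2}\,\text{vol}_{1/s}(B_2^{1/s})$ and $\rho^{\frac1s+2}=f^s(x/\sqrt s)^{\frac1s+2}=f(x/\sqrt s)^{2s+1}$ (since $s(\tfrac1s+2)=2s+1$) shows this block is $\dfrac{\int_{\mathbb{R}^n}f^{2s+1}dx}{(\frac1s+2)\int_{\mathbb{R}^n}f\,dx}$ times the identity. Multiplying the two block determinants,
\[
\det\text{Cov}(K_s(f)) \;=\; s^{n}\,\det\text{Cov}(f)\left(\frac{\int_{\mathbb{R}^n}f^{2s+1}dx}{\bigl(\frac1s+2\bigr)\int_{\mathbb{R}^n}f\,dx}\right)^{1/s}.
\]

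Assembling (i): I would substitute this identity and (\ref{vol-Ks}) into the displayed formula for $L_{K_s(f)}^{\,N}$ — the factors $s^{n/2}$ cancel — then replace $(\det\text{Cov}(f))^{1/2}$ by $L_f^{\,n}\,\dfrac{\int_{\mathbb{R}^n}f\,dx}{\|f\|_\infty}$ from (\ref{Lphi}), and finally rewrite $\bigl(\tfrac1s+2\bigr)^{1/(2s)}=\bigl(2(1+\tfrac1{2s})\bigr)^{1/(2s)}=2^{1/(2s)}\bigl(1+\tfrac1{2s}\bigr)^{1/(2s)}$; this yields exactly the right-hand side of (i). For (ii) I would first record the polarity identity $\bigl(K_s(f)\bigr)^\circ=K_s\bigl(f_{(s)}^\circ\bigr)$, which follows from the conventions (\ref{def.Ksf}) and (\ref{Leg}) by a short direct check: $(\xi,\eta)$ lies in the polar iff $\sqrt s\,\langle u,\xi\rangle+f^s(u)\,|\eta|\le 1$ for all $u\in S_f$, i.e. $|\eta|\le\inf_{u\in S_f}\frac{\bigl(1-\sqrt s\,\langle u,\xi\rangle\bigr)_+}{f^s(u)}=\bigl(f_{(s)}^\circ(\xi/\sqrt s)\bigr)^s$, which is precisely membership in $K_s(f_{(s)}^\circ)$. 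Then (ii) is (i) applied to $f_{(s)}^\circ\in C_s(\mathbb{R}^n)$ in place of $f$.

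I do not expect a genuine obstacle; the proof is bookkeeping. The two places where a slip is easiest, and which I would verify carefully against the paper's normalizations, are the exponent arithmetic for $f^s$ (raising to the powers $\tfrac1s$ and $\tfrac1s+2$, and identifying $\bigl(\tfrac1s+2\bigr)^{1/(2s)}$ with the constant $2^{1/(2s)}\bigl(1+\tfrac1{2s}\bigr)^{1/(2s)}$ appearing in the statement), and the polarity identity $\bigl(K_s(f)\bigr)^\circ=K_s(f_{(s)}^\circ)$, where the factor $\sqrt s$ in (\ref{def.Ksf}) and the factor $\tfrac1s$ in $S_{f_{(s)}^\circ}=\tfrac1s(S_f)^\circ$ must be tracked exactly.
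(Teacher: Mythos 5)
Your proposal is correct and follows essentially the same route as the paper: both reduce to the identity $L_{K}^{\,n+\frac1s}=\bigl(\det \mathrm{Cov}(K)\bigr)^{1/2}/\mathrm{vol}_{n+\frac1s}(K)$ (the paper gets it via $L_{K_s(f)}=L_{\mathbbm{1}_{K_s(f)}}$ and (\ref{Lphi}), you via AM--GM in (\ref{LK})), then compute the block-diagonal covariance of $K_s(f)$ by fibre integration and assemble with (\ref{vol-Ks}). The only cosmetic difference is that for (ii) the paper simply cites Lemma 3.1 of \cite{ArtKlarMil} for $K_s(f)^\circ=K_s(f_{(s)}^\circ)$, which you verify directly.
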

\vskip 3mm
\noindent
In part of the proof below it  is most convenient to work with the convex bodies  in isotropic position. A convex body $K\subseteq \R^n$ is said to be in isotropic position if $|K| =1 $, $K$ is centered, i.e., has its barycenter at $0$  and 
there is $a > 0$ such that 
\begin{align*}
\int_K x_i x_j  dx = a^2 \delta_{ij}, i, j=1, \dots, n. 
\end{align*}
It is known that for every convex body $K\subseteq \R^n$, there exists $T:\R^n \to \R^n$ affine and invertible such that $TK$ is isotropic. We refer again to  e.g.  \cite{ArtsteinGiannopoulosMilmanBook, BGVV14} for the details  and other facts on isotropic position used here.
\vskip 2mm
\noindent

\subsubsection{Proof of Theorem \ref{prelprop}.}

\begin{proof} 
(i) The estimate from above and the equality cases are Theorem \ref{basic}, (iii).
\par
\noindent
The first step to establish the lower bound uses a construction  of \cite{GiladiHuangSchuettWerner}. 
We pass from functions to convex bodies via $K_s(f) \subset  \mathbb{R}^{n+\frac{1}{s}}$.   But note that the theorem does not follow from \cite{GiladiHuangSchuettWerner}, as our definitions for maximal affine surface areas are different from \cite{GiladiHuangSchuettWerner}.
 As the expressions  that we estimate  are invariant under linear transformations, we can assume that  $f$ is in isotropic position, which implies that 
 $K_s(f)$ is  in isotropic position.
In particular,  this means that we can assume that $1=|K_s(f)|=s^\frac{n}{2} |B^\frac{1}{s}| \int_{\mathbb{R}^n} f\, dx$. 
We put $n_s=n+\frac{1}{s}$. 
Let $L_{K_s(f)}$ be the isotropic constant of $K_s(f)$. By the thin shell estimate of O. Gu\'edon and E. Milman \cite{GuedonMilman} (see also \cite{FleuryGuedonPaouris, LeeVempala17, Paouris}), we have with  universal constants $c$ and $C$, that for all $t \geq 0$, 
\begin{eqnarray*} 
\left| K_s(f) \cap \left\{ x\in \mathbb{R}^{n_s} :\, \left| |x|- L_{K_s(f)} \sqrt{n_s} \right| <  t \, L_{K_s(f)} \sqrt{n_s} \right\} \right| \\
>1 -  C \text{exp}(-c(n_s)^{1/2}\text{min}(t^3,\, t)).
\end{eqnarray*}
Taking $t= O((n_s)^{-1/6})$, there is a  a new universal constant $c>0$ such that for all $n_s \in \mathbb{N}$, 
\begin{equation}
\left| K_s(f) \cap \left\{ x\in \mathbb{R}^{n_s}: \, \left| |x|-\sqrt{n_s} L_{K_s(f)}\right| < c \, L_{K_s(f)}\,  n_s^\frac{1}{3} \right\}\right| \geq \frac{1}{2}.  \label{thin shell}
\end{equation}
This  set consists of all  $x \in K_s(f)$  for which
 $$
 L_{K_s(f)} \left( (n_s)^{1/2} - c \, n_s^{1/3}\right) <  |x|  <  L_{K_s(f)} \left( n_s^{1/2} +  c \, n_s^\frac{1}{3} \right).
 $$ 
We consider those $n_s \in \mathbb{N}$ for which 
$n_s^{1/6} >c$.
\newline
We will truncate the above set. For $i=0,\,1,\,2,\,\dots k_{n_s} =  \lfloor{n_s \log_2 \frac {n_s^\frac{1}{2} + c \, n_s^\frac{1}{3} }{(n_s)^{1/2}- c \, n_s^{1/3}}} \rfloor$, consider the sets 
$$
    L_i := K_s(f)\cap \{x\in \mathbb{R}^{n_s} \, :\, 2^{i/n_s} (L_{K_s(f)} ((n_s)^{1/2}-c n_s^{1/3}) )< |x| \le 2^{(i+1)/n_s}(L_{K_s(f)} (n_s^{1/2}-cn_s^{1/3}))\}.
$$
Then
$$
2^{\frac{k_{n_s}}{n_s }} \leq 2^{\log_2 \frac {n_s^{1/2}+ c n_s^{1/3}}{n_s^{1/2}- c \, n_s^{1/3}}} = \frac {n_s^{1/2}+ c\,  n_s^{1/3}}{n_s^{1/2}- c\,  n_s^{1/3}}
$$
and thus
\begin{align}
    K_s(f) \cap \left\{ x\in \mathbb{R}^{n_s}\,  :\left| |x|-L_{K_s(f)}\sqrt{n_s} \right| < c \, L_{K_s(f)} n_s^{1/3} \right\} \subset \cup_{i=0}^{k_{n_s}} L_i \label{truncation}.
\end{align}
Moreover, with a new absolute constant $C_0$, 
$$
k_{n_s} \leq n_s \log_2 \frac {n_s^{1/2}+ c n_s^{1/3}}{n_s^{1/2}- c \, n_s^{1/3}} = n_s  \log_2 \frac {1 + c \, n_s^{-1/6}}{ 1 - c \, n_s^{-1/6}} \leq  \ C_0\,  n_s^{5/6}.
$$
By (\ref{thin shell}) and (\ref{truncation}), there exists $i_0 \in \{1,\,2,\,\dots,\, \lfloor {C_0 n_s^{5/6}}\rfloor \}$ such that 
\begin{equation}
|L_{i_0} |\ge \frac{1}{2 \   \lfloor {C_0\,  n_s^{5/6}}\rfloor}. \label{eq:Livolume}
\end{equation}
We set 
\begin{equation}\label{GrossR}
R=2^{i_0/n_s}(L_{K_s(f)}( n_s^{1/2}-c n_s^{1/3})). 
\end{equation}
In particular, we have 
$$
L_{i_0}= K_s(f)\cap \{x\in \mathbb{R}^{n_s} \, :\, R < |x| \le 2^{1/n_s}R\}.
$$ 
Let 
\[
O=\left\{ \theta\in S^{n_s-1}\,:\, \rho_{ K_s(f)}\left(\theta\right)>R \right\} ,\,\text{and}\,\,S_{O}=\left\{ r\, \theta\,:\,\theta\in O\,{\rm and}\,r\in\left[0,\,R\right]\right\} \subset K_s(f), 
\]
where $\rho_{K_s(f)}\left(\theta\right)=\max\left\{ r\ge0\,:\,r\theta\in K_s(f)\right\} $
is the radial function of $K_s(f)$.
\par
\noindent
Now we claim that 
\begin{equation}
L_{i_0} \subset 2^{1/n_s}S_O.
\label{eq:inclusion}
\end{equation}
Indeed, let  $y\in L_{i_0}$. We express $y=r\, \theta$ in polar coordinates. By definition, we have $R<r<2^{1/n_s}R$ and $r\, \theta \in K_s(f)$. Thus, $\rho_{K_s(f)}(\theta)\ge r>R$ and hence $\theta\in O$. Therefore, $r\theta\in 2^{1/n_s}S_O$ because $r\in [0,\,2^{1/n_s}R]$.
By (\ref{eq:Livolume}) and (\ref{eq:inclusion}) we conclude that
\begin{equation}
\left|S_{O}\right|\ge\left(2^{-1/n_s}\right)^{n_s}|L_{i_0}| \ge \frac{1}{4 \   \lfloor {C_0 n_s^{5/6}}\rfloor}.\label{eq:volume}
\end{equation}
Let $h$ be this  function such that $K_s(h) = K_s(f) \cap R\, B_{2}^{n_s}$. Then $h \leq f$,  $h$ is in $C_s(\mathbb{R}^n)$,  but not necessarily in $C_s^+(\mathbb{R}^n)$, apart from 
the part of the function  $h$ that corresponds to $R\, B_{2}^{n_s}$ which is in $C_s^+(\mathbb{R}^n)$.  We   construct a new $h_0$ such that $h_0 \in C_s^+(\mathbb{R}^n)$ such that the part corresponding to  $R\, B_{2}^{n_s}$ coincides with $h$.  This can be done 
by convolving $h$ with a mollifier, see e.g., \cite{Evans}, Theorem 7. 
Then $h_0 \in C_s^+(\mathbb{R}^n)$ and still $h_0 \leq f$. 
With (\ref{motiv1}),  and $p= n_s\, \frac{\lam}{1-\lam}$, 
\begin{eqnarray}\label{below1}
IS_\lam^{(s)} (f) & \geq& as_{\lam} ^{(s)}(h_0) = \frac{as_{p} \left(K_s(h_0) \right)}{ (n +\frac{1}{s})\,  s^\frac{n}{2} \, \text{vol}_{\frac{1}{s}} \left(B_2^{\frac{1}{s}}\right) }.
\end{eqnarray}
We now estimate $as_{p}\left(K_s(h_0)\right)$.
For $\theta\in O$, $R\,  \theta$ is a boundary
point of $K_s(h_0)$. Thus,
\begin{eqnarray*}
as_{p}\left(K_s(h_0)\right) & \ge\int_{R\, O}\frac{\kappa^{\frac{p}{n_s+p}}}{\langle x, N\left(x\right)\rangle ^{\frac{n_s\left(p-1\right)}{n_s+p}}}d{\mu\left(x\right)}
  =\int_{R\, O}\frac{R^{-\left(n_s-1\right)\frac{p}{n_s+p}}}{R^{\frac{n_s\left(p-1\right)}{n_s+p}}}d{\mu\left(x\right)}\\
 & =\mu\left(R\, O\right)\left(\frac{1}{R}\right)^{\frac{\left(n_s-1\right)p+n_s\left(p-1\right)}{n_s+p}}
   =\mu\left(R\, O\right)\left(\frac{1}{R}\right)^{\frac{2n_sp}{n_s+p}-1},
\end{eqnarray*}
where $\mu$ is the surface area measure of $R\, S^{n_s-1}$.
We can compare surface area and volume, 
\begin{equation}\label{Surf-Vol}
\frac{\mu\left(R\, O\right)\cdot R}{n_s}=\left|S_{O}\right|.
\end{equation}
Hence by (\ref{eq:volume}), 
\begin{align*}
as_{p}\left(K_s(h_0\right) & \ge\left(\frac{1}{R}\right)^{\frac{2n_s p}{n_s+p}-1}\frac{n_s}{R}\left|S_{O}\right|=\left(\frac{1}{R}\right)^{\frac{2n_sp}{n+p}}n_s \left|S_{O}\right|\\
 & \ge\left(\frac{1}{R}\right)^{\frac{2n_sp}{n_s+p}} \frac{n_s}{4 \   \lfloor {C_0 n_s^{5/6}}\rfloor} \geq  \frac{n_s}{4 \   \lfloor {C_0 n_s^{5/6}}\rfloor} \left( \frac{1}{2\sqrt{n_s} L_{K_s(f)}}\right)^{\frac{2n_sp}{n_s+p}} .
\end{align*}
The last inequality follows since  $R\le 2\sqrt{n_s} L_{K_s(f)}$ by (\ref{GrossR}).
Using (\ref{below1}),  Lemma \ref{lemLphi} and the fact that $\lam=\frac{p}{n_s+p}$, 
\begin{eqnarray*}\label{below2}
IS_\lam^{(s)} (f) & \geq&  \frac{1}{ n_s\,  s^\frac{n}{2} \,  \left|B_2^{\frac{1}{s}}\right| }  \frac{n_s}{4 \   \lfloor {C_0 n_s^{5/6}}\rfloor} \frac{2^{-2 n_s \lam}}{ n_s^{ \lam n_s} L_f^{2n \lam}}
\left(  \frac{
2^\frac{1}{2s}\left(1+\frac{1}{2s}\right)^{\frac{1}{2s}} \|f\|_\infty \left(  \int_{\mathbb{R}^n} f dx \right)^\frac{1}{2s} |B_2^\frac{1}{s}|
}{\left( \int_{\mathbb{R}^n} f^{2s+1} dx \right)^\frac{1}{2s}}
\right)^{2\lam}.  
\end{eqnarray*}
Now we use that by (\ref{norm}), $\|f\|_\infty  \geq c^n$
and we use that 
\begin{eqnarray} \label{Est1}
2^\frac{\lambda}{s} \left( 1+ \frac{1}{2s}\right)^\frac{\lambda}{s}\text{vol}_{\frac{1}{s}}\left(B_2^{\frac{1}{s}}\right)^{2\lambda} \geq 2^\frac{\lambda}{s} \left(\frac{1}{2s}\right)^\frac{\lambda}{s}\text{vol}_{\frac{1}{s}}\left(B_2^{\frac{1}{s}}\right)^{2\lambda} 
\geq \left(\frac{s}{\pi}\right)^\lam\left(2 \pi  e\right)^\frac{\lam}{s}.
\end{eqnarray} 
As for $\alpha >0$, $\frac{IS_\lam^{(s)} (\alpha f) }{\left(\int_{\mathbb{R}^n} \alpha  f \, dx \right) ^{1-2 \lam}} =\frac{IS_\lam^{(s)} (f) }{\left(\int_{\mathbb{R}^n}  f \, dx \right) ^{1-2 \lam}} $
 we can assume that  $0 \leq f(x) \leq 1$. If not, replace $f$ by $\frac{f}{\|f\|_\infty}$. 
Hence $\frac{ \int_{\mathbb{R}^n} f  }{ \int_{\mathbb{R}^n} f^{2s+1} } 
\geq 1$. We get 
\begin{eqnarray*}\label{below2}
IS_\lam^{(s)} (f) & \geq&  \frac{1}{   s^\frac{n}{2} \,  \left|B_2^{\frac{1}{s}}\right| }  \frac{c^n}{4 \   \lfloor {C_0 n_s^{5/6}}\rfloor} \frac{2^{-2 n_s \lam}}{n_s^{\lam n_s} L_f^{2n \lam}}
\left(\frac{s}{\pi}\right)^\lam\left(2 \pi  e\right)^\frac{\lam}{s}.
\end{eqnarray*}
As $K_s(f)$ is in isotropic position,   $1= s^\frac{n}{2} |B^\frac{1}{s}| \int_{\mathbb{R}^n} f\, dx$.  Thus 
\begin{eqnarray*}\label{below2}
\frac{IS_\lam^{(s)} (f)}{\left(\int_{\mathbb{R}^n} f dx \right)^{1-2 \lam}} & \geq& \frac{1}{ \left(  s^\frac{n}{2} \,  \left|B_2^{\frac{1}{s}}\right| \right)^{2 \lam}}  
  \frac{2^{-2 n_s \lam}}{4 \   \lfloor {C_0 n_s^{5/6}}\rfloor} \frac{c^n}{ n_s^{\lam n_s} L_f^{2n  \lam} }
\left(\frac{s}{\pi}\right)^\lam\left(2 \pi  e\right)^\frac{\lam}{s}.
\end{eqnarray*}
We compare this to the corresponding expression for $g^{(s)}_e$. By Lemma \ref{ge-prop}, 
\begin{equation*}\label{iso-ball}
\frac{IS_\lam^{(s)} (g^{(s)}_e) }{\left(\int_{\mathbb{R}^n} g_e dx \right)^{1-2 \lam}}=\left(\int_{\mathbb{R}^n} g^{(s)}_e dx\right)^{2 \lam} = \left( \frac{ \left|B_2^{n_
s}\right|}{s^\frac{n}{2} \,  \left|B_2^{\frac{1}{s}}\right|}\right)^{2 \lam}.
\end{equation*}
Therefore, 
\begin{eqnarray*}\label{below2}
\frac{IS_\lam^{(s)} (f)}{\left(\int_{\mathbb{R}^n} f dx \right)^{1-2 \lam}} & \geq& \frac{IS_\lam^{(s)} (g^{(s)}_e) }{\left(\int_{\mathbb{R}^n} g^{(s)}_e dx \right)^{1-2 \lam}}
\left(\frac{1}{  \left|B_2^{n_s}\right| } \right)^{2 \lam}
  \frac{2^{-2 n_s \lam}}{4 \   \lfloor {C_0 n_s^{5/6}}\rfloor} \frac{1}{ n_s^{\lam n_s} L_f^{2 n \lam} }
\left(\frac{s}{\pi}\right)^\lam\left(2 \pi  e\right)^\frac{\lam}{s}.
\end{eqnarray*}
Using Sterling's formula, $|B_2^{n_s}| = \frac{\pi^\frac{n_s}{2}}{\Gamma(\frac{n_s}{2} +1)} \leq \left(\frac{2 \pi e}{n_s}\right)^\frac{n_s+1}{2}$, 
leads to 
\begin{eqnarray*}\label{below2}
\frac{IS_\lam^{(s)} (f)}{\left(\int_{\mathbb{R}^n} f dx \right)^{1-2 \lam}} & \geq& 
\frac{s^\lam}{ 2^{2(\lam n_s+1)}\left(2\pi e \right)^{\lam(n+2)}}
\frac{IS_\lam^{(s)} (g^{(s)}_e) }{\left(\int_{\mathbb{R}^n} g^{(s)}_e dx \right)^{1-2 \lam}}
  \frac{1}{ \lfloor {C_0 n_s^{5/6-\lam}}\rfloor} \frac{1}{ L_f^{2 n \lam} }.
\end{eqnarray*}
\vskip 3mm
\noindent
For the other lower bound we use the lower bound (\ref{2-OS}) of (ii) derived below. 
\par
\noindent
With   (\ref{2-OS}) and  the duality relation of
Theorem \ref{basic}, (iv). Thus, for $0 \leq \lam \leq \frac{1}{2}$ and with $\mu=1-\lam$,  we get, also using that 
$\int_{\mathbb{R}^n} f ^\circ_{(s)} dx \leq \frac{\left(\int_{\mathbb{R}^n} g^{(s)}_e dx \right)^2}{\int_{\mathbb{R}^n} f dx }$, 
\begin{eqnarray*}\label{below2}
\frac{IS_\lam^{(s)} (f)}{\left(\int_{\mathbb{R}^n} f dx \right)^{1-2 \lam}} 
&=&
 \frac{OS_\mu^{(s)} (f ^\circ_{(s)})\left(\int_{\mathbb{R}^n} f ^\circ_{(s)} dx \right)^{1-2 \mu}}{\left(\int_{\mathbb{R}^n} f dx \right)^{1-2 \lam}
\left(\int_{\mathbb{R}^n} f ^\circ_{(s)} dx \right)^{1-2 \mu}} \\
&\geq&  \frac{n_s^{n_s(1-2 \mu)}}{\left(\int_{\mathbb{R}^n} f dx \right)^{1-2 \lam}} \, 
\frac{\left(\int_{\mathbb{R}^n} g^{(s)}_e dx \right)^{2(1-2 \mu)}}{\left(\int_{\mathbb{R}^n} f dx \right)^{1-2 \mu}} \, 
\frac{OS_\mu^{(s)} (g^{(s)}_e)}{\left(\int_{\mathbb{R}^n} g^{(s)}_e dx \right)^{1-2 \mu}} \\
&= &\frac{1}{n_s^{n_s(1-2 \lam)}} \, \frac{OS_\mu^{(s)} (g^{(s)}_e)}{\left(\int_{\mathbb{R}^n} g^{(s)}_e dx \right)^{1-2 \lam}}
= \frac{1}{n_s^{n_s(1-2 \lam)}} \, \frac{IS_\lam^{(s)} ((g^{(s)}_e)^\circ)}{\left(\int_{\mathbb{R}^n} g^{(s)}_e dx \right)^{1-2 \lam}}.
\end{eqnarray*}
The estimate follows as $(g^{(s)}_e)^\circ = g^{(s)}_e$.
\vskip 3mm
\noindent
(ii) Now we give the proof of (ii). The estimate from above and the equality cases are Theorem \ref{basic}, (iii).
\par
\noindent
We derive a first  bound using L\"owner position. We will assume that $K_s(f)$ is in L\"owner position, i.e.,  the L\"owner ellipsoid $L(K_s(f))$, which is the ellipsoid of minimal volume containing $K_s(f)$,
is the Euclidean ball $\left(\frac{|L(K_s(f))|}{|B^n_2|}\right)^\frac{1}{n_s} B^{n_s}_2$.  We also have that 
\begin{equation}\label{loewner}
K_s(f) \subset L(K_s(f)) \subset n_s \ K_s(f),
\end{equation}
and that for a $0$-symmetric convex body $K_s(f)$, which happens if $f$ is even,
\begin{equation}\label{loewner2}
K_s(f) \subset L(K_s(f)) \subset \sqrt{n_s}  \ K_s(f).
\end{equation}
We choose $h$ such that $K_s(h)= L(K_s(f))$. Then $h \geq f$ and $h \in C_s^+(\mathbb{R}^n)$.
Thus,  using (\ref{loewner}) and  $\frac{n_s-p}{n_s+p} = 1-2 \lam$, 
\begin{eqnarray*}
OS_\lam^{(s)} (f)  & \geq& as_{\lam} ^{(s)}(h) = \frac{as_{p} \left(L(K_s(f) )\right)}{ n_s  s^\frac{n}{2} \,  \left|B_2^{\frac{1}{s}}\right| } 
=
\left(\frac {\left|L(K_s(f))\right|}{\left|B^{n_s}_2\right|} \right) ^\frac{n_s-p}{n_s+p}  \frac{ |B^{n_s}_2|}{ s^\frac{n}{2} \, \left|B_2^{\frac{1}{s}}\right|} \\
&\geq& n_s^{n_s (1-2 \lam)} \left(\frac {\left|K_s(f)\right|}{\left|B^{n_s}_2\right|} \right) ^{1-2 \lam} \frac{ |B^{n_s}_2|}{ s^\frac{n}{2} \,  \left|B_2^{\frac{1}{s}}\right|}\\
&=& n_s^{n_s (1-2 \lam)} \left(\frac {\left|K_s(f)\right|}{s^\frac{n}{2} \,  \left|B_2^{\frac{1}{s}}\right|} \right) ^{1-2 \lam} \frac{ |B^{n_s}_2|^{2 \lam}}{ \left(s^\frac{n}{2} \, \left|B_2^{\frac{1}{s}}\right|\right)^{2 \lam}}.
\end{eqnarray*}
Now we use 
\begin{equation}\label{Kugel}
\frac{ |B^{n_s}_2|^{2 \lam}}{ \left(s^\frac{n}{2} \,\left|B_2^{\frac{1}{s}}\right|\right)^{2 \lam}} = \frac{OS_\lam^{(s)} (g^{(s)}_e) }{\left(\int_{\mathbb{R}^n}  g^{(s)}_e\, dx \right) ^{1-2 \lam}}
\end{equation}
and
\begin{equation}\label{f-volumen}
\left(\frac {\left|K_s(f)\right|}{s^\frac{n}{2} \,  \left|B_2^{\frac{1}{s}}\right|} \right) ^{1-2 \lam} = \left(\int_{\mathbb{R}^n} f\, dx \right) ^{1-2 \lam}.
\end{equation}
Thus we get
\begin{eqnarray}\label{2-OS}
\frac{OS_\lam^{(s)} (f) }{\left(\int_{\mathbb{R}^n}  f \, dx \right) ^{1-2 \lam}} \geq  n_s^{n_s (1-2 \lam)} \frac{OS_\lam^{(s)} (g^{(s)}_e) }{\left(\int_{\mathbb{R}^n}  g^{(s)}_e\, dx \right) ^{1-2 \lam}}.
\end{eqnarray}
\vskip 2mm
\noindent
We get the other  estimate for the lower bound using the estimate for $IS$ and the duality relation of Theorem \ref{basic}, (iv). We put again $\mu=1-\lam$ and get with
part (i) of the theorem that 
\begin{eqnarray*}
\frac{OS_\lam^{(s)} (f)}{\left(\int_{\mathbb{R}^n} f dx \right)^{1-2 \lam}}   &= &
\frac{IS_\mu^{(s)} (f ^\circ_{(s)})}{
\left(\int_{\mathbb{R}^n} f ^\circ_{(s)} dx \right)^{1-2 \mu}}   
\, \frac{\left(\int_{\mathbb{R}^n} f ^\circ_{(s)} dx \right)^{1-2 \mu}}
{ \left(\int_{\mathbb{R}^n} f dx \right)^{1-2 \lam}} \\
&\geq&\frac{IS_\mu^{(s)} (g^{(s)}_e)}{\left(\int_{\mathbb{R}^n} g^{(s)}_e dx \right)^{1-2 \mu}} 
  \frac{s^\mu \, C^{n_s \mu}}{ \lfloor {C_0 n_s^{5/6-\mu}}\rfloor} \frac{1}{ L_{f_{(s)}^\circ}^{2n \mu} } \, \frac{\left(\int_{\mathbb{R}^n} f ^\circ_{(s)} dx \right)^{1-2 \mu}}
{ \left(\int_{\mathbb{R}^n} f dx \right)^{1-2 \lam}} \\
&=& \frac{OS_\mu^{(s)} (g^{(s)}_e)}{\left(\int_{\mathbb{R}^n} g^{(s)}_e dx \right)^{1-2 \lam}} 
  \frac{s^\mu \, C^{n_s \mu}}{ \lfloor {C_0 n_s^{5/6-\mu}}\rfloor} \frac{1}{ L_{f_{(s)}^\circ}^{2n \mu} }\, \left(\int g^{(s)}_e dx\right)^{2(\mu-\lam)}  \frac{\left(\int_{\mathbb{R}^n} f ^\circ_{(s)} dx \right)^{1-2 \mu}}
{ \left(\int_{\mathbb{R}^n} f dx \right)^{1-2 \lam}}.
\end{eqnarray*}
We also have used that  $(g^{(s)}_e)^\circ = g^{(s)}_e$.
As  $K_s(f_{(s)}^\circ)= (K_s(f))^\circ$ by Lemma  3.1 in \cite{ArtKlarMil}, we get 
\begin{eqnarray*}
\int_{\mathbb{R}^n} f ^\circ_{(s)} dx &=&\frac{\left|  K_s(f_{(s)}^\circ) \right|}{ n_s  s^\frac{n}{2} \, \left|B_2^{\frac{1}{s}}\right| } =\frac{\left| \left(K_s(f) \right)^\circ\right|}{ n_s  s^\frac{n}{2} \, \left|B_2^{\frac{1}{s}}\right| } \geq 
\frac{c^{n_s} \left|B_2^{n_{s}}\right|^2}{ n_s\,  s^\frac{n}{2} \, \left|B_2^{\frac{1}{s}}\right| \left|K_s(f)\right|} 
= \frac{c^{n_s} \left| B_2^{n_{s}}\right|^2}{ n_s^2\,  s^n \, \left|B_2^{\frac{1}{s}}\right|^2 \int _{\mathbb{R}^n} f dx}.
\end{eqnarray*}
In the above, we have used the reverse Blaschke Santal\'o inequality $|L| |L^\circ| \geq c^n    |B_2^n|^2$ from Theorem 1 of \cite{BM87}, and (\ref{lam=0}).
As $ \int_{\mathbb{R}^n} g^{(s)}_e dx=  \frac{\left|B^{n_s}_2\right|}{s^\frac{n}{2} \left|B^{ \frac{1}{s}}_2\right|}$ and as $\lam+\mu=1$, we get
\begin{eqnarray*}
\frac{OS_\lam^{(s)} (f)}{\left(\int_{\mathbb{R}^n} f dx \right)^{1-2 \lam}}  &\geq& \frac{OS_\mu^{(s)} (g^{(s)}_e)}{\left(\int_{\mathbb{R}^n} g^{(s)}_e dx \right)^{1-2 \lam}} 
  \frac{s^\mu \, C^{n_s \mu}}{ \lfloor {C_0 n_s^{5/6-\mu}}\rfloor} \frac{1}{ L_{f_{(s)}^\circ}^{2n \mu} }\left(\frac{ \left| B_2^{n_{s}}\right|^2}{ n_s^2\,  s^n \, \left|B_2^{\frac{1}{s}}\right|^2}\right)^{1-2\mu} \left( \frac{\left|B^{n_s}_2\right|}{s^\frac{n}{2} \left|B^{ \frac{1}{s}}_2\right|}\right)^{2(\mu-\lam)}\\
 &=& \frac{OS_\mu^{(s)} (g^{(s)}_e)}{\left(\int_{\mathbb{R}^n} g^{(s)}_e dx \right)^{1-2 \lam}} 
  \frac{s^{1-\lam} \, C^{n_s(1- \lam)}}{ \lfloor {C_0 n_s^{5 \lam-13/6}}\rfloor} \frac{1}{ L_{f_{(s)}^\circ}^{2n(1-\lam)} }.
\end{eqnarray*}
\vskip 2mm
\noindent
In the even case we use (\ref{loewner2}) to get the estimates with $n_s^{n_s  \frac{1-2 \lam}{2} }$ instead of $n_s^{n_s (1 -2 \lam)}$ in (i) and (ii).
\end{proof}
\vskip 4mm
\noindent
\subsubsection{Proof of Proposition \ref{prelprop1}.}

\begin{proof}
The  lower bounds of (i) and (ii),  together with the equality cases, are Theorem \ref{basic}, (iii).
\vskip 2mm
\noindent
Let $\lam \in (-\infty, 0]$. As in the proof of Theorem \ref{prelprop} (i) we choose again $h$ such that $K_s(h)=L(K_s(f))$. Similar as in that proof,  
we get with  (\ref{loewner}), (\ref{f-volumen})  and $\frac{n_s-p}{n_s+p} = 1-2 \lam$,
\begin{eqnarray*}
os_\lam^{(s)} (f)  &\leq& as_{\lam} ^{(s)}(h) = \frac{as_{p} \left(L(K_s(f) )\right)}{ n_s  s^\frac{n}{2} \, \left|B_2^{\frac{1}{s}}\right| } 
= \left(\frac {\left|L(K_s(f))\right|}{\left|B^{n_s}_2\right|} \right) ^\frac{n_s-p}{n_s+p}  \frac{ |B^{n_s}_2|}{ s^\frac{n}{2} \,  \left|B_2^{\frac{1}{s}}\right|} \\
&\leq&  n_s^{n_s (1-2 \lam)} \left(\frac {\left|K_s(f)\right|}{s^\frac{n}{2} \,  \left|B_2^{\frac{1}{s}}\right|} \right) ^{1-2 \lam} \frac{ |B^{n_s}_2|^{2 \lam}}{ \left(s^\frac{n}{2} \, \left|B_2^{\frac{1}{s}}\right|\right)^{2 \lam}} \\
&=&  n_s^{n_s (1-2 \lam)} \left(\int_{\mathbb{R}^n} f\, dx \right) ^{1-2 \lam} \frac{os_\lam^{(s)} (g^{(s)}_e) }{\left(\int_{\mathbb{R}^n}  g^{(s)}_e\, dx \right) ^{1-2 \lam}}.
\end{eqnarray*}
\vskip 2mm
\noindent
(ii) The proof of (ii) follows along the same lines as proof of the  estimate  (\ref{below2}) of Theorem \ref{prelprop} (ii).
\vskip 2mm
\noindent
In the even case we again use (\ref{loewner2}) to get the estimates with $n_s^{n_s  \frac{1-2 \lam}{2} }$ instead of $n_s^{n_s (1 -2 \lam)}$.
\end{proof}
\vskip 4mm
\noindent
\subsection{Proof of Theorem \ref{estimate} and Proposition \ref{estimate-prop}.}

We need more notations. We put
\begin{equation}\label{hyper}
G(f)=\{ (x,y) \in \mathbb{R}^n \times \mathbb{R}: x \in \overline{S_f}, -f^s(x) \leq y \leq f^s(x)\}.
\end{equation}
Then $G(f)$ is a convex body in $\mathbb{R}^{n+1}$ and 
\begin{equation}\label{Gf=iso}
|G(f)| = 2 \int_{\mathbb{R}^n} f^s dx.
\end{equation}

\vskip 4mm
\subsubsection{Proof of Theorem \ref{estimate}.}
\par
\noindent
The upper estimates  and the equality cases of (i) and (ii)  follow again from Theorem \ref{basic}, (iii).
\vskip 2mm
\noindent
{\bf The lower estimates.}
\par
\noindent
(i) Let  $s \geq 1$.
We will assume that $G(f)$ is in L\"owner position  i.e.,  the L\"owner ellipsoid $L(G(f))$
is the Euclidean ball $\left(\frac{|L(G(f))|}{|B^{n+1}_2|}\right)^\frac{1}{(n+1)} B^{n+1}_2 = R \, B^{n+1}_2$. Moreover, 
\begin{equation}\label{loewner3}
G(f) \subset L(G(f)) \subset (n+1) \ G(f),
\end{equation}
and for a $0$-symmetric convex body $G(f)$, which happens if $f$ is even,
\begin{equation}\label{loewner4}
G(f) \subset L(G(f)) \subset \sqrt{n+1}  \ G(f).
\end{equation} 
We choose $h=h_R$ such that $G(h)= L(G(f)) = R\, B^{n+1}_2$. Then $h \geq f$, $h \in C_s^+(\mathbb{R}^n)$ and 
$$
h(x) =
h_R= \left(R^2 - |x|^2\right)^\frac{1}{2 s}= s^{-\frac{1}{2s}}\left(\left(s^\frac{1}{2} R \right)^2- s |x|^2\right)^\frac{1}{2s}= \frac{g^{(s)}_{e, s^\frac{1}{2} R}}{s^{\frac{1}{2s}}}.
$$
As for $\alpha >0$, $\frac{OS_\lam^{(s)} (\alpha f) }{\left(\int_{\mathbb{R}^n} \alpha  f \, dx \right) ^{1-2 \lam}} =\frac{OS_\lam^{(s)} (f) }{\left(\int_{\mathbb{R}^n}  f \, dx \right) ^{1-2 \lam}} $
 we can assume that  $0 \leq f(x) \leq 1$. If not, replace $f$ by $\frac{f}{\|f\|_\infty}$. Then for $s \geq 1$, $\int_{\mathbb{R}^n}  f  \geq \int_{\mathbb{R}^n}  f^s $ and 
thus,  with   (\ref{loewner3}), Lemma \ref{lemma-phie},  the definition of $R$,  and as $\lam \in [1/2, 1]$, 
\begin{eqnarray}\label{OS-s>1}
\frac{OS_\lam^{(s)} (f) }{\left(\int_{\mathbb{R}^n}  f \, dx \right) ^{1-2 \lam}}  & \geq& \frac{OS_\lam^{(s)} (f) }{\left(\int_{\mathbb{R}^n}  f^s \, dx \right) ^{1-2 \lam}} 
\geq  \frac{as_{\lam} ^{(s)}(h_R)}{2^{2 \lam-1} |G(f)|^{1-2 \lam}} \nonumber \\
&=& \frac{R^{(n+\frac{1}{s}) (1- 2 \lam)} }{2^{2 \lam-1} |G(f)|^{1-2 \lam}}
\frac{ \left(\int_{\mathbb{R}^n} g^{(s)}_e\right) ^{1-2\lam}  }{s^{\frac{n}{2}( 2 \lam -1)} }   \, \frac{OS_\lam^{(s)} (g^{(s)}_e) }{\left(\int_{\mathbb{R}^n}g^{(s)}_e\right) ^{1-2\lam} } \nonumber \\
&\geq& \frac{OS_\lam^{(s)} (g^{(s)}_e) }{\left(\int_{\mathbb{R}^n} g^{(s)}_e\right) ^{1-2\lam} } \frac{c^{n(2 \lam -1)}}{(n+1)^{(2\lam -1)(n+1)}} 
 \left(\frac{\Gamma\left(\frac{n+1/s}{2}+1\right)}{\Gamma\left(\frac{n+1}{2}+1\right)\Gamma\left(\frac{1}{2s}+1\right)} \right)^{2\lam-1} \nonumber\\
&\geq&\frac{c^ {n(2 \lam-1)} }{(n+1)^{(2\lam -1)(3n/2+1)}}
\frac{OS_\lam^{(s)} (g^{(s)}_e) }{\left(\int _{\mathbb{R}^n}g^{(s)}_e\right) ^{1-2\lam} }.
\end{eqnarray}
In the last inequality we have used that $\frac{\Gamma\left(\frac{n+1/s}{2}+1\right)}{\Gamma\left(\frac{n+1}{2}+1\right)} \geq c/ n^\frac{1}{2}$
and that $\Gamma\left(\frac{1}{2s}+1\right) ^{-1}\geq 1$. 
\vskip 2mm
\noindent
(ii) The estimate for $\frac{IS_\lam^{(s)} (f) }{\left(\int_{\mathbb{R}^n}  f \, dx \right) ^{1-2 \lam}}$ follows as usual from the one in (i) by duality.
\vskip 2mm
\noindent
In the even case (\ref{loewner4}) gives estimates with $(n+1)^{(n+1) \frac{1-2 \lam}{2}}$ instead of $(n+1)^{(n+1) (1 -2 \lam)}$.
\vskip 3mm
\noindent
\subsubsection{Proof of Proposition \ref{estimate-prop}.}

\begin{proof} 
Let $s \geq 1$.
The  lower bounds of (i) and (ii),  together with the equality cases, are Theorem \ref{basic}, (iii).
\vskip 2mm
\noindent
(i) Let $\lam \in (- \infty, 0]$. We apply the same steps  as for estimate  (\ref{OS-s>1}). We put again $G(f)$ in L\"owner position,   i.e.,  the L\"owner ellipsoid $L(G(f))$
is the Euclidean ball $\left(\frac{|L(G(f))|}{|B^{n+1}_2|}\right)^\frac{1}{(n+1)} B^{n+1}_2 = R \, B^{n+1}_2$.
In the same way as for  (\ref{OS-s>1}), we get with an absolute constant $c>0$, 
\begin{eqnarray*}
\frac{os_\lam^{(s)} (f) }{\left(\int_{\mathbb{R}^n}  f \, dx \right) ^{1-2 \lam}}  & \leq&
c^{n(1-2 \lam)}\, (n+1)^{(3n/2+1)(1-2 \lam)}
\frac{os_\lam^{(s)} (g^{(s)}_e) }{\left(\int_{\mathbb{R}^n} g^{(s)}_e\right) ^{1-2\lam} }.
\end{eqnarray*}
\vskip 2mm
\noindent
(ii) follows from (i) by duality. 
\vskip 2mm
\noindent
In the even case (\ref{loewner4}) gives estimates with $(n+1)^{(3n/2+1) \frac{1-2 \lam}{2}}$ instead of $(n+1)^{(3n/2+1)(1-2\lambda)}$.

\end{proof}

\vskip 4mm
\noindent

\subsection{Appendix: Proof of Lemma \ref{lemLphi}.}
\vskip 2mm
\noindent
\begin{proof}
The statement (ii) follows immediately from Lemma  3.1 in \cite{ArtKlarMil} which says that $K_s(f)^\circ=K_s(f ^\circ_{(s)})$. Therefore it is enough to prove statement (i).
\par
\noindent
\begin{eqnarray*}
L_{K_s(f)}&=&L_{1_{K_{s(f)}}}=
\left(\frac{\sup_{z \in \mathbb{R}^{n+\frac{1}{s}}} 1_{K_{s(f)}}(z)}{\int_{\mathbb{R}^{n+\frac{1}{s}}} 1_{K_{s(f)}} (z)dz} \right)^\frac{1}{n+\frac{1}{s}}\, \left( \det \, Cov(1_{K_{s(f)}})\right)^\frac{1}{2\left(n+\frac{1}{s}\right)}\\
&=&\frac{1}{\left(\text{vol}_{n+\frac{1}{s}}(K_s(f))\right)^\frac{1}{n+\frac{1}{s}}}\, \left( \det \, Cov(1_{K_{s(f)}})\right)^\frac{1}{2\left(n+\frac{1}{s}\right)}
\end{eqnarray*}
We compute the determinant of the matrix $\left[ Cov(1_{K_{s(f)}})\right]_{ i,j }$. 
We use that the $s$-concave function  $f$ has center of gravity at $0$ and consequently $K_{s(f)}$ has center of gravity at $0$. Thus
\begin{eqnarray*}
\det \, \left(\left[ Cov(1_{K_{s(f)}})\right]_{ i,j }\right)&=& \det \, \left( \left[\frac{\int_{K_{s(f)}} z_i z_j \,  dz}{\int_{K_{s(f)}}  dz} - 
\frac{\int_{K_{s(f)}} z_i \,  dz}{\int_{K_{s(f)}}dz} \, \frac{\int_{K_{s(f)}} z_j\, dz}{\int_{K_{s(f)}} dz} \right]_{i,j}\right)\\
&=&  \det \, \left( \left[ \frac{\int_{K_{s(f)}} z_i z_j \,  dz}{\int_{K_{s(f)}}  dz} \right]_{i,j}\right) = 
\frac{\det \, \left( \left[ \int_{K_{s(f)}} z_i z_j \,  dz \right]_{i,j}\right)}{\left(\mbox{vol}_{n+\frac{1}{s}}\left(K_s(f)\right)\right)^{n+\frac{1}{s}} } .
\end{eqnarray*}
Let $$z=\left(x_1, \cdots, x_n, y_{n+1}, \cdots, y_{n+\frac{1}{s}}\right) \in \mathbb{R}^n \times \mathbb{R}^\frac{1}{s}.$$
We have to  compute the determinant of the following $[n+\frac{1}{s}]\times [n+ \frac{1}{s}]$ matrix
\begin{align*}
&\left[
\begin{array}{c;{2pt/2pt}c}
\begin{matrix}
\int_{K_s(f)} x_1^2 dz & \cdots & \cdots & \int_{K_s(f)} x_1x_n dz\\
\int_{K_s(f)} x_2x_1 dz & \cdots & \cdots & \int_{K_s(f)} x_2x_n dz\\
\vdots & \vdots & \ddots & \vdots\\
\int_{K_s(\varphi)} x_nx_1 dz & \cdots & \cdots & \int_{K_s(f)} x_n^2 dz\\
\end{matrix} & \begin{matrix}
\int_{K_s(f)} x_1y_{n+1} dz & \cdots & \int_{K_s(f)} x_1y_{n+\frac{1}{s}} dz\\
\int_{K_s(f)} x_2y_{n+1} dz & \cdots & \int_{K_s(f)} x_2y_{n+\frac{1}{s}} dz\\
\vdots & \ddots & \vdots\\
\int_{K_s(f)} x_ny_{n+1} dz & \cdots & \int_{K_s(f)} x_ny_{n+\frac{1}{s}} dz\\
\end{matrix}\\\hdashline[2pt/2pt]
\begin{matrix}
\int_{K_s(f)} y_{n+1}x_1 dz & \cdots & \cdots & \int_{K_s(f)} y_{n+1}x_n dz\\
\vdots & \vdots & \ddots & \vdots\\
\int_{K_s(f)} y_{n+\frac{1}{s}}x_1 dz & \cdots & \cdots & \int_{K_s(f)} y_{n+\frac{1}{s}}x_n dz\\
\end{matrix} & \begin{matrix}
\int_{K_s(f)} y_{n+1}^2 dz & \cdots & \int_{K_s(f)} y_{n+1}y_{n+\frac{1}{s}} dz\\
\vdots & \ddots & \vdots\\
\int_{K_s(f)} y_{n+\frac{1}{s}}y_{n+1} dz & \cdots & \int_{K_s(f)}y_{n+\frac{1}{s}}^2 dz\\\
\end{matrix}
\end{array}
\right]
\end{align*}
For all $n+1 \leq i \leq n+\frac{1}{s}$, 
$$
\int_{B^\frac{1}{s} \left( \frac{x}{\sqrt{s}}, f ^s \left( \frac{x}{\sqrt{s}}\right) \right)} y_i dy=0.
$$
Moreover, for all $1 \leq i,j \leq \frac{1}{s}$, $i \neq j$, 
$$\int_{K_s(f)}y_{n+i}y_{n+j}dz = 0.$$
Hence the above matrix reduces to 
$$\left[
\begin{array}{c;{2pt/2pt}c}
\begin{matrix}
\int_{K_s(f)} x_1^2 dz & \int_{K_s(f)} x_1x_2 dz & \cdots & \int_{K_s(f)} x_1x_n dz\\
\int_{K_s(f)} x_2x_1 dz & \int_{K_s(f)} x_2^2 dz & \cdots & \int_{K_s(f)} x_2x_n dz\\
\vdots & \vdots & \ddots & \vdots\\
\int_{K_s(f)} x_nx_1 dz & \int_{K_s(f)} x_nx_2 dz & \cdots & \int_{K_s(f)} x_n^2 dz\\
\end{matrix} & \begin{matrix}
\quad 0\quad & \quad \cdots \quad & \quad 0 \quad\\
\quad 0 \quad & \quad \cdots \quad & \quad 0 \quad\\
\vdots & \ddots & \vdots\\
\quad 0 \quad & \quad \cdots \quad  & \quad 0\quad \\
\end{matrix}\\\hdashline[2pt/2pt]
\begin{matrix}
\quad \quad 0\quad  & \quad \quad \quad \quad 0 \quad  & \quad \quad \text{ }\cdots  \quad & \quad\quad 0 \quad \quad\\
\quad\quad \vdots \quad  & \quad \quad \quad \quad \vdots  \quad & \quad \quad \text{ }\ddots \quad & \quad \quad \vdots \quad \quad\\
\quad \quad 0 \quad  & \quad \quad \quad \quad 0 \quad  & \quad \quad \text{ }\cdots \quad   & \quad \quad 0\quad \quad \\
\end{matrix} & \begin{matrix}
\int_{K_s(f)} y_{n+1}^2 dz &  & \\
 & \ddots & \\
 &  & \int_{K_s(f)}y_{n+\frac{1}{s}}^2 dz\\
\end{matrix}
\end{array}
\right]$$
For all $1 \leq i,j \leq n$, 
\begin{eqnarray*}
 \int_{K_{s(f)}} x_i x_j  dz = \int_{S_f} x_i x_j  \int_{B^\frac{1}{s} \left( \frac{x}{\sqrt{s}}, f^s \left( \frac{x}{\sqrt{s}}\right) \right)} dy dx
= s^\frac{n+2}{2} \, \mbox{vol}_{\frac{1}{s}} \left(B_2^{\frac{1}{s}}\right)  \int_{S_f} x_i x_j  f(x) dx.
\end{eqnarray*}
For all $n+1 \leq i,  j  \leq n+\frac{1}{s}$
\begin{eqnarray*}
 \int_{K_{s(f)}} y_i^2 dz =  \int_{K_{s(f)}} y_j^2 dz
\end{eqnarray*}
Thus the determinant of the above matrix is  composed of an $n \times n$ matrix and a $\frac{1}{s} \times \frac{1}{s}$ diagonal matrix.
The determinant of the $n \times n$ matrix is
\begin{align*}
&\det \left( s^{\frac{n+2}{2}} \left|B_2^\frac{1}{s}\right| \int_{S_f} x_ix_j f(x)dx \right)_{1 \leq i, j \leq n}
 =s^{\frac{n(n+2)}{2}} \left|B_2^\frac{1}{s}\right|^n \left( \int_{\mathbb{R}^n} f dx \right)^n \det (Cov(f))_{i,j}.
\end{align*}
Using polar coordinates and several changes of variable, the determinant of the $\frac{1}{s} \times \frac{1}{s}$ diagonal matrix becomes
\begin{equation*}
\left( \int_{K_s(f)} y_{n+\frac{1}{s}}^2 dz \right)^\frac{1}{s} = \left(\int_{S_f}\int_{B_2^\frac{1}{s}\left(x, f^s\left(\frac{x}{\sqrt{s}}\right)\right)} y_{n+\frac{1}{s}}^2 dy \, dx \right)^\frac{1}{s}\\
=\left(\frac{ \left|B_2^\frac{1}{s}\right|s^\frac{n}{2}}{\left( \frac{1}{s}+2 \right)} \right)^\frac{1}{s}\left( \int_{\mathbb{R}^n} f^{2s+1} dx \right)^\frac{1}{s}.
\end{equation*}
Thus, we have
\begin{align*}
&(\det \, Cov(1_{K_s(f)}))^{\frac{1}{2(n+\frac{1}{s})}}
=\left(\frac{\det\left(\left[ \int_{K_s(f)} z_iz_j dz\right]_{i,j}\right)}{\left(\text{vol}_{n+\frac{1}{s}}(K_s(f))\right)^{n+\frac{1}{s}}}\right)^{\frac{1}{2\left(n+\frac{1}{s}\right)}}\\
&= \frac{\left(\left(\int_{K_s(f)}y_{n+\frac{1}{s}}^2 dz\right)^\frac{1}{s} \det\left(\left[ \int_{K_s(f)}x_ix_j dz \right]_{1\leq i,j \leq n}\right)\right)^{\frac{1}{2\left(n+\frac{1}{s}\right)}}}{\left(\text{vol}_{n+\frac{1}{s}}(K_s(f))\right)^\frac{1}{2}}\\
&= \frac{\left(\text{vol}_\frac{1}{s} \left(B_2^\frac{1}{s}\right)^{n+\frac{1}{s}}s^{\frac{n}{2s}+\frac{n(n+2)}{2}}\left( \int_{\mathbb{R}^n} f^{2s+1} dx \right)^\frac{1}{s} \left( \int_{\mathbb{R}^n} f dx \right)^n \det (Cov(f))_{i,j}\right)^\frac{1}{2\left( n+\frac{1}{s} \right)}}{\left(\frac{1}{s}+2\right)^{\frac{1}{2s\left(n+\frac{1}{s}\right)}}\left(\text{vol}_{n+\frac{1}{s}}(K_s(f))\right)^{\frac{1}{2}}}.
\end{align*}
Finally, also using (\ref{vol-Ks}),  
\begin{align*}
&L_{K_s(f)} = \frac{\left( \det \, Cov(1_{K_{s(f)}})\right)^\frac{1}{2\left(n+\frac{1}{s}\right)}}{\left(\text{vol}_{n+\frac{1}{s}} K_s(f)\right)^\frac{1}{n+\frac{1}{s}}}\\
&= \frac{\left(\text{vol}_\frac{1}{s} \left(B_2^\frac{1}{s}\right)^{n+\frac{1}{s}}s^{\frac{n}{2s}+\frac{n(n+2)}{2}}\left( \int_{\mathbb{R}^n} f^{2s+1} dx \right)^\frac{1}{s} \left( \int_{\mathbb{R}^n} f dx \right)^n \det (Cov(f))_{i,j}\right)^\frac{1}{2\left( n+\frac{1}{s} \right)}}{\left(\frac{1}{s}+2\right)^{\frac{1}{2s\left(n+\frac{1}{s}\right)}}\left(\text{vol}_{n+\frac{1}{s}}(K_s(f))\right)^{\frac{1}{n+\frac{1}{s}}+\frac{1}{2}}}\\
&= \left(\frac{\left( \int_{\mathbb{R}^n} f^{2s+1} dx \right)^\frac{1}{2s} (\det (Cov(f))_{i,j}^\frac{1}{2}}{2^\frac{1}{2s}\left(1+\frac{1}{2s}\right)^{\frac{1}{2s}}\text{vol}_\frac{1}{s} \left(B_2^\frac{1}{s}\right)\left( \int_{\mathbb{R}^n} f dx \right)^\frac{1}{2s}\left( \int_{\mathbb{R}^n} f dx \right)}\right)^\frac{1}{n+\frac{1}{s}}\\
&= L_f^\frac{n}{n+\frac{1}{s}} \cdot\left(\frac{\left( \int_{\mathbb{R}^n} f^{2s+1} dx \right)^\frac{1}{2s}}{2^\frac{1}{2s}\left(1+\frac{1}{2s}\right)^{\frac{1}{2s}} \|f\|_\infty \left( \int_{\mathbb{R}^n} f dx \right)^\frac{1}{2s}\text{vol}_\frac{1}{s}\left(B_2^\frac{1}{s}\right)}\right)^\frac{1}{n+\frac{1}{s}}
\end{align*}
\end{proof}

\small

\par
\noindent 
Stephanie Egler\\
{\small Department of Mathematics \\
{\small Case Western Reserve University \\
{\small Cleveland, Ohio 44106, U. S. A. \\
{\small \tt sae58@case.edu}\\ \\
\noindent
\and 
Elisabeth Werner\\
{\small Department of Mathematics \ \ \ \ \ \ \ \ \ \ \ \ \ \ \ \ \ \ \ Universit\'{e} de Lille 1}\\
{\small Case Western Reserve University \ \ \ \ \ \ \ \ \ \ \ \ \ UFR de Math\'{e}matique }\\
{\small Cleveland, Ohio 44106, U. S. A. \ \ \ \ \ \ \ \ \ \ \ \ \ \ \ 59655 Villeneuve d'Ascq, France}\\
{\small \tt elisabeth.werner@case.edu}\\ \\

\end{document}